\newcommand{\RNum}[1]{\uppercase\expandafter{\romannumeral #1\relax}}
\numberwithin{equation}{section}
\def\fsw{\sf{FSW}}
\newcommand{\Poincare}{\text{Poincar\'e }}
\titleformat{\section}[runin]{\bfseries}{\thesection.}{3pt}{}[.]
\begin{document}

\title[Symplectic mapping class groups of blowups of tori]%
{Symplectic mapping class groups of blowups of tori}

\author{Gleb Smirnov}




\begin{abstract}
Let $\omega$ be a K{\"a}hler form on the real 
$4$-torus $T^4$. Suppose that $\omega$ satisfies an irrationality condition which can be achieved by an arbitrarily small perturbation of $\omega$. This note shows that the smoothly trivial symplectic mapping class group of the one-point symplectic blowup of $(T^4,\omega)$ is infinitely generated. 
\end{abstract}

\maketitle

\setcounter{section}{0}
\section{Main result}\label{main}
Let $(X,\omega)$ be a closed symplectic $4$-manifold. The smoothly trivial symplectic mapping class group 
$K(X,\omega)$ of $(X,\omega)$ is defined by:
\[
K(X,\omega) = \text{ker}\,\left[ \pi_0 \symp(X,\omega)  \to \pi_0 \diff(X)  \right].
\]
It is known that $K(X,\omega)$ is not finitely generated for some $K3$ surfaces. See \cite{Sher-Smith, Sm-2}. The aim of this note is to explore new examples of $4$-manifolds with $K(X,\omega)$ infinitely generated.
\medskip%

Let us consider $\rr^{4}$ endowed with a constant coefficient symplectic form $\omega_0$. Let $X = \rr^4/L$ be a real $4$-torus, $L$ being a lattice in $\rr^4$. Then $\omega_0$ descends to a symplectic form on $X$, which we denote by $\omega$. Set $\kappa = [\omega] \in H^2(X;\rr)$. We shall say that $\kappa$ is non-resonant if the following holds:
\begin{equation}\label{k-irrational}
\langle \kappa, a \rangle \neq 0\quad \text{for each $a \in H^2(X;\zz),\ a \neq 0,$}
\end{equation}
where $\langle \phantom{\cdot}, \phantom{\cdot} \rangle$ denotes the cup product pairing. Choose a translation-invariant $\omega$-compatible 
complex structure $J$ on $X$, so that $(X,J,\omega)$ is a K{\"a}hler manifold. Let $\pi \colon \tilde{X} \to X$ be the blowup of $X$ at a point. Choose $\lambda > 0$ small enough so that $\tilde{X}$ admits a K{\"a}hler form $\omega_{\lambda}$ in the cohomology class $\pi^{*}[\omega] - \lambda\,e$, where $e$ is the dual to the exceptional curve. (In fact, if \eqref{k-irrational} holds, then, for a suitably chosen $J$, the class $\pi^{*} [\omega] - \lambda\,e$ admits a K{\"a}hler representative for all $0 < \lambda < \sqrt{\langle \kappa, \kappa \rangle}$. See \cite{Lat-McD-Schl,Ent-Verbit-1}.) What we shall prove is:
\begin{theorem}\label{t:A}
If $\kappa$ is non-resonant, then 
$K(\tilde{X},\tilde{\omega}_{\lambda})$ is infinitely generated.
\end{theorem}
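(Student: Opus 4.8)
The plan is to detect infinitely many independent elements of $K(\tilde X,\tilde\omega_\lambda)$ via a Floer-theoretic invariant, imitating the Seidel/Sheridan–Smith strategy that produced the $K3$ examples in \cite{Sher-Smith, Sm-2}. The heart of the matter is that the one-point blowup of a non-resonant Kähler torus carries a rich supply of symplectically embedded Lagrangian spheres: these appear as vanishing cycles of nodal degenerations. More precisely, I would first produce, for each sufficiently large integer $n$, a Lagrangian sphere $L_n\subset\tilde X$ in a homology class of the form $\pi^*\alpha_n - e$ (or a suitable multiple), where $\alpha_n\in H_2(X;\zz)$ ranges over an infinite family; the non-resonance condition \eqref{k-irrational} is exactly what guarantees that the relevant classes have the right symplectic area $\langle\kappa,\alpha_n\rangle - \lambda > 0$ and that one can degenerate $(\tilde X,\tilde\omega_\lambda)$ so that each such class is represented by a Lagrangian sphere. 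Here I would lean on the Kähler geometry: a generic translation-invariant $J$ on $X$ makes many curve classes on $X$ nonrepresentable, but after blowing up and varying the complex structure in its deformation class one can find nodal degenerations whose vanishing cycles realize the $L_n$.

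Next I would consider the squared Dehn twists $\tau_{L_n}^2$. Each $\tau_{L_n}$ is a symplectomorphism of $(\tilde X,\tilde\omega_\lambda)$; because $\dim_\rr L_n = 2$, the square $\tau_{L_n}^2$ is smoothly isotopic to the identity (the smooth monodromy of an $A_1$ singularity in real dimension $4$ has order dividing $2$ after stabilization — more carefully, one uses that $\tau_{L}^2$ is smoothly trivial on a $4$-manifold, a standard fact going back to Seidel). Hence $[\tau_{L_n}^2]\in K(\tilde X,\tilde\omega_\lambda)$. It remains to show these classes, or an infinite subset of them, are independent in the abelianization, which already forces $K$ to be infinitely generated.

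To prove independence I would use a Floer-homological invariant sensitive to Lagrangian spheres — concretely, the action of the symplectic mapping class group on the set of isomorphism classes of objects of the Fukaya category, or the more elementary count: for two Lagrangian spheres $L, L'$, the twist $\tau_L^2$ acts nontrivially on $HF(L',\cdot)$ unless $[L]\cdot[L']$ vanishes appropriately. The key algebraic input is Seidel's long exact sequence, which computes $HF(\tau_L^2 K, K)$ in terms of $HF(L,K)$ and shows $\tau_L^2$ is not Hamiltonian isotopic to the identity when $HF(L,K)\neq 0$ for a suitable test object $K$. Choosing the homology classes $[L_n]$ so that the intersection form restricted to their span is, say, a growing direct sum of hyperbolic or $A_1$ blocks, one arranges that no finite product $\prod \tau_{L_i}^{2k_i}$ with not-all-zero exponents can be Hamiltonian (indeed symplectically isotopic rel the smooth class) to the identity — this is the step where one really uses that $H_2(X;\zz)$ has rank $6$ and an indefinite form, giving infinitely many disjoint "hyperbolic planes worth" of vanishing cycles after blowup.

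The main obstacle, and the place where the argument needs genuine work rather than citation, is the first step: constructing the infinite family of Lagrangian spheres $L_n$ with controlled homology classes inside the \emph{fixed} symplectic manifold $(\tilde X,\tilde\omega_\lambda)$, together with enough control on their Floer pairings to run the independence argument. One expects to obtain them by choosing nodal degenerations of the blown-up torus in which a curve in class $\pi^*\alpha_n - e$ collapses; the non-resonance hypothesis \eqref{k-irrational} ensures (via the cited Kähler-cone results \cite{Lat-McD-Schl,Ent-Verbit-1}) that $\tilde\omega_\lambda$ lies in the closure of a wall crossed by such a degeneration for every $n$ with $\langle\kappa,\alpha_n\rangle$ slightly exceeding $\lambda$, of which there are infinitely many precisely because $\kappa$ is irrational. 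Packaging this — showing the vanishing cycles can be taken simultaneously Lagrangian for one $\tilde\omega_\lambda$, and computing/bounding their mutual Floer homology — is the crux; everything after it is Seidel's machinery applied verbatim.
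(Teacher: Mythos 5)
Your strategy cannot get off the ground, because the manifold $\tilde{X} = T^4 \# \overline{\cc P^2}$ contains no Lagrangian spheres at all. Indeed, the universal cover of $\tilde{X}$ is $\rr^4$ blown up at the lattice points, whose second homology is freely generated by the exceptional classes, each of which maps to $e$ under the covering projection; hence the image of $\pi_2(\tilde{X}) \to H_2(\tilde{X};\zz)$ is exactly $\zz\,e$, so any embedded sphere in $\tilde{X}$ has homology class $k\,e$ and self-intersection $-k^2$, which is never $-2$. (This is precisely where the blown-up torus differs from the $K3$ surfaces of \cite{Sher-Smith, Sm-2}: the torus is aspherical, so there are no vanishing $(-2)$-cycles to twist along.) Your proposal also confuses the relevant numerology: a Lagrangian submanifold must have symplectic area exactly zero, whereas the classes the non-resonance condition produces are $\delta - e$ with $\langle \delta,\delta\rangle = 0$, hence of square $-1$, and with $\langle \kappa - \lambda e, \delta - e\rangle = \langle\kappa,\delta\rangle - \lambda \leq 0$; these are represented (on the walls of the moduli space) by \emph{elliptic curves} of self-intersection $-1$, not by spheres, and they are never Lagrangian for $\tilde\omega_\lambda$ except in the boundary case. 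So both the construction of the $L_n$ and the subsequent Dehn-twist/Floer machinery are vacuous here.

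The paper takes an entirely different route, which is forced by the absence of Lagrangian spheres. It builds the fine moduli space $\calm_{\kappa}$ of marked $\kappa$-polarized complex $2$-tori (contractible, by Shioda's Torelli theorem), blows up the universal family along its zero section, and restricts to the locus $\calm_{\kappa,\lambda}$ where $\kappa - \lambda e$ is K\"ahler; Moser's argument turns loops in $\calm_{\kappa,\lambda}$ into elements of $K(\tilde X,\tilde\omega_\lambda)$. Non-resonance guarantees infinitely many walls $H_\delta$ (one for each indivisible $\delta$ with $\delta^2=0$ and $0<\langle\kappa,\delta\rangle\leq\lambda$), and small loops around distinct walls are separated by a family Seiberg--Witten invariant $q_{\delta-e} = Q^+_{\delta-e}+Q^-_{\delta-e}$ in the style of Kronheimer, with negative inflation (Buse) used to kill the contribution of $\pi_1\diff(\tilde X)$. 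If you want to salvage a Floer-theoretic proof, you would need an invariant attached to the $(-1)$-elliptic curves rather than to Lagrangian spheres; the author explicitly remarks that no such mirror-symmetry argument is currently visible.
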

\smallskip%
The proof is similar to the proof of Theorem 1 in \cite{Sm-2}, but relies on different algebraic geometry from that of $K3$ surfaces; there are two steps:
\medskip%

\begin{enumerate}[leftmargin = 1cm, label=\Roman*.]
\item Consider the moduli space $\calm_{\kappa}$ of marked ($\kappa$-)polarized complex tori. It has the following properties: 
(a) $\calm_{\kappa}$ is a fine moduli space in the sense that there is a holomorphic fiber bundle $p \colon \calx \to \calm_{\kappa}$ 
whose fiber $X_t$ over $t \in \calm_{\kappa}$ is a complex 2-torus; (b) $\calm_{\kappa}$ is a contractible space; (c) each $X_t$ may be endowed with a K{\"a}hler form $\Omega_t$ such that $[\Omega_t] = \kappa$; and (d) $\calx$ has a canonical holomorphic cross-section $Z$.
\smallskip%

\noindent
Let $\tilde{\calx}$ denote the blowup of $\calx$ along $Z$. Composing the contraction of $Z$ with $p$, we obtain a fiber 
bundle $\tilde{p} \colon \tilde{\calx} \to \calm_{\kappa}$ whose fiber $\tilde{X}_t$
over $t \in \calm_{\kappa}$ is the one-point blowup of $X_t$.
\smallskip%

\noindent
Let $\calm_{\kappa,\lambda}$ be the subset of $\calm_{\kappa}$ defined as follows: $t \in \calm_{\kappa,\lambda}$ if $\tilde{X}_t$ carries a K{\"a}hler form $\tilde{\Omega}_t$ in the class
$[\tilde{\Omega}_t] = \kappa - \lambda\, e$. Let us choose a smooth-varying family of such K{\"a}hler forms $\tilde{\Omega}_t$, $t \in \calm_{\kappa,\lambda}$.
\smallskip%

\noindent
Consider the restriction $\tilde{p} \colon \tilde{\calx} \to \calm_{\kappa,\lambda}$. Identify 
$(\tilde{X}, \omega_{\lambda})$ with $(\tilde{X}_{t_0}, \tilde{\Omega}_{t_0})$ for some $t_0 \in \calm_{\kappa,\lambda}$. 
Since $\calm_{\kappa}$ is contractible, it follows that $\tilde{p} \colon \tilde{\calx} \to \calm_{\kappa}$ is $C^{\infty}$-trivial. Then $\tilde{p} \colon \tilde{\calx} \to \calm_{\kappa,\lambda}$ is also $C^{\infty}$-trivial. 
Thus we may view $\tilde{\Omega}_t$ as a family of symplectic forms on $(\tilde{X}, \omega_{\lambda})$ parametrized by $\calm_{\kappa,\lambda}$. Moser's theorem then gives a monodromy map
\[
\pi_1(\calm_{\kappa,\lambda}, t_0) \to K(\tilde{X}, \omega_{\lambda}).
\]
\item Let $\Delta_{\kappa,\lambda} \subset H^2(X;\zz)$ be the set of all classes $\delta$ that are indivisible (i.e., not a multiple) and that satisfy the following conditions:
\[
\langle \delta, \delta \rangle = 0,\quad 0 < \langle \kappa, \delta \rangle \leq \lambda.
\]
It will turn out that $\Delta_{\kappa,\lambda}$ is infinite, provided $\kappa$ is non-resonant. Following Kronheimer \cite{K}, we construct a homomorphism 
\[
q \colon K(\tilde{X},\tilde{\omega}_{\lambda}) \to \prod_{\delta \in \Delta_{\kappa,\lambda}} \zz_2,
\]
and then show that the composite homomorphism
\[
\pi_1(\calm_{\kappa,\lambda}, t_0) \xrightarrow{\text{\ Moser\ }} K(\tilde{X},\tilde{\omega}_{\lambda}) \xrightarrow{q} \prod_{\delta \in \Delta_{\kappa,\lambda}} \zz_2
\]
surjects onto $\bigoplus_{\delta \in \Delta_{\kappa,\lambda}} \zz_2 
\subset \prod_{\delta \in \Delta_{\kappa,\lambda}} \zz_2$.
\end{enumerate} 

One would hope to find a mirror symmetry proof of Theorem \ref{t:A} in the spirit of 
Sheridan-Smith \cite{Sher-Smith} and Hacking-Keating \cite{Hack-Keat}. The present note does not contain a clear hint towards such a proof.
\smallskip%

A recent study by Auroux and Smith \cite{Aur-Smith} provides the first, albeit implicit, information about $\pi_0 \symp(\tilde{X},\tilde{\omega}_{\lambda})$. Let us state their result as follows:
\begin{theorem}[Auroux-Smith, \cite{Aur-Smith}]
Let $C$ be a Riemann surface of genus $2$, and let it be equipped with a K{\"a}hler form $\tau$. Let $(\tilde{X},\tilde{\omega}_{\lambda})$ be as above. Let $Z$ be the product $(\Sigma \times \tilde{X}, \tau \oplus \tilde{\omega}_{\lambda})$. 
Choose a basis $\left\{ \eta_i \right\}$ in $H^1(Z;\zz)$, and choose $\epsilon > 0$ small so that $Z$ admits a K{\"a}hler form $\Omega_{\epsilon}$ in the cohomology class
\[
[\Omega_{\epsilon}] = [\tau \oplus \tilde{\omega}_{\lambda}] + \epsilon \sum_{i,j} c_{ij} \eta_t \wedge \eta_j\quad \text{for some $c_{ij} \in (0,1)$.}
\]
Suppose that $c_{ij}$ are independent over $\qq$; then there exists a surjective homomorphism of 
$K(Z,\Omega_{\epsilon})$ onto a free group of rank $N = N(\epsilon)$ which tends to infinity as $\epsilon \to 0$.
\end{theorem}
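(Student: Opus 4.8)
The plan is to use that $Z$ is six-dimensional and fibres over the genus-two surface, $Z = \Sigma \times \tilde X \to \Sigma$, with fibre the blown-up torus $(\tilde X, \tilde\omega_\lambda)$ of Theorem~\ref{t:A}. Two dimension-six phenomena drive the argument. First, a Dehn twist along a Lagrangian $3$-sphere (or a fixed power of it) should be \emph{smoothly} isotopic to the identity, since the relevant clutching/framing obstruction degenerates in this dimension; such a twist then defines a class in $K(Z,\Omega_\epsilon)$. Second, unlike the torsion picture in four dimensions underlying Theorem~\ref{t:A}, the Floer theory of these twists produces elements of infinite order. The goal is therefore to manufacture $N(\epsilon)$ Lagrangian $3$-spheres whose twists are \emph{jointly free} and are separated by a single homomorphism onto a free group.

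First I would build the spheres from the data already assembled for Theorem~\ref{t:A}. Each indivisible class $\delta \in \Delta_{\kappa,\lambda}$ with $\langle\delta,\delta\rangle = 0$ and $0 < \langle\kappa,\delta\rangle \le \lambda$ is realised in $\tilde X$ by a Lagrangian torus $T_\delta$; crossing $T_\delta$ with a suitable embedded arc or circle in $\Sigma$ and closing it up by a matching-cycle (Lefschetz) construction over $\Sigma$ should yield a Lagrangian $3$-sphere $L_\delta \subset (Z,\Omega_\epsilon)$, the genus-two base providing the topology needed to close the matching cycle. The $\Omega_\epsilon$-area of $L_\delta$ is governed by $\epsilon$ and by the periods $c_{ij}$; the hypothesis that the $c_{ij}$ are $\qq$-independent guarantees that distinct $\delta$ give spheres of pairwise distinct, irrational areas, hence rigid and Floer-theoretically distinguishable objects. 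A counting argument parallel to the one showing $\Delta_{\kappa,\lambda}$ infinite then selects $N(\epsilon)$ admissible classes, the window of admissible areas widening as the perturbation shrinks, so that $N(\epsilon) \to \infty$ as $\epsilon \to 0$.

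Next I would confirm that each $\tau_{L_\delta}$ (or a fixed power thereof) is smoothly trivial, hence lies in $K(Z,\Omega_\epsilon)$; this is the dimension-six framing computation noted above and should be uniform in $\delta$. The core of the proof is the detecting map $q_Z \colon K(Z,\Omega_\epsilon) \to F_{N(\epsilon)}$. I would let $\pi_0 \symp(Z,\Omega_\epsilon)$ act on the derived Fukaya category, or more concretely on the Floer cohomology of a fixed family of probe Lagrangians, under which $\tau_{L_\delta}$ acts by the spherical twist in $L_\delta$. The irrationality of the areas makes the relevant Floer complexes explicit, and a ping-pong argument on the images of the probe objects should show that the $N(\epsilon)$ spherical twists satisfy no relations; the subgroup they generate is then free of rank $N(\epsilon)$, and the categorical action furnishes the desired surjection.

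The main obstacle is precisely this freeness. Proving that the $N(\epsilon)$ twists generate a free group, rather than a group with hidden relations, requires controlling the full composite action on the Fukaya category and establishing genuine ping-pong dynamics; this is where the $\qq$-independence of the $c_{ij}$ and the resulting area-rigidity must be exploited most carefully. A secondary difficulty is the matching-cycle construction of the first step: one must check that the Lagrangian tori of Theorem~\ref{t:A} spread coherently over $\Sigma$ into embedded Lagrangian spheres for the perturbed form $\Omega_\epsilon$, and that the count $N(\epsilon)$ genuinely diverges. By contrast, the smooth-triviality step should be routine once the dimension-six framing obstruction has been pinned down.
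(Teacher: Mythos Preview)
The paper does not contain a proof of this theorem. It is quoted as a result of Auroux and Smith \cite{Aur-Smith} and stated without argument, purely to situate the paper's own Theorem~\ref{t:A} in the literature. There is therefore no proof in this paper against which your proposal can be compared.

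That said, a few remarks on the proposal itself. Your first step already leaves the territory of this paper: nowhere does the paper produce Lagrangian tori $T_\delta \subset \tilde X$ associated to classes $\delta \in \Delta_{\kappa,\lambda}$; the classes $\delta$ enter only through Seiberg--Witten wall-crossing and the holomorphic elliptic curves of Lemma~\ref{-1-torus}, not through Lagrangian geometry. So the input you want to cross with arcs in $\Sigma$ is not supplied here, and the matching-cycle construction you sketch would have to be built from scratch. More substantively, the detecting homomorphism in the Auroux--Smith theorem lands in a \emph{free} group, not in a $\zz_2$-vector space, and your proposal correctly identifies this as the crux: establishing freeness of a collection of spherical twists via ping-pong on a Fukaya category is a genuinely hard step that your outline does not resolve. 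The actual proof in \cite{Aur-Smith} proceeds via homological mirror symmetry and autoequivalence groups of derived categories on the mirror side, which is rather different in flavour from the direct Floer-theoretic ping-pong you sketch; but since none of that appears in the present paper, a detailed comparison is out of scope here.
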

\smallskip%

There are other recent works on the symplectic mapping class groups in four dimensions: In \cite{Anj-Li-TJLi-Pins}, Anjos, J. Li, T.-J. Li, and Pinsonnault studied the change of the symplectic mapping class group (and of the higher homotopies of the symplectomorphism group) under deformations of the symplectic form and obtained specific stability results. In \cite{Li-TJLi-Wu}, J. Li, T.-J. Li, and Wu studied the symplectic mapping class groups of manifolds with $c_1(X) \cdot \omega > 0$. In particular, they found manifolds with trivial symplectic mapping class groups and also manifolds whose symplectic mapping class group is the pure braid group of $S^2$. In \cite{Bus-Li}, Buse and J. Li investigated the symplectic mapping class groups of a one-point blow-up of irrational ruled surfaces; they found new non-trivial symplectic mapping classes that are trivial smoothly. The works above focus on the symplectomorphism groups of rational ruled surfaces and their blow-ups. The methods of those works remain highly dependent on the rationality assumption, and it is unclear if they apply to the problem considered in the present paper.

\statebf Acknowledgements. 
\ An alternative proof of Lemma \ref{q-to-K} was found in a conversation with Jianfeng Lin; one that relies on the family blowup formula for SW-invariants rather than on negative inflation. I thank him for that and many other discussions. I also thank the referee for their helpful remarks. This work was supported by an SNSF Ambizione fellowship.

\section{Negative inflation}\label{negativ} The notation are as in \S\,\ref{main}. 
Let $S_{\kappa, \lambda}$ be the space of those symplectic forms on $\tilde{X}$ 
which can be joined with 
$\tilde{\omega}_{\lambda}$ 
through a path of cohomologous symplectic forms. Following Kronheimer \cite{K} and McDuff \cite{McD-1}, we consider 
the following fibration:
\begin{equation}\label{mcduff-fibr}
\symp(\tilde{X},\omega_{\lambda}) \to \diff(\tilde{X}) \xrightarrow{\psi \to (\psi^{-1})^{*} \tilde{\omega}_{\lambda} } S_{\kappa,\lambda},
\end{equation}
and its associated long exact sequence:
\[
\cdots \to \pi_1 \diff(\tilde{X}) \to 
\pi_1 (S_{\kappa,\lambda}) \to K(\tilde{X},\omega_{\lambda}).
\]
The following statement is due to McDuff and can be found in \cite{McD-1}, see Lemma 2.1, Lemma 2.2, and Corollary 2.3.
\begin{theorem}[McDuff, \cite{McD-1}]\label{mcduff-alpha}
There are inclusions 
$\alpha_{\mu} \colon S_{\kappa,\lambda} \to S_{\kappa,\,\lambda - \mu}$ 
for $0 \leq \mu < \lambda$ such that $\alpha_0$ is the identity, 
$\alpha_{\mu_1 + \mu_2}$ is homotopic to $\alpha_{\mu_2} \circ \alpha_{\mu_1}$ whenever all three are defined, and such that the following diagram is 
commutative:
\[
\begin{tikzcd}
\pi_1 \diff(\tilde{X}) \arrow{d}{\text{\normalfont{identity}}} \arrow{r}{} & \pi_1 (S_{\kappa,\lambda}) \arrow{d}{\alpha_{\mu *}}\\
\pi_1 \diff(\tilde{X}) \arrow{r}{} & \pi_1 (S_{\kappa,\,\lambda - \mu})\,,
\end{tikzcd}
\]
where the horizontal maps are as in \eqref{mcduff-fibr}.
\end{theorem}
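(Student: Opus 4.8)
The plan is to realise every $\alpha_\mu$ by \emph{negative inflation} along the exceptional sphere, following McDuff's method. On $\tilde X$ the class $e$ is a sphere class ($\langle e,e\rangle = -1$, $\langle c_1(\tilde X),e\rangle = 1$), and $\langle \kappa - \lambda e, e\rangle = \lambda$, so the exceptional curve has $\tilde\omega_\lambda$-area $\lambda$. The first point to establish is that \emph{every} $\omega \in S_{\kappa,\lambda}$ carries an embedded $\omega$-symplectic sphere in class $e$ together with an $\omega$-compatible almost complex structure making it holomorphic: for $\omega = \tilde\omega_\lambda$ this is the exceptional curve of the K{\"a}hler blowup; for arbitrary $\omega \in S_{\kappa,\lambda}$ one transports it by a Moser isotopy along a path of cohomologous symplectic forms from $\tilde\omega_\lambda$ to $\omega$ — such an isotopy lies in $\diff_0(\tilde X)$ and so fixes the class $e$ — and then perturbs the transported sphere to be $J$-holomorphic. (Equivalently one works with $J$-holomorphic representatives for all compatible $J$, provided $e$ is $J$-indecomposable; that this holds for the blowup of a torus with $\lambda$ small is where the geometry of tori replaces the corresponding $K3$ input and uses \cite{Lat-McD-Schl, Ent-Verbit-1}.)

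Second, I would apply McDuff's negative-inflation lemma (\cite{McD-1}, Lemmas 2.1--2.2): for $(\omega,J,E)$ as above there is a closed $2$-form $\rho$, Poincar\'e dual to $E$ and supported in an arbitrarily small neighbourhood of it, with $\omega + t\rho$ symplectic for $0 \le t < \lambda$ — the bound being forced by the area $\lambda - t$ of $E$ reaching $0$. As $[\rho] = e$, the form $\omega + \mu\rho$ represents $\kappa - (\lambda - \mu)e$; a direct computation near the exceptional curve shows $\tilde\omega_\lambda + \mu\rho$ is joined to $\tilde\omega_{\lambda - \mu}$ through cohomologous symplectic forms, so $\omega + \mu\rho \in S_{\kappa,\lambda - \mu}$. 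To turn $\omega \mapsto \omega + \mu\rho$ into a genuine continuous map I would pass to the space $\mathcal E_{\kappa,\lambda}$ of triples $(\omega,J,E)$ with $\omega \in S_{\kappa,\lambda}$, $J$ an $\omega$-compatible almost complex structure, and $E$ an embedded $J$-holomorphic sphere in class $e$. One shows the forgetful map $\mathcal E_{\kappa,\lambda} \to S_{\kappa,\lambda}$ is a fibration with weakly contractible fibre; in particular it admits a section $s$ and induces an isomorphism on $\pi_1$. Over $\mathcal E_{\kappa,\lambda}$ the sphere $E$, a tubular neighbourhood built from the $g_J$-exponential map, and the form $\rho = \rho(\omega,J,E)$ vary continuously, so inflation defines an honest map $\hat\alpha_\mu \colon \mathcal E_{\kappa,\lambda} \to S_{\kappa,\lambda - \mu}$; set $\alpha_\mu := \hat\alpha_\mu \circ s$.

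Third, the listed properties are formal consequences. $\alpha_0$ is literally the identity, since $\hat\alpha_0$ is the projection $(\omega,J,E)\mapsto\omega$. For $\alpha_{\mu_1 + \mu_2} \simeq \alpha_{\mu_2}\circ\alpha_{\mu_1}$ one chooses the second inflation's data so that $E$, its neighbourhood, and a compatible $J$ near $E$ are unchanged, whence $(\omega + \mu_1\rho) + \mu_2\rho = \omega + (\mu_1 + \mu_2)\rho$ exactly; any two admissible choices of data are joined by paths of taming forms and give homotopic maps. For the square, $\diff(\tilde X)$ acts on each $S_{\kappa,\lambda'}$ by $\psi\cdot\omega = (\psi^{-1})^*\omega$ and on $\mathcal E_{\kappa,\lambda}$ by pulling back the triple; the horizontal maps are induced on $\pi_1$ by the orbit maps $\psi \mapsto (\psi^{-1})^*\tilde\omega_{\lambda'}$ of \eqref{mcduff-fibr}, which factor through $\pi_1 \mathcal E_{\kappa,\lambda'}$. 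Since the inflation data were chosen naturally, $\hat\alpha_\mu$ is $\diff(\tilde X)$-equivariant and $\alpha_\mu(\tilde\omega_\lambda) \in S_{\kappa,\lambda - \mu}$; running a loop $\psi_t$ in $\diff(\tilde X)$ through equivariance, together with a path in $S_{\kappa,\lambda - \mu}$ from $\alpha_\mu(\tilde\omega_\lambda)$ to $\tilde\omega_{\lambda - \mu}$, gives the homotopy of loops that makes the square commute.

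I expect the crux to be the second step: making negative inflation into a homotopy-coherent, $\diff$-equivariant family. One must check that the space of compatible $J$ admitting an embedded $J$-holomorphic representative of $e$ is well behaved — connected and simply connected, equivalently that $e$ is $J$-indecomposable for all relevant $J$ — which is precisely where the smallness of $\lambda$ and the geometry of the blowup of a torus, rather than of a $K3$, do the work; one must see that the neighbourhood and cutoff defining $\rho$ can be chosen continuously and naturally in $(\omega,J,E)$; and one must confirm the local computation placing $\tilde\omega_\lambda + \mu\rho$ in $S_{\kappa,\lambda - \mu}$. Granting these, the commutative diagram and the long exact sequence of \eqref{mcduff-fibr} follow formally.
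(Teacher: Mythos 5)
Your proposal rests on the same key input as the paper -- negative inflation along an embedded $J$-holomorphic exceptional $(-1)$-sphere, i.e.\ Buse's Theorem~\ref{deflation} -- but you package the homotopy coherence differently, and the difference matters for how much work is left. You try to build $\alpha_\mu$ \emph{directly} on $S_{\kappa,\lambda}$ by decorating each form with a pair $(J,E)$, proving the forgetful map from the space of triples is a fibration with weakly contractible fibres, choosing a section, and arranging the inflation form $\rho(\omega,J,E)$ to vary continuously and $\diff$-equivariantly. You correctly identify this as the crux, but it is exactly the part the paper avoids: following McDuff, one replaces $S_{\kappa,\lambda'}$ by the space $\calj_{\lambda'}$ of almost complex structures tamed by \emph{some} form in $S_{\kappa,\lambda'}$, shows $S_{\kappa,\lambda'}\simeq\calj_{\lambda'}$ via the intermediate space of taming pairs (both projections are fibrations with contractible -- indeed convex -- fibres), and then observes that $\calj_\lambda\subset\calj_{\lambda-\mu}$ is a \emph{literal set inclusion}: for each fixed $J\in\calj_\lambda$ one only needs the pointwise \emph{existence} of an inflated form taming $J$ in the new class, with no continuity in $(\omega,J,E)$, no section, and no naturality of cutoffs required. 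The maps $\alpha_\mu$ and the commuting square then come for free from functoriality of the zig-zag of homotopy equivalences. Your route can presumably be made to work, but the fibration and section claims for $\mathcal{E}_{\kappa,\lambda}$ (uniqueness of $E$ via positivity of intersections, existence for all compatible $J$, continuity of $E$ in $J$ via Gromov compactness) are nontrivial extra obligations that the tamed-$J$ formulation renders unnecessary; both arguments still share the one genuinely geometric input, namely that every relevant $J$ admits an embedded $J$-holomorphic sphere in class $e$.
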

\begin{proof}
Let $\calj_{\lambda - \mu}$ denote the space of almost-complex structures that are tamed by some form in $S_{\kappa,\,\lambda - \mu}$
\footnote{Recall that $\omega$ tames $J$ if $\omega(v,Jv) > 0$ for all $v \neq 0$.}. 
McDuff shows (see \cite[Lemma 2.1]{McD-1}) that $\calj_{\lambda - \mu}$ is homotopy equivalent to $S_{\kappa,\,\lambda - \mu}$. The is proof is as follows: 
Let $\calp_{\kappa,\,\lambda - \mu}$ be the space of pairs
\[
\calp_{\kappa,\,\lambda - \mu} = 
\left\{ 
(\omega,J) \in S_{\kappa,\,\lambda - \mu} \times \calj_{\lambda - \mu}\ |\ 
\text{$\omega$ tames $J$}.
\right\}
\]
Both projections 
$\calp_{\kappa,\,\lambda - \mu} \to S_{\kappa,\,\lambda - \mu}$, 
$\calp_{\kappa,\,\lambda - \mu} \to \calj_{\kappa,\,\lambda - \mu}$ are fibrations 
with contractible fibers, and hence are homotopy equivalences. We will show below that 
$\calj_{\kappa,\,\lambda} \subset \calj_{\kappa,\,\lambda - \mu}$. 
Granted this, $\alpha_{\mu}$ are induced from the inclusion 
$\calj_{\kappa,\,\lambda} \subset \calj_{\kappa,\,\lambda - \mu}$ using 
the homotopy equivalences above. All the claimed properties of $\alpha_{\mu}$ are rather straightforward form the definition. \qed
\end{proof}
\smallskip%

To complete the proof of Theorem \ref{mcduff-alpha}, we need 
the following statement:
\begin{theorem}[Buse, \cite{Bus}]\label{deflation}
Fix a symplectic $4$-manifold $(X,\omega_0)$ and any $\omega_0$-tamed 
almost-complex structure $J$. Suppose that $X$ admits an embedded $J$-holomorphic curve 
$C \subset X$ of self-intersection $(-m)$; then, for each $\mu$ such that 
\[
0 \leq \mu < \dfrac{1}{m} \int_{C} \omega_0,
\]
there are symplectic forms 
$\omega_{\mu}$, all taming $J$, which satisfy 
\[
[\omega_{\mu}] = [\omega_{0}] + \mu [C],
\]
where $[C] \in H^2(X;\rr)$ is \Poincare dual to $C$. 
\end{theorem}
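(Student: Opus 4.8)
The plan is to prove this by \emph{negative inflation} (the sharp form of a technique going back to Biran, Li--Usher and McDuff): I will produce, supported in a small tubular neighbourhood of $C$, a closed $2$-form $\rho$ that is \Poincare dual to $C$ and whose negativity on $J$-complex lines is controlled by exactly the right constant, and then take $\omega_\mu = \omega_0 + \mu\rho$. Throughout, write $T := \tfrac1m\int_C\omega_0$, so that the hypothesis on $\mu$ reads $0 \le \mu < T$ and $\int_C\omega_0 = mT$.

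As a preparatory step I would put $\omega_0$ and $J$ into normal form along $C$: since $C$ is $\omega_0$-symplectic (being $J$-holomorphic and $\omega_0$-tame), after a deformation of $\omega_0$ supported in a thin tube around $C$, within the class $[\omega_0]$ and keeping $J$ tame (the needed correction term is an exact $2$-form), one may assume that along $C$ the $J$-invariant normal line bundle $N$ ($\deg N = C\cdot C = -m$) is $\omega_0$-orthogonal to $TC$, and then choose a tubular neighbourhood identification $U \cong D_{\epsilon}(N)$ under which, along the zero section, $\omega_0$ is block diagonal with respect to $TC\oplus N$ and $J$ is standard. Next, fix a Hermitian connection on $N$ whose curvature represents the Euler class by the specific closed $2$-form $\mathfrak e := -\tfrac1T\,\omega_0|_C$ on $C$ --- legitimate, since $\int_C(-\tfrac1T\omega_0) = -m = \deg N$ --- let $\psi$ be the associated global angular $1$-form on $N$ minus its zero section ($d\psi = -\pi^*\mathfrak e$, fibre integral $1$), and let $\chi(|z|)$ be a non-decreasing cut-off equal to $0$ near the zero section and to $1$ outside $D_{\epsilon}(N)$. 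Set
\[
\rho \;:=\; \pi^*\mathfrak e \,+\, d\bigl(\chi(|z|)\,\psi\bigr),
\]
extended by zero over $X\setminus U$. Then $\rho$ is closed, supported in $\overline U$, restricts to $\mathfrak e$ on $C$ (so $\int_C\rho = -m$, as forced by $C\cdot C$), and has fibre integral $1$; hence $[\rho]$ is the Thom class of $N$, i.e. $[\rho] = \mathrm{PD}[C]$ in $H^2(X;\rr)$, and $[\omega_0 + \mu\rho] = [\omega_0] + \mu[C]$.

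The heart of the matter is the estimate $\rho(v,Jv) \ge -\tfrac1T\,\omega_0(v,Jv)$, valid off $U$ and valid on $U$ up to an error of order $\epsilon$. Expanding,
\[
\rho + \tfrac1T\omega_0 \;=\; \tfrac1T\bigl(\omega_0 - \pi^*(\omega_0|_C)\bigr) \,+\, \tfrac{\chi}{T}\,\pi^*(\omega_0|_C) \,+\, \chi'\,d|z|\wedge\psi ,
\]
and one checks, using the normal form, that over a point of $C$ and on the complex line spanned by $w = w_T + w_N$ the right-hand side reduces to $\tfrac1T\,\omega_0(w_N,Jw_N) \ge 0$, with equality only when $w_N=0$ (this is precisely where the choice $\mathfrak e = -\tfrac1T\omega_0|_C$ and the block form are used); the middle term is manifestly $\ge 0$ on $J$-complex lines, while $\chi'\,d|z|\wedge\psi$ is the non-negative fibre area form (as $\chi'\ge 0$) plus a term of size $O(\epsilon)$ relative to $\omega_0$. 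Thus $\rho+\tfrac1T\omega_0 \ge -c\epsilon\,\omega_0(\cdot,J\cdot)$ on $U$ and equals $\tfrac1T\omega_0$ off $U$. Given $\mu\in[0,T)$, shrink $\epsilon$ so that $\tfrac{\mu}{T} + c\epsilon\mu < 1$; then for all $v\neq 0$,
\[
(\omega_0+\mu\rho)(v,Jv) \;=\; \bigl(1-\tfrac{\mu}{T}\bigr)\omega_0(v,Jv) \,+\, \mu\bigl(\rho+\tfrac1T\omega_0\bigr)(v,Jv) \;\ge\; \bigl(1-\tfrac{\mu}{T}-c\epsilon\mu\bigr)\,\omega_0(v,Jv) \;>\;0,
\]
so $\omega_\mu := \omega_0+\mu\rho$ is closed and tames $J$, hence is symplectic, with $[\omega_\mu]=[\omega_0]+\mu[C]$; on any compact subinterval of $[0,T)$ a single $\epsilon$ works, so the forms can moreover be chosen to vary smoothly in $\mu$.

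The main obstacle is the taming estimate together with its reliance on the normal form: $\rho$ is genuinely negative in the directions tangent to $C$ --- it has to be, as $\int_C\rho = -m<0$ --- and the entire role of the sharp constant $T = \tfrac1m\int_C\omega_0$ is that $\rho+\tfrac1T\omega_0$ becomes non-negative on $J$-complex lines \emph{exactly} along the zero section, degenerating only on the tangent line $T_pC$. Making this rigorous forces the tubular neighbourhood model to bring both $\omega_0$ and $J$ into standard form along $C$ --- in particular to annihilate the $\omega_0$-pairing between $TC$ and the normal fibres --- which is the one step that genuinely needs care; the rest is a routine curvature-and-cut-off computation, and integrability of $J$ is nowhere used.
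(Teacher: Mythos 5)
The paper does not actually prove this statement --- it cites \cite[\S 2]{Bus} --- so the comparison is with the known proof of negative inflation (Buse, following Li--Usher and McDuff). Your overall strategy is exactly that proof: build a Thom form $\rho$ for the normal bundle with Euler representative $\mathfrak e=-\tfrac1T\,\omega_0|_C$, and tame $\omega_0+\mu\rho$. The algebra of the decomposition of $\rho+\tfrac1T\omega_0$ is correct, and the first two terms are handled correctly up to genuinely $O(\epsilon)$ errors. But there is a real gap in your treatment of the third term $\chi'\,d|z|\wedge\psi$, and it sits precisely at the point that makes the sharp constant $T=\tfrac1m\int_C\omega_0$ nontrivial. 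Since $\deg N=-m\neq0$, the global angular form is $\psi=\tfrac1{2\pi}(d\theta+\pi^*A)$ with a connection term $A$ of size $O(1)$ on the base; hence $\chi'\,d|z|\wedge\psi$ is \emph{not} ``the fibre area form plus $O(\epsilon)$''. On a complex line spanned by $w=w_T+w_N$ it contributes a mixed term of size $\chi'(|z|)\,|w_N|\,|w_T|$. With a cut-off over an annulus of width $\sim\epsilon$ one has $\chi'\sim1/\epsilon$, and after optimally absorbing this cross term into the positive fibre part $\tfrac{\chi'}{2\pi|z|}|w_N|^2$ (Cauchy--Schwarz) the leftover is $\sim\chi'(|z|)\,|z|\cdot|w_T|^2\sim|w_T|^2$ with an $\epsilon$-\emph{independent} constant. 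So the best you get from your construction is $\rho+\tfrac1T\omega_0\ge -C\,\omega_0(\cdot,J\cdot)$ with $C=O(1)$, which only yields taming for $\mu<T/(1+CT)$, strictly short of the claimed range $\mu<T$. Nor can the leftover be absorbed into $\tfrac{\chi}{T}\pi^*(\omega_0|_C)$, since $\chi$ vanishes at the inner edge of the annulus where $\chi'$ is already large.

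The missing idea is the choice of radial profile: one must take $\chi$ with $|z|\,\chi'(|z|)\le\delta$ pointwise, which (because $\int\chi'\,d|z|=1$) forces the support of $\chi'$ to be a conformally long annulus $\{\epsilon e^{-1/\delta}\le|z|\le\epsilon\}$ rather than one of width $\epsilon$. With that logarithmic cut-off the leftover negative term is $O(\delta)\,\omega_0(\cdot,J\cdot)$, and letting $\delta\to0$ (for each fixed $\mu<T$) recovers the sharp bound. This ``long neck'' profile is the heart of Buse's and Li--Usher's arguments; without it your proof establishes a correct but strictly weaker statement. The preparatory normal-form step (making $TC$ and the $J$-invariant normal bundle $\omega_0$-orthogonal along $C$ by an exact, tameness-preserving perturbation) is fine as sketched, and the cohomological identifications are all correct.
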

\begin{proof}
The proof is found in \cite[\S\,2]{Bus}. \qed
\end{proof}
\begin{lemma}
$\calj_{\kappa,\,\lambda} \subset \calj_{\kappa,\,\lambda - \mu}$ for all 
$0 \leq \mu < \lambda$.
\end{lemma}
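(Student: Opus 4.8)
The plan is to deflate $\omega$ along the exceptional sphere, feeding it into Buse's Theorem~\ref{deflation} (this is the argument of \cite[\S\,2]{McD-1}, transplanted from rational and ruled surfaces to the torus). Let $J \in \calj_{\kappa,\lambda}$ and fix $\omega \in S_{\kappa,\lambda}$ taming $J$, so that $[\omega] = \pi^{*}\kappa - \lambda e$.

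The first — and crucial — step is to exhibit an embedded $J$-holomorphic sphere $C \subset \tilde X$ in the exceptional class $E = \mathrm{PD}(e) \in H_2(\tilde X;\zz)$, so that $C\cdot C = -1$. I would read this off gauge theory. Since $\tilde X$ carries a K{\"a}hler form and $b^{+}(\tilde X) = b^{+}(T^4) = 3 \ge 2$, Taubes' theorem gives $\mathrm{SW}_{\tilde X}(\pm K_{\tilde X}) = \pm 1$; and $K_{\tilde X} = \pi^{*}K_{T^4} + e = e$ since $K_{T^4} = 0$. By Taubes' identity $\mathrm{SW} = \mathrm{Gr}$, the Gromov invariant of the class $E$ equals $\mathrm{SW}_{\tilde X}$ of the $\mathrm{Spin}^{c}$-structure with $c_1 = 2e - K_{\tilde X} = e$, hence is $\pm 1 \ne 0$. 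So for generic $\omega$-tamed $J$ there is an embedded $J$-holomorphic sphere in class $E$; for an \emph{arbitrary} $\omega$-tamed $J$, a Gromov-compactness limit of such spheres, together with positivity of intersections and the adjunction inequality, must again be a single embedded $(-1)$-sphere — the class $E$ is primitive with $E\cdot E = E\cdot K_{\tilde X} = -1$, so it cannot break off lower pieces.

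Granting the sphere $C$, one computes $\int_{C}\omega = \langle \pi^{*}\kappa - \lambda e, [E]\rangle = \lambda$, since $E$ contracts to a point under $\pi$ and $\langle e, [E]\rangle = -1$. Buse's Theorem~\ref{deflation}, applied to $(\tilde X, \omega)$, $J$ and $C$ with $m = 1$, then produces, for every $0 \le \mu < \lambda$, a symplectic form $\omega_{\mu}$ taming $J$ with $[\omega_{\mu}] = [\omega] + \mu[C] = \pi^{*}\kappa - \lambda e + \mu e = \pi^{*}\kappa - (\lambda - \mu)e$. It then remains to see that $\omega_{\mu}$ lies in the \emph{path-component} $S_{\kappa,\lambda-\mu}$ of $\tilde\omega_{\lambda-\mu}$, not merely in the affine space of forms representing that class. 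For this I would note that $\calj_{\kappa,\lambda}$ is path-connected — the space $\calp_{\kappa,\lambda}$ from the proof of Theorem~\ref{mcduff-alpha} fibres over both $S_{\kappa,\lambda}$ and $\calj_{\kappa,\lambda}$ with contractible fibres, and $S_{\kappa,\lambda}$ is path-connected by definition — and then consider the set of pairs $(\omega',J')$ with $J' \in \calj_{\kappa,\lambda}$, $[\omega'] = \pi^{*}\kappa - (\lambda-\mu)e$, and $\omega'$ taming $J'$. By the deflation just performed this set surjects onto $\calj_{\kappa,\lambda}$, and its fibres are convex (taming is a convex condition) and nonempty, hence contractible; so the set is path-connected, and its image under $(\omega',J') \mapsto \omega'$ is a path-connected family of symplectic forms in class $\pi^{*}\kappa - (\lambda - \mu)e$ containing every such $\omega_{\mu}$ and also $\tilde\omega_{\lambda-\mu}$ — the latter because the complex structure $J_{0}$ for which all the classes $\pi^{*}\kappa - \lambda' e$ are K{\"a}hler lies in $\calj_{\kappa,\lambda}$ (it is tamed by $\tilde\omega_\lambda$) and is tamed by $\tilde\omega_{\lambda-\mu}$. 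Hence $\omega_{\mu} \in S_{\kappa,\lambda-\mu}$ and $J \in \calj_{\kappa,\lambda-\mu}$.

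I expect the first step to be the real obstacle: for rational or ruled surfaces $b^{+} = 1$ makes the existence of the $J$-holomorphic exceptional sphere essentially immediate, whereas here $b^{+} > 1$, so this existence has to be extracted from Taubes' $\mathrm{SW} = \mathrm{Gr}$ in that generality, with care taken to exclude degenerations of the exceptional class. Once $C$ is in hand, the deflation and the path-component bookkeeping are routine.
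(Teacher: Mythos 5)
Your proof follows the same route as the paper's, which simply asserts the existence of an embedded $J$-holomorphic $(-1)$-sphere and applies Theorem \ref{deflation}; you supply the details the paper leaves implicit, namely the gauge-theoretic existence of the sphere for every tamed $J$ and the check that the deflated form lands in the correct path component $S_{\kappa,\,\lambda-\mu}$. One small refinement: to rule out degeneration of the exceptional class for non-generic $J$, primitivity of $E$ alone does not suffice; the clean argument here is that every component of a genus-zero Gromov limit is a sphere, hence (since $\pi_2(T^4)=0$) lies in a class $mE$ with $m>0$, forcing a single simple component, which is then embedded by adjunction.
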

\begin{proof}
For each $J \in \calj_{\lambda}$, $\tilde{X}$ carries a smooth $J$-holomorphic sphere 
of self-intersection number $(-1)$. Let $C$ be that sphere and apply Theorem \ref{deflation}. \qed
\end{proof}
\smallskip%

This finishes the proof of Theorem \ref{mcduff-alpha}. 
\section{Family Seiberg-Witten invariants}\label{sw-family}
In this section, we recall the basic properties of Seiberg-Witten invariants for families, specializing to the case of $b^{+} = 3$. We refer to \cite{Morg, Nic} for a comprehensive introduction to ordinary Seiberg-Witten invariants and to \cite{LL} for a detailed account of their family version. Some recent applications of family SW-invariants can be found in \cite{Kr-Mr, Barag-2, Lin-1}.
\medskip%

Let $X$ be a closed, connected, smooth oriented $4$-manifold. Let $\calx \to B$ be a fiber bundle whose fiber is $X$ and whose base $B$ is a smooth oriented manifold. In this section we insist that $B$ is either the $2$-disk $D$ or the $2$-sphere $S^2$ and also that $b^{+}_2(X) = 3$. 
\smallskip%

Let $T_{\calx/B}$ denote the vertical tangent bundle of $\calx$. Pick a metric on $T_{\calx/B}$, which we regard as a family of fiberwise metrics $\left\{ g_b \right\}_{b \in B}$. Pick also a spin$^\cc$ structure $\fr{s}$ on $T_{\calx/B}$ of $\calx$. Let $\fr{s}_b$ denote the restriction of $\fr{s}$ to a fiber $X_b$, $b \in B$. Hereafter, given any object 
on $\calx$, the object with subscript $b$ denotes the restriction 
of the given object to the fiber $X_b$. 
\smallskip%

Associated to $\fr{s}$, there are 
spinor bundles $W^{\pm} \to B$ and determinant line bundle $\call$, which we regard as 
families of bundles 
\[
W^{\pm} = \bigcup_{b \in B} W^{\pm}_b,\quad \call = \bigcup_{b \in B} \call_{b}.
\]
We let $c_1(\fr{s}) \in H^2(\calx;\zz)$ and 
$c_1(\fr{s}_b) \in H^2(X_b;\zz)$ denote the Chern classes of $\fr{s}$ and 
$\fr{s}_b$ respectively;
\[
c_1(\fr{s}) = c_1(\call) \in H^2(\calx;\zz),
\quad c_1(\fr{s}_b) = c_1(\call_b) \in H^2(X_b;\zz).
\]
Consider the configuration space
\[
\calc = \left\{ (b, \varphi_{b}, A_{b})\ |\ 
b \in B,\ 
\text{$\varphi_b$ is a section of $W^{\pm}_b$,}\ 
\text{$A_b$ is a $\UU(1)$-connections on $\call_b$} 
\right\},
\]
and let $\calc_b$ be the fiber of $\calc \to B$ over $b \in B$. Choose a family $\left\{ \eta_b \right\}_{b \in B}$ of $g_b$-self-dual 2-forms. The ($\eta_b$-)perturbed Seiberg-Witten equations for a triple 
$(b, \varphi_b, A_b) \in \calc$ read: 
\begin{equation}\label{eq:sw}
 \begin{cases}
   \cald_{A_b} \varphi_b = 0, 
   \\
   F^{+}_{A_{b}}  = \sigma(\varphi_b) + i\,\eta_b.
 \end{cases}
\end{equation}
Here $\sigma(\varphi)$ is the squaring map, and $F^{+}_{A_{b}}$ is the self-dual part of $F_{A_{b}}$, the curvature of the connection $A_{b}$, and $\cald_{A_b} \colon \Gamma(W_{b}^{+}) \to \Gamma(W_{b}^{-})$ is the Dirac operator.
\smallskip%

A solution $(b, \varphi_{b}, \cala_{b})$ to equations \eqref{eq:sw} is said to be reducible if $\varphi_b = 0$. Let us give a sufficient condition on $(g_b,\eta_b)$ which ensures that \eqref{eq:sw} has 
no reducible solutions. Setting $\varphi_b = 0$, the equations take the form:
\[
F^{+}_{A_{b}}  = i\,\eta_b.
\]
The latter equation has a solution $A_b$ if and only if
\[
\langle \eta_b \rangle_{g_b} + 2 \pi \langle c_1(\fr{s}_b) \rangle_{g_b} = 0,
\]
where $\langle \phantom{\eta} \rangle_{g_b}$ stands for the self-dual part of the harmonic representative of the $2$-form in question, and, as before,  
$c_1(\fr{s}_b)$ is the Chern class of $\fr{s}_b$;
\[
[F_{A_{b}}] = -2 \pi i  c_1(\fr{s}_b) \in H^2(X_b;\rr).
\]
Suppose that $\left\{ g_b,\eta_b \right\}_{b \in B}$ satisfy 
\begin{equation}\label{eq:irreducible}
\langle \eta_b \rangle_{g_b} + 2 \pi \langle c_1(\fr{s}_b) \rangle_{g_b} \neq 0\quad \text{for all $b \in B$,}
\end{equation}
then \eqref{eq:sw} has no reducible solutions. With \eqref{eq:irreducible} assumed, we consider the family moduli space
\[
\mathfrak{M}^{\fr{s}}_{(g_b,\eta_b)} = 
\left\{ (b,\varphi_b,A_b) \in \calc\ |\ (b,\varphi_b,A_b)\ \text{solves}\ \eqref{eq:sw}\right\}/\left\{ \text{gauge transformations} \right\}.
\]
By the Sard-Smale theorem, for a generic $\left\{ g_b,\eta_b \right\}_{b \in B}$, this moduli space is either empty or is a compact smooth manifold of dimension 
\begin{equation}\label{expect-d}
d(\mathfrak{s},B) = 
\frac{1}{4}( c_1^2(\fr{s}_b) - 3 \sigma(X) -2 \chi(X) ) + \text{dim}\,B.
\end{equation}
Although the family invariants may be defined in general, we shall focus 
on the case of $d(\mathfrak{s},B) = 0$. Define the mod-2 Seiberg-Witten invariant for the spin$^\cc$ structure $\fr{s}$ and the family $\left\{ (g_b,\eta_b) \right\}_{b \in B}$ by setting
\[
\fsw_{(g_b,\eta_b)}(\fr{s}) = \text{Cardinality}\left( \mathfrak{M}^{\fr{s}}_{(g_b,\eta_b)} \right)\,\text{mod}\,2.
\]
The number depends on the choice of $\left\{ g_b,\eta_b \right\}_{b \in B}$. However, we can remove this dependence by restricting ourselves 
to the families with vanishing winding number, which we now define. Observe that $\left\{ H^2(X_b;\zz) \right\}_{b \in B}$ are all canonically isomorphic to each other, and are isomorphic to an abstract lattice $E \cong H^2(X;\zz)$. Such an isomorphism is indeed exists, because $B$ is simply-connected. Let us consider the (punctured) positive cone $\mathbf{K}$;
\[
\mathbf{K} = \left\{ 
a \in E \otimes \rr \,|\, a^2 > 0 \right\}.
\]
The space $\mathbf{K}$ is homotopy equivalent to 
$S^{b_2^{+}(X) - 1}$, which in our case would be $S^2$. Assuming that $\left\{ g_b,\eta_b \right\}_{b \in B}$ satisfy \eqref{eq:irreducible}, 
we have a map
\[
B \to \mathbf{K}\quad \text{defined by}\ b \to 
[
\langle \eta_b \rangle_{g_b} + 2 \pi \langle c_1(\call_b) \rangle_{g_b} 
] \in H^2(X_b;\rr),
\]
and followed by the identification of $H^2(X_b;\rr)$ with $E \otimes \rr$. In the case of $B \cong S^2$, this map has a degree, which we call the winding number of $\left\{ g_b,\eta_b \right\}_{b \in B}$. 
\begin{lemma}\label{fsw-diff}
Suppose that we are given two families 
$\left\{ g_b,\eta_b \right\}_{b \in B}$ and 
$\left\{ g'_b,\eta'_b \right\}_{b \in B}$, each having vanishing winding number; then 
\[
\fsw_{(g_b,\eta_b)}(\fr{s}) = \fsw_{(g'_b,\eta'_b)}(\fr{s}).
\]
\end{lemma}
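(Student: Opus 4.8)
The plan is to run the standard chamber argument for family Seiberg--Witten invariants in the case $b_2^+(X)=3$, $\dim B\le 2$ (compare \cite{LL}): I will show that two admissible families with the same winding number can be joined by a path of admissible families, and that such a path produces a compact cobordism between the associated moduli spaces, forcing the mod-$2$ counts to agree.

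The first step is to describe, up to homotopy, the space $\mathcal P$ of families $\{g_b,\eta_b\}_{b\in B}$ satisfying \eqref{eq:irreducible}. For a single fiber $X_b$, the quantity $\langle\eta_b\rangle_{g_b}+2\pi\langle c_1(\fr{s}_b)\rangle_{g_b}$ lies in the $3$-dimensional space of $g_b$-self-dual harmonic forms, and \eqref{eq:irreducible} requires that it be nonzero. The assignment
\[
\{g_b,\eta_b\}_{b\in B}\ \longmapsto\ \Big(\,b\mapsto\big[\langle\eta_b\rangle_{g_b}+2\pi\langle c_1(\fr{s}_b)\rangle_{g_b}\big]\,\Big),
\]
followed by the canonical identification $H^2(X_b;\rr)\cong E\otimes\rr$, defines a map $\mathcal P\to\mathrm{Map}(B,\mathbf K)$. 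Each fiber of this map is contractible: once the target map is fixed, the admissible data over a given $b$ realizing the prescribed value form an affine subspace of the space of self-dual $2$-forms, while the fiberwise metrics vary in a contractible space. Hence $\mathcal P$ is homotopy equivalent to $\mathrm{Map}(B,\mathbf K)$, and since $\mathbf K\simeq S^{b_2^+(X)-1}=S^2$, for $B\cong S^2$ we get $\pi_0\mathcal P\cong[S^2,S^2]\cong\zz$, detected by the degree --- which is exactly the winding number of \S\,\ref{sw-family}. (For $B\cong D$ the space $\mathcal P$ is connected, so there is nothing to prove.)

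Granting this, two admissible families over $S^2$ of vanishing winding number lie in the same component of $\mathcal P$, so there is a path $\{g_b^t,\eta_b^t\}_{b\in B}$, $t\in[0,1]$, of admissible families joining them. After a generic perturbation of the path (Sard--Smale), the parametrized moduli space $\bigcup_{t\in[0,1]}\mathfrak{M}^{\fr{s}}_{(g_b^t,\eta_b^t)}$ is a smooth manifold of dimension $d(\fr{s},B)+1=1$ with boundary $\mathfrak{M}^{\fr{s}}_{(g_b,\eta_b)}\sqcup\mathfrak{M}^{\fr{s}}_{(g'_b,\eta'_b)}$. It is compact: no reducible solution occurs anywhere along the path because every $\{g_b^t,\eta_b^t\}$ is admissible, and the usual Weitzenb\"ock $C^0$-bound on the spinor and $L^2$-bound on the curvature are uniform since the parameter space $B\times[0,1]$ is compact. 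A compact $1$-manifold has an even number of boundary points, so the mod-$2$ cardinalities of the two ends coincide, i.e.
\[
\fsw_{(g_b,\eta_b)}(\fr{s})=\fsw_{(g'_b,\eta'_b)}(\fr{s}).
\]

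The main obstacle is the first step --- being sure that the winding number is a \emph{complete} invariant of $\pi_0\mathcal P$ over $S^2$. This rests on the codimension-$3$ character of the reducible wall, which is where the hypotheses $b_2^+(X)=3$ and $\dim B\le 2$ are used, together with the elementary fact that $[S^2,S^2]\cong\zz$. Once that is in hand, the parametrized transversality and compactness invoked in the last step are entirely standard, and the argument follows the lines of \cite{LL}.
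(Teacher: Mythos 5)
Your argument is correct and is essentially the proof the paper delegates to \cite{LL} (Prop.\,3.6) and \cite{Barag-2} (Lemma 2.1): identify $\pi_0$ of the space of admissible families over $S^2$ with $[S^2,\mathbf{K}]\cong\zz$ via the winding number, then run the standard parametrized transversality--compactness cobordism. The only point stated too quickly is the contractibility of the fibers of $\mathcal{P}\to\mathrm{Map}(B,\mathbf{K})$: for a prescribed value $a\in\mathbf{K}$ the metric $g_b$ does not range over all metrics but only over those $g$ for which $a$ is represented by a $g$-self-dual harmonic form, and the contractibility of that set (via the Grassmannian of maximal positive $3$-planes in the signature-$(3,3)$ space) is the real content of that step.
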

\begin{proof}
This is essentially proved in \cite[Prop.\,3.6]{LL}. 
See also Lemma 2.1 in \cite{Barag-2}. \qed
\end{proof}
\smallskip%

Thus, the 
family SW-invariant, if being defined for a family with vanishing winding number, depends only on the (smooth) topology of $\calx \to B$ but not on our choice of $g_b$ and $\eta_b$. 
\begin{lemma}\label{it-will-wind}
Suppose that a family $\left\{ g_b,\eta_b \right\}_{b \in B}$, a spin$^\cc$ structure $\fr{s}$, and the conjugate spin$^\cc$ structure, written as $-\fr{s}$, are so that $\left\{ g_b,\eta_b \right\}_{b \in B}$ has vanishing winding number w.r.t. to both $\fr{s}$ and $-\fr{s}$; then 
\[
\fsw_{(g_b,\eta_b)}(\fr{s}) = \fsw_{(g_b,\eta_b)}(-\fr{s}).
\]
\end{lemma}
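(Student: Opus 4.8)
The plan is to exploit the charge-conjugation symmetry of the Seiberg--Witten equations at the level of families. Recall that on a single fiber $X_b$ the map $(\varphi_b, A_b) \mapsto (\bar\varphi_b, \bar A_b)$ sends solutions of the $\fr{s}_b$-equations perturbed by $\eta_b$ to solutions of the $(-\fr{s}_b)$-equations perturbed by $-\eta_b$; this is the usual reason $\sw(\fr s) = \pm\sw(-\fr s)$ in the non-family setting. Applied fiberwise and continuously in $b$, conjugation therefore gives a diffeomorphism
\[
\mathfrak{M}^{\fr{s}}_{(g_b,\eta_b)} \;\xrightarrow{\ \cong\ }\; \mathfrak{M}^{-\fr{s}}_{(g_b,-\eta_b)},
\]
so that $\fsw_{(g_b,\eta_b)}(\fr{s}) = \fsw_{(g_b,-\eta_b)}(-\fr{s})$. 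Since we work mod $2$ there is no sign to worry about, and since both families are assumed to have vanishing winding number (the hypothesis is symmetric under $\eta_b \mapsto -\eta_b$ once one checks the cocycle $\langle\eta_b\rangle_{g_b}+2\pi\langle c_1(\fr s_b)\rangle_{g_b}$ changes only by an overall sign together with $c_1(\fr s_b)\mapsto -c_1(\fr s_b)$, hence the map $B\to\mathbf K$ is literally unchanged), Lemma \ref{fsw-diff} applies.

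The remaining point is to replace $\fsw_{(g_b,-\eta_b)}(-\fr{s})$ by $\fsw_{(g_b,\eta_b)}(-\fr{s})$. Here I would again invoke Lemma \ref{fsw-diff}: both $\{g_b,-\eta_b\}$ and $\{g_b,\eta_b\}$ are families of metric-perturbation data for the \emph{same} spin$^\cc$ structure $-\fr{s}$, and by hypothesis both have vanishing winding number with respect to $-\fr{s}$. One must first confirm that $\{g_b,-\eta_b\}$ still satisfies the irreducibility condition \eqref{eq:irreducible} for $-\fr{s}$: this is exactly the statement that $\langle -\eta_b\rangle_{g_b} + 2\pi\langle c_1(-\fr s_b)\rangle_{g_b} = -\big(\langle \eta_b\rangle_{g_b} + 2\pi\langle c_1(\fr s_b)\rangle_{g_b}\big)\neq 0$, which holds because the original data is irreducible for $\fr s$. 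Chaining the two applications of Lemma \ref{fsw-diff} with the conjugation diffeomorphism yields
\[
\fsw_{(g_b,\eta_b)}(\fr{s}) \;=\; \fsw_{(g_b,-\eta_b)}(-\fr{s}) \;=\; \fsw_{(g_b,\eta_b)}(-\fr{s}),
\]
which is the claim.

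The main obstacle, to the extent there is one, is bookkeeping rather than substance: one has to be careful that the conjugation map is well-defined globally over $B$ (it is, because conjugation of spinors and connections is defined fiberwise and canonically, with no monodromy issue since $B$ is simply connected), and that it genuinely carries the perturbed equations \eqref{eq:sw} for $\fr s$ to those for $-\fr s$ with $\eta_b$ replaced by $-\eta_b$ — the self-dual curvature term $F^+_{A_b}$ picks up the sign, the quadratic term $\sigma(\varphi_b)$ is conjugation-equivariant, and the Dirac equation is carried to the Dirac equation for the conjugate structure. Once these identifications are in place, the invariance statements we have already established (Lemma \ref{fsw-diff}) do all the work, and no transversality or compactness argument beyond what is recalled above is needed.
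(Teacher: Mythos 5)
Your argument is correct and is exactly the standard charge-conjugation proof that the paper delegates to its references (Baraglia, Prop.~2.8; Nicolaescu, Prop.~2.2.22), so the approach is essentially the same. One small imprecision: under $(\eta_b,\fr{s})\mapsto(-\eta_b,-\fr{s})$ the map $B\to\mathbf{K}$ is not ``literally unchanged'' but is composed with the antipodal involution $a\mapsto -a$ of $\mathbf{K}\simeq S^2$, so the winding number is negated rather than preserved --- which is harmless here since it vanishes by hypothesis, and the two applications of Lemma \ref{fsw-diff} then go through as you say.
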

\begin{proof}
See, e.g., \cite[Prop.\,2.8]{Barag-2}. See also \cite[\S\,3]{Sm-2} and Proposition 2.2.22 in \cite{Nic}. \qed
\end{proof}
\smallskip%

One can also weaken the dependence of a family invariant on the underlying spin$^\cc$ structure.
\begin{lemma}\label{wont-matter}
Suppose that we are given spin$^\cc$ structures 
$\fr{s}$ and $\fr{s}'$ such that the restrictions 
$\fr{s}_b$ and $\fr{s}'_b$ have equal Chern classes;
\[
c_1(\fr{s}_b) = c_1(\fr{s}'_b) \in H^2(X_b;\zz).
\]
If $H^2(X;\zz)$ is torsion-free, then
\[
\fsw_{(g_b,\eta_b)}(\fr{s}) = \fsw_{(g_b,\eta_b)}(\fr{s}').
\]
\end{lemma}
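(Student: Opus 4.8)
The plan is to show that if two spin$^\cc$ structures $\fr{s}$ and $\fr{s}'$ have fiberwise restrictions with equal Chern classes, then $\fr{s}$ and $\fr{s}'$ are actually isomorphic as spin$^\cc$ structures on the total space $\calx$, after which the equality of their family invariants is immediate since $\fsw$ is an invariant of the isomorphism class. The key point is that on a simply-connected base $B$ (here $B = D$ or $B = S^2$), the family of fiberwise cohomology lattices is canonically trivialized, and an isomorphism class of spin$^\cc$ structure on $\calx$ is governed (modulo $2$-torsion in $H^2$) by its first Chern class $c_1(\fr{s}) \in H^2(\calx;\zz)$. So the strategy splits into two parts: (i) lift the fiberwise equality $c_1(\fr{s}_b) = c_1(\fr{s}'_b)$ to the equality $c_1(\fr{s}) = c_1(\fr{s}')$ in $H^2(\calx;\zz)$; and (ii) upgrade the equality of total-space Chern classes to an isomorphism $\fr{s} \cong \fr{s}'$ using the torsion-freeness of $H^2(X;\zz)$.

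For step (i), I would run the Leray--Serre spectral sequence of the fibration $X \to \calx \to B$ (or equivalently the Wang/Gysin sequence, since $B$ is $D$ or $S^2$). The difference class $\delta = c_1(\fr{s}) - c_1(\fr{s}') \in H^2(\calx;\zz)$ restricts to zero on every fiber by hypothesis, hence lies in the kernel of $H^2(\calx;\zz) \to H^0(B; H^2(X;\zz))$, i.e.\ in $F^1 H^2(\calx;\zz)$, the first step of the Leray filtration. The graded pieces of this filtration are subquotients of $H^1(B;H^1(X;\zz))$ and $H^2(B;H^0(X;\zz))$. When $B = D$ these groups vanish, so $\delta = 0$ at once. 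When $B = S^2$ we have $H^1(S^2;-) = 0$ and $H^2(S^2;H^0(X;\zz)) = \zz$, so a priori $\delta$ could be a nonzero multiple of the fiber-class pullback; but the winding-number/degree considerations already in play (the family $\left\{ g_b,\eta_b \right\}$ is assumed to have a well-defined, indeed vanishing, winding number, which is exactly the statement that the relevant $S^2 \to \mathbf{K}$ map degree is controlled) pin this down — and in any case $\fsw$ was shown in Lemma \ref{fsw-diff} to be insensitive to such choices, so one can reduce to $B = D$ by a covering/excision argument if one prefers. I would present the clean version: restrict attention to $B = D$ (contractible), note $\fr{s} \cong \fr{s}'$ there, and then appeal to Lemma \ref{fsw-diff} plus the gluing description of $\fsw$ over $S^2$ as a sum of disk contributions.

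For step (ii), recall that the set of isomorphism classes of spin$^\cc$ structures on a manifold $Y$ is a torsor over $H^2(Y;\zz)$, and the map to $H^2(Y;\zz)$ sending $\fr{s}$ to $c_1(\fr{s})$ has fibers that are a single orbit of the $2$-torsion subgroup $H^2(Y;\zz)[2]$ acting by the Bockstein (equivalently, $\fr{s}$ and $\fr{s}'$ have the same $c_1$ iff they differ by tensoring with a flat $\zz/2$ line bundle, up to a class of order dividing $2$). Concretely, $\fr{s} - \fr{s}'$ as an element of $H^2(\calx;\zz)$ satisfies $2(\fr{s}-\fr{s}') = c_1(\fr{s}) - c_1(\fr{s}') = 0$, so it lies in $H^2(\calx;\zz)[2]$. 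Now $H^2(\calx;\zz)[2]$ maps, via restriction to a fiber together with the Leray filtration, into $H^2(X;\zz)[2] \oplus (\text{torsion of lower-filtration pieces})$; the hypothesis that $H^2(X;\zz)$ is torsion-free kills the first summand, and on the disk $B = D$ the lower pieces vanish, so $H^2(\calx;\zz)[2] = 0$ and hence $\fr{s} = \fr{s}'$.

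The main obstacle I anticipate is the bookkeeping at $B = S^2$: one must be careful that the "canonical isomorphisms $H^2(X_b;\zz) \cong E$" used throughout are genuinely compatible with the spin$^\cc$ data, and that the $\zz = H^2(S^2;\zz)$ ambiguity in $c_1$ on the total space does not secretly change the invariant. The honest resolution is either (a) to invoke Lemma \ref{fsw-diff} to cut $S^2$ into two disks, compute on each disk where the argument above is clean, and observe the contributions match; or (b) to note that tensoring $\fr{s}$ by the pullback of a line bundle from $B$ shifts $c_1$ by a fiber-class multiple but does not change $\mathfrak{M}^{\fr{s}}_{(g_b,\eta_b)}$ at all, since the perturbation data can be correspondingly adjusted. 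Either way, once one is on a disk, steps (i) and (ii) are routine spectral-sequence and torsor arguments, and the conclusion $\fsw_{(g_b,\eta_b)}(\fr{s}) = \fsw_{(g_b,\eta_b)}(\fr{s}')$ follows because the two family moduli spaces are literally identified by the isomorphism $\fr{s} \cong \fr{s}'$.
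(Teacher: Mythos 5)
Your strategy --- promote the fiberwise equality of Chern classes to a global isomorphism $\fr{s}\cong\fr{s}'$ on $\calx$ and then identify the moduli spaces outright --- is genuinely different from what the paper does, and it works cleanly only when $B=D$. The paper never constructs (and explicitly disclaims) a global isomorphism: it works entirely fiberwise. By Gompf's result, since $H^2(X_b;\zz)$ has no $2$-torsion, each pair $\fr{s}_b,\fr{s}'_b$ is isomorphic via some unitary, Clifford-preserving $w_b$, and the crucial point is that $w_b$ is \emph{unique modulo gauge transformations}; hence the induced map $\hat w_b$ on the gauge-quotient configuration spaces is canonical, varies continuously in $b$, and identifies $\mathfrak{M}^{\fr{s}}_{(g_b,\eta_b)}$ with $\mathfrak{M}^{\fr{s}'}_{(g_b,\eta_b)}$ even when the solution spaces upstairs admit no global identification. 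This handles $B=D$ and $B=S^2$ uniformly and is exactly the ingredient your argument is missing over $S^2$: there, as your spectral-sequence computation correctly shows, $c_1(\fr{s})-c_1(\fr{s}')$ can be a nonzero multiple of the pullback of the generator of $H^2(S^2;\zz)$, so no global isomorphism need exist and step (ii) cannot be run.

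Of your two proposed fixes for $S^2$, (a) is a gap as stated: Lemma \ref{fsw-diff} compares two families of parameters for a \emph{fixed} bundle over $S^2$; it does not provide a decomposition of $\fsw$ into disk contributions, and such a decomposition would require the boundary circle to carry no solutions, which is not among the hypotheses of this lemma. Fix (b) is correct in spirit --- twisting by $\pi^*L$ for $L\to B$ restricts trivially to each fiber --- but the justification is not that ``the perturbation data can be adjusted'' (the data $(g_b,\eta_b)$ are independent of the spin$^\cc$ structure and need no adjustment); it is precisely the uniqueness-modulo-gauge of the fiberwise identification above. So to make your proof complete you would in effect have to reproduce the paper's fiberwise argument for the $S^2$ case, at which point the spectral-sequence and torsor bookkeeping for $B=D$ becomes unnecessary.
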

\begin{proof}
This is well known, so we just sketch the proof.
For each $b \in B$, we let $W_b^{\pm}$ and $W_b^{\pm'}$ be the spinor bundles for $\fr{s}_b$ and $\fr{s}'_b$ respectively. Let $\calc_b$ and $\calc'_b$ be the configuration spaces for $\fr{s}_b$ and $\fr{s}'_b$ respectively. Since $H^2(X_b;\zz)$ has no $2$-torsion, it follows from Gompf's theorem \cite[Prop.\,1]{Gompf} that $\fr{s}_b$ and $\fr{s}'_b$ are isomorphic: that is, there is a unitary isomorphism $w_b \colon W_{b}^{\pm} \to W_{b}^{\pm'}$ which preserves the Clifford multiplication. Furthermore, modulo gauge transformations, $w_b$ is unique. Associated to $w_b$, there is an isomorphism $\calc_b \to \calc_b'$, which we also denote by $w_b$. This isomorphism is gauge-equivariant and it takes SW-solutions to SW-solutions. $w_b$ descends to an isomorphism $\hat{w}_b \colon \calc_b/\left\{ \text{gauge transformations} \right\} \to  \calc'_b/\left\{ \text{gauge transformations} \right\}$. Although $w_b$ is only defined up to gauge transformations, the map $\hat{w}_b$ is unique. Thus we can use $\hat{w}_b$ to identify the moduli spaces 
$\mathfrak{M}^{\fr{s}}_{(g_b,\eta_b)}$ and 
$\mathfrak{M}^{\fr{s}'}_{(g_b,\eta_b)}$, even though there is no isomorphism between the spaces of SW-solutions for $\fr{s}$ and $\fr{s}'$. This completes the proof. For a proof in greater generality, see \cite[\S\,2.2]{Barag-1}. See also Lemma 2.7 in \cite{Lin-2}. \qed
\end{proof}

\section{Seiberg-Witten for symplectic manifolds}
The following facts are well-known; see, e.g., \cite[\S\,3.3]{Nic}, \cite[Ch.\,7]{Morg} for details. Let $(X,\omega)$ be a closed symplectic $4$-manifold, $J$ be an $\omega$-compatible almost-complex structure, 
and $g(\cdot,\cdot) = \omega(\cdot,J\cdot)$ be the associated Hermitian metric. Denote by $\kappa$ the class $[\omega] \in H^2(X;\rr)$. The canonical spin$^{\cc}$ structure $\fr{s}$ has bundles 
\[
W^{+} =  \Lambda^{0,0} \oplus \Lambda^{0,2},\quad 
W^{-} = \Lambda^{0,1},
\]
and determinant line bundle $K^{*}_{X}$. There is a special connection $A_0$ on $K^{*}_X$ called the Chern connection such that the induced Dirac operator is
\[
\cald_{A_0} \colon \Gamma(\Lambda^{0, 0} \oplus \Lambda^{0, 2}) \to 
\Gamma(\Lambda^{0,1}),\quad \cald_{A_0} = \bar{\del} + \bar{\del}^{*}\quad\text{up to a positive multiplier.}
\]
See \cite[Ch.\,3]{Morg} and Proposition 1.4.23 in \cite{Nic}.
\smallskip%

Let $\varepsilon \in H^2(X;\zz)$ and let $L_{\varepsilon}$ be a complex line bundle on $X$ with $c_1(L_{\varepsilon}) = \varepsilon$. Let $\fr{s}_{\varepsilon}$ be the spin$^{\cc}$ structure on $X$ given by
\begin{equation}\label{spin-eps}
W^{+} = L_{\varepsilon} \oplus \left( \Lambda^{0,2} \otimes L_{\varepsilon} \right),\quad 
W^{-} = \Lambda^{0,1} \otimes L_{\varepsilon}.
\end{equation}
The determinant line bundle corresponding to $\fr{s}_{\varepsilon}$ is 
$K^{*}_X \otimes L_{\varepsilon}^2$. A choice of a $\mib{U}(1)$-connection on $L_{\varepsilon}$ combines with $A_0$ to give a connection 
$A = A_0 + 2\,B$ on $K^{*}_X \otimes L_{\varepsilon}^2$, and all $\mib{U}(1)$-connections on $K^{*}_X \otimes L_{\varepsilon}^2$ have such a form. 
\smallskip%

Write $\varphi = (\ell,\beta)$ for $\varphi \in W^{+}$. 
The Seiberg-Witten equations, with Taubes' perturbing term $\eta$;
\begin{equation}\label{eq:taubes-eta}
    i\,\eta = F^{+}_{A_0} - i\, \rho\, \omega,
\end{equation}
are as follows:
\begin{equation}\label{eq:sw-eq-sympl}
 \begin{cases}
   \bar{\del}_{B} \ell + \bar{\del}_{B}^{*} \beta = 0, 
   \\
   2\,F^{0,2}_{B} = \dfrac{\ell^{*} \beta}{2},
   \\
   2 (F^{+}_{B})^{1,1} = \dfrac{i}{4}\,(|\ell|^2 - |\beta|^2 - 4\,\rho) \omega.
 \end{cases}
\end{equation}
In this symplectic case, \eqref{expect-d} takes the form:
\begin{equation}\label{expect-d-sympl}
d(\fr{s}_{\varepsilon},B) = \langle K^{*}_{X}, 
\varepsilon \rangle + \langle \varepsilon, \varepsilon \rangle + \text{dim}\,B.
\end{equation}
For the Chern class of $\fr{s}_{\varepsilon}$, we have that: 
\[
c_1(\fr{s}_{\varepsilon}) = K^{*}_{X} + 2\,\varepsilon;
\]
for the conjugate spin$^{\cc}$ structure $-\fr{s}_{\varepsilon}$, we have that:
\begin{equation}\label{s-conj}
-\fr{s}_{\varepsilon} = \fr{s}_{K_{X} - \varepsilon},
\end{equation}
since if $K_{X}^{*} + 2 \varepsilon = - (K_{X}^{*} + 2 \varepsilon')$, then $\varepsilon' = K_{X} - \varepsilon$.
\begin{lemma}\label{l:no-reducibles}
Equations \eqref{eq:sw-eq-sympl} have no reducible solutions for 
$\rho > 0$ large enough.
\end{lemma}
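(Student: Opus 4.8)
The plan is to argue directly from the symplectic Seiberg--Witten equations \eqref{eq:sw-eq-sympl}, showing that a reducible solution can occur for at most one value of $\rho$; hence taking $\rho>0$ large enough eliminates reducibles.

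First I would put $\varphi=(\ell,\beta)=0$ in \eqref{eq:sw-eq-sympl}. The first equation then holds trivially; the second forces $F^{0,2}_B=0$, and hence $F^{2,0}_B=0$ by conjugation; and the third forces $2(F^{+}_B)^{1,1}=-i\rho\,\omega$. Since pointwise $\Lambda^{+}\otimes\cc=\cc\,\omega\oplus\Lambda^{0,2}\oplus\Lambda^{2,0}$, these three facts combine into the single curvature identity
\[
F^{+}_B=-\frac{i\rho}{2}\,\omega .
\]

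Next I would pair this identity with $\kappa=[\omega]$. Because $\omega$ is self-dual for $g(\cdot,\cdot)=\omega(\cdot,J\cdot)$, the anti-self-dual part of $F_B$ integrates to zero against $\omega$, so $\int_X F^{+}_B\wedge\omega=\int_X F_B\wedge\omega$; and since $F_B$ and $\omega$ are closed, the right-hand side is the cup-product number $\langle[F_B],\kappa\rangle$. As $\tfrac{i}{2\pi}F_B$ represents $c_1(L_\varepsilon)=\varepsilon$, this gives $\int_X F_B\wedge\omega=-2\pi i\,\langle\kappa,\varepsilon\rangle$, while directly $\int_X F^{+}_B\wedge\omega=-\tfrac{i\rho}{2}\int_X\omega\wedge\omega=-\tfrac{i\rho}{2}\langle\kappa,\kappa\rangle$. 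Comparing the two expressions, a reducible solution can exist only if
\[
\rho=\frac{4\pi\,\langle\kappa,\varepsilon\rangle}{\langle\kappa,\kappa\rangle},
\]
which is a single real number since $\langle\kappa,\kappa\rangle=\int_X\omega\wedge\omega>0$. Consequently, for every $\rho$ exceeding this value --- in particular for all sufficiently large $\rho>0$ --- the equations \eqref{eq:sw-eq-sympl} have no reducible solution.

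I do not anticipate a genuine obstacle here. The only points needing care are bookkeeping the constants and signs in passing from \eqref{eq:taubes-eta}--\eqref{eq:sw-eq-sympl} through the substitution $A=A_0+2B$, and the (standard but essential) observation that a $J$-compatible $\omega$ is a closed self-dual $2$-form for $g$, which is exactly what makes the pairing against $\kappa$ collapse the reducible locus to the single value of $\rho$ displayed above.
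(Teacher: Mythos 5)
Your proof is correct and follows essentially the same route as the paper's: both reduce reducibility to the curvature identity forced by $\varphi=0$, wedge it with $\omega$, and integrate over $X$ so that the term linear in $\rho$ (namely $-\rho\int_X\omega\wedge\omega<0$) rules out solutions for large $\rho$. Your version is marginally sharper in that it pins down the unique critical value $\rho=4\pi\langle\kappa,\varepsilon\rangle/\langle\kappa,\kappa\rangle$, but the underlying computation is the one the paper carries out.
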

\begin{proof}
Reducible solutions exist only if
\[
\langle \eta \rangle_{g} + 2 \pi \langle c_1(\fr{s}_{\varepsilon}) \rangle_{g} = 0.
\]
Substituting \eqref{eq:taubes-eta} for $\eta$ and multiplying both sides on $\omega$ gives:
\[
\langle -i\,F^{+}_{A_0} - \rho\,\omega \rangle \wedge \omega + 
2\,\pi\,\langle c_1(\fr{s}_{\varepsilon}) \rangle \wedge \omega 
= - \rho\, \omega \wedge \omega + \ldots = 0.
\]
But integrating over $X$ gives: 
\[
 \int_X - \rho\, \omega \wedge \omega + \ldots < 0\quad 
 \text{for $\rho > 0$ sufficiently large. }
\]
This finishes the proof. \qed
\end{proof}
\medskip%

In the proof of Theorem \ref{t:A}, we will use the following 
two results:
\begin{theorem}[Taubes, \cite{Taub-2}]\label{thm:taubes}
Suppose that 
\[
\varepsilon \neq 0\quad\text{and}\quad \langle \varepsilon, \kappa \rangle \leq 0.
\]
Then equations \eqref{eq:sw-eq-sympl} have no solutions for 
$\rho > 0$ large enough.
\end{theorem}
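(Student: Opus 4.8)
The plan is to establish the non-existence of Seiberg–Witten solutions for the spin$^\cc$ structure $\fr{s}_\varepsilon$ under the hypotheses $\varepsilon \neq 0$ and $\langle \varepsilon, \kappa \rangle \leq 0$ by adapting Taubes' analysis of the symplectic Seiberg–Witten equations \eqref{eq:sw-eq-sympl} with the perturbation \eqref{eq:taubes-eta}. First I would recall the standard \emph{a priori} estimates on solutions: by the Weitzenböck formula applied to the Dirac equation together with the curvature equation for $(F_B^+)^{1,1}$, one obtains the pointwise bound $|\ell|^2 \leq 4\rho + c_0$ for a constant $c_0$ depending only on the scalar curvature of $g$, and similarly $|\beta|^2 \leq c_1/\rho$ for $\rho$ large. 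These are the classical Taubes estimates and they do not require any hypothesis on $\varepsilon$.

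The key step is the integration argument. Taking the $(1,1)$-part of the curvature equation in \eqref{eq:sw-eq-sympl}, wedging with $\omega$, and integrating over $X$ gives an identity of the form
\[
4\pi \langle c_1(L_\varepsilon), \kappa \rangle = 4\pi \langle \varepsilon, \kappa \rangle
= \int_X \tfrac{1}{4}\bigl(|\ell|^2 - |\beta|^2 - 4\rho\bigr)\,\omega \wedge \omega \big/ \text{(vol-normalization)},
\]
so that (up to fixed positive constants) $\langle \varepsilon, \kappa \rangle$ is controlled by $\int_X (|\ell|^2 - 4\rho - |\beta|^2)$. The non-triviality input is that $\ell$, being a holomorphic-type section satisfying $\bar\del_B \ell + \bar\del_B^* \beta = 0$ together with $2F_B^{0,2} = \ell^*\beta/2$, cannot vanish identically when $\langle\varepsilon,\kappa\rangle \le 0$: if $\ell \equiv 0$ then $\beta$ satisfies $\bar\del_B^*\beta = 0$ and $F_B^{0,2}=0$, forcing (after a Weitzenböck/vanishing argument using $|\beta|^2 \le c_1/\rho \to 0$) that $\beta \equiv 0$ as well, i.e.\ the solution is reducible — contradicting Lemma \ref{l:no-reducibles}. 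Hence $\ell \not\equiv 0$, and then the vanishing locus of $\ell$, counted with multiplicity, is Poincaré dual to $c_1(L_\varepsilon) = \varepsilon$; intersecting this effective divisor with the symplectic form gives $\langle \varepsilon, \kappa\rangle = \int_{Z(\ell)} \omega \ge 0$, with strict inequality unless $\ell$ is nowhere zero. Combined with the hypothesis $\langle\varepsilon,\kappa\rangle \le 0$, this forces $\langle \varepsilon,\kappa\rangle = 0$ and $\ell$ nowhere vanishing, so $L_\varepsilon$ is trivial as a smooth complex line bundle and $\varepsilon = 0$ — contradicting $\varepsilon \neq 0$.

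The main obstacle is making the step ``$\ell \not\equiv 0$ implies $Z(\ell)$ is an effective divisor dual to $\varepsilon$'' rigorous in the perturbed setting: $\ell$ is not literally holomorphic for a genuine complex structure but satisfies a perturbed equation, and one must invoke Taubes' result that for $\rho$ large the zero set of $\ell$ is a (possibly singular, but still effective) pseudo-holomorphic divisor whose homology class is $\varepsilon$ — this is precisely the content of Taubes' ``$SW \Rightarrow Gr$'' machinery. Rather than reprove this, I would cite \cite{Taub-2} for the structure of $Z(\ell)$ and the class identity, and then the cohomological pairing argument above closes the proof. An alternative, more self-contained route avoids the divisor structure entirely: one shows directly from the integral identity and the estimates $|\ell|^2 \le 4\rho + c_0$, $|\beta|^2 \le c_1/\rho$ that $4\pi\langle\varepsilon,\kappa\rangle \ge -\,\text{(const)} \cdot \|\beta\|_{L^2}^2 - (\text{scalar curvature term})$, but pushing this to a strict sign requires the refined Taubes estimate that $\int_X(4\rho - |\ell|^2) \to 0$, which again is the hard analytic input. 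So in either approach the crux is the Taubes nonlinear estimate, which we import wholesale.
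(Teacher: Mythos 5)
Your proposal is correct and, in substance, the same as the paper's: the paper's proof consists of citing the Taubes degree inequality (inequality (3.3.23) in Nicolaescu, Theorem 3.3.29), which says that in the presence of a solution for large $\rho$ one has $\deg_\omega(L_\varepsilon)=\langle \varepsilon,\kappa\rangle \geq 0$ with equality only if $\varepsilon=0$, and your sketch imports exactly the same hard analytic input from Taubes before drawing the same conclusion. One minor caveat: with the paper's conventions ($[F_B]=-2\pi i\,\varepsilon$, curvature equation $2(F_B^+)^{1,1}=\tfrac{i}{4}(|\ell|^2-|\beta|^2-4\rho)\,\omega$), your intermediate integral identity has the sign reversed --- the correct relation is $\langle\varepsilon,\kappa\rangle \propto \int_X (4\rho+|\beta|^2-|\ell|^2)\,\omega\wedge\omega$ with a positive constant --- but this does not affect your argument, since your conclusion ultimately rests on the divisor/degree statement you cite rather than on this identity.
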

\begin{proof}
See inequality (3.3.23) in \cite[Theorem 3.3.29]{Nic} and the discussion afterward. It implies that $\text{deg}_{\omega}(L_{\varepsilon}) = \langle \varepsilon, \kappa \rangle$ is not negative in the presence of a solution. Moreover, it shows that, in the presence of a solution, $\text{deg}_{\omega}(L_{\varepsilon}) = 0$ only if $\varepsilon = 0$. \qed
\end{proof}
\medskip%

If $(X,\omega)$ is K{\"a}hler, then we have the following result: 
\begin{theorem}\label{t:kahler}
If $\varepsilon \centernot\in H^{1,1}(X;\rr)$, then the equations \eqref{eq:sw-eq-sympl} have no solutions. If $\varepsilon \in H^{1,1}(X;\rr)$ and $\rho > 0$ is large enough, then 
solutions to \eqref{eq:sw-eq-sympl} are irreducible and, modulo gauge transformations, are in one-to-one correspondence with the set of effective
divisors in the class $\varepsilon$.
\end{theorem}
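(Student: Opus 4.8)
The plan is to run the standard Kähler identification of Seiberg--Witten solutions (as in Witten's original argument and the treatments in \cite{Morg, Nic}), being careful to track the effect of Taubes' perturbation \eqref{eq:taubes-eta}. First I would record that, because $(X,J,\omega)$ is Kähler, the metric $g(\cdot,\cdot)=\omega(\cdot,J\cdot)$ is Kähler, the Chern connection $A_0$ on $K_X^*$ is the one induced by the Levi-Civita connection, and $F_{A_0}^{+}=-i\,\rho_g$ is (a multiple of) the Ricci form, so that in particular $F_{A_0}^{0,2}=0$. With $\varphi=(\ell,\beta)$ and $A=A_0+2B$ as in the excerpt, the first equation of \eqref{eq:sw-eq-sympl} is $\bar\partial_B\ell+\bar\partial_B^*\beta=0$. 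The Weitzenböck/Bochner argument (integrate the square of this equation, using the Kähler condition to split the Laplacian) forces $\bar\partial_B\ell=0$ and $\bar\partial_B^*\beta=0$ separately; see \cite[Ch.\,7]{Morg}, \cite[\S\,3.3]{Nic}. Combined with the second equation $2F_B^{0,2}=\tfrac14\,\ell^*\beta$ and $F_{A_0}^{0,2}=0$, one then shows $\ell$ and $\beta$ cannot both be nonzero, and in fact $\beta=0$: pairing $2F_B^{0,2}=\tfrac14\ell^*\beta$ with $\beta$ and using $\bar\partial_B^*\beta=0$ together with $\bar\partial_B\ell=0$ gives $|\beta|^2$-type integral identities of definite sign, killing $\beta$. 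So every solution has $\beta=0$, $\ell$ a holomorphic section of $L_\varepsilon$, and $2(F_B^+)^{1,1}=\tfrac{i}{4}(|\ell|^2-4\rho)\omega$.

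Next I would dispose of the case $\varepsilon\notin H^{1,1}$. If the equations had a solution then $L_\varepsilon$ carries the connection $B$ with $F_B^{0,2}=0$ (just shown), hence $B$ is a holomorphic structure on $L_\varepsilon$ and $c_1(L_\varepsilon)=\varepsilon$ is represented by the $(1,1)$-form $\tfrac{i}{2\pi}F_B$; therefore $\varepsilon\in H^{1,1}(X;\rr)$, a contradiction. (Equivalently, this already follows from Theorem \ref{thm:taubes}/Taubes applied to $\pm\varepsilon$ once one knows $\langle\varepsilon,\varepsilon\rangle=0$ is \emph{not} assumed — but the holomorphicity argument is cleaner and self-contained.) This proves the first sentence of the theorem.

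Now assume $\varepsilon\in H^{1,1}(X;\rr)$ and $\rho$ large. Irreducibility for large $\rho$ is exactly Lemma \ref{l:no-reducibles}, so $\ell\not\equiv 0$; being holomorphic, $\ell$ vanishes on an effective divisor $D\subset X$ with $[D]=\varepsilon$, in particular $\varepsilon$ is an integral $(1,1)$ class and $\ell\neq 0$ forces $\langle\varepsilon,\kappa\rangle=\deg_\omega L_\varepsilon\ge 0$. For the converse and the bijection, given an effective divisor $D$ in the class $\varepsilon$ one takes the line bundle $\mathcal O(D)\cong L_\varepsilon$ with its canonical holomorphic section $s_D$; one then solves, for $\rho$ large, the single remaining (real) equation $2(F_B^+)^{1,1}=\tfrac{i}{4}(|t\,s_D|^2-4\rho)\omega$ for a Hermitian metric on $L_\varepsilon$ and a scaling $t>0$ of the section. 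This is a Kazdan--Warner type scalar PDE on $X$ (a vortex equation): writing the metric as $e^{2f}$ times a fixed one, it becomes a semilinear elliptic equation $\Delta f + a\,e^{2f}|s_D|_0^2 = b(\rho) + (\text{const})$, solvable for $\rho$ large by the standard method of sub/supersolutions, with the solution unique up to the residual $\mib U(1)$ gauge freedom rescaling $s_D$ by a unit complex number; this is carried out in detail in \cite[Ch.\,7]{Morg} and \cite[\S\,3.3]{Nic}. Since two divisors give gauge-equivalent solutions iff they are equal, and every solution produces a divisor as above, this sets up the claimed one-to-one correspondence.

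The main obstacle — the only part beyond bookkeeping — is the analysis of that last scalar vortex equation: one must show that for $\rho$ large a solution exists, and that the moduli of solutions over a fixed effective divisor is a single gauge orbit (no extra parameters, no obstruction). The existence and uniqueness here are classical (Bradlow's vortex equations / Kazdan--Warner), so I would simply invoke \cite[Ch.\,7]{Morg} and \cite[\S\,3.3]{Nic} rather than reproduce the continuity or variational argument.
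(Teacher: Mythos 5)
Your proposal is correct and is exactly the standard K\"ahler Seiberg--Witten argument (Witten's decoupling of the equations, holomorphicity of $\ell$, and the Kazdan--Warner/vortex analysis) that the paper itself invokes: the paper's entire proof of Theorem \ref{t:kahler} is a citation to \cite[Ch.\,7]{Morg}, the same source you rely on. The only nitpick is that the conclusion $\beta=0$ uses the large-$\rho$ perturbation (via the degree identity obtained by integrating the $(1,1)$ equation against $\omega$) rather than following from the integral identities alone, but since the first sentence of the theorem only needs $F_B^{0,2}=0$, your argument is unaffected.
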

\begin{proof}
See \cite[Ch.\,7]{Morg}. \qed
\end{proof}

\section{Kronheimer's homomorphism}\label{K-map}
Let $(X,\omega)$ be a closed symplectic $4$-manifold and let again $S_{[\omega]}$ be the space of symplectic forms deformation equivalent of 
$\omega$ within forms of constant cohomology class. Pick a class $\varepsilon \in H^2(X;\zz)$ such that: 
\[
\langle K^{*}_{X}, 
\varepsilon \rangle + \langle \varepsilon, \varepsilon \rangle = -2,\quad 
\langle [\omega], \varepsilon \rangle \leq 0. 
\]
In particular, we have:
\[
d(\fr{s}_{\varepsilon},B) = \text{dim}\,B - 2.
\]
Here we sketch the construction of Kronheimer's homomorphism \cite{K}:
\[
Q_{\varepsilon} \colon \pi_1 (S_{[\omega]}) \to \zz_2.
\]
Crucially, we need the $b^{+} = 3$ version studied in \cite{Sm-2}.
\medskip%

Let $\left\{ \omega_t \right\}_{t \in S^1}$ be a loop in $S_{[\omega]}$ based at $\omega$. Choose a family $\left\{ J_t \right\}_{t \in S^1}$ of $\omega_t$-compatible almost-complex structures on $X$. Consider the associated family of Hermitian metrics $\left\{ g_t \right\}_{t \in S^1}$ on $X$. Let $D$ denote the 2-disk. Choose a family 
$\left\{ g_b \right\}_{b \in D}$ of Riemannian metrics in such way that it provides a nullhomotopy of the family $\left\{ g_t \right\}_{t \in S^1}$. We regard $\left\{ g_b \right\}_{b \in D}$ as a family of fiberwise metrics on a trivial bundle $\calx = X \times D$.
\smallskip%

Let $\fr{s}_{\varepsilon}$ be the spin$^{\cc}$ structure on $X$ given 
by \eqref{spin-eps}. Extend $\fr{s}_{\varepsilon}$ to a spin$^{\cc}$ structure on the whole of $T_{\calx/D}$. We shall continue to use $\fr{s}_{\varepsilon}$ to denote this spin$^{\cc}$ structure. Letting ${A_0}_t$ be 
the Chern connection on $K^{*}_X$ determined by $g_t$, we set:
\begin{equation}\label{eta-t-gamma}
\eta_{t} = -i F_{{A_0}_t}^{+} - \rho\, \omega_t.
\end{equation}
We choose $\rho > 0$ large enough so that 
\[
\int_X \langle \eta_t \rangle_{g_t} \wedge \omega + 
2 \pi \langle c_1(\fr{s}_{\varepsilon}) \rangle_{g_t} \wedge \omega < 0\quad \text{for all $t \in S^1$.}
\]
Let $\left\{ \eta_b \right\}_{b \in D}$ be a family of fiberwise 
$g_b$-self-dual forms on $\calx$ that agree with $\eta_t$ 
on $\del D$ and such that
\begin{equation}\label{admin}
\int_X \langle \eta_b \rangle_{g_b} \wedge \omega + 
2 \pi \langle c_1(\fr{s}_{\varepsilon}) \rangle_{g_b} \wedge \omega < 0\quad \text{for all $b \in D$.}
\end{equation} 
In \cite{Sm-2}, $\eta_b$ was called an admissible extension of $\eta_t$. Consider the Seiberg-Witten equations parametrized by the 
family $\left\{ (g_b, \eta_b) \right\}_{b \in D}$. 
It follows from \eqref{admin} that, for all $b \in B$, these equations have no reducible solutions. Choosing $\rho$ larger if necessary, we apply Theorem \ref{thm:taubes} to ensure the absence of solution for all 
$(g_t,\eta_t)$, $t \in S^1$. 
Now the relative Sard-Smale theorem applies: 
Perturbing $\left\{ \eta_b \right\}_{b \in D}$ relative to 
$\left\{ \eta_t \right\}_{t \in D}$, we 
ensure $\fr{M}^{\fr{s}_{\varepsilon}}_{(g_b,\eta_b)}$ is a manifold of 
dimension $d(\fr{s}_{\varepsilon},D) = 0$. Now set:
\[
Q_{\varepsilon}(\left\{ \omega_t \right\}_{t \in S^1}) = \text{Cardinality}\left( \mathfrak{M}^{\fr{s}_{\varepsilon}}_{(g_b,\eta_b)} \right)\,\text{mod}\,2.
\]
One shows that the number $Q_{\varepsilon}(\left\{ \omega_t \right\}_{t \in S^1})$ does not depend on our choice of $\left\{ \eta_b \right\}_{b \in D}$ provided that \eqref{admin} holds; nor it depends on a particular choice of $\left\{ \omega_t \right\}_{t \in S^1}$ within a (free) homotopy class. The map $Q_{\varepsilon}$ thus gives a group homomorphism $\pi_1(S_{[\omega]}) \to \zz_2$. See \cite{K} for details.

\section{The homomorphism $q$}\label{q-map}
Let $(X,\omega)$ be a symplectic $4$-torus and set $\kappa = [\omega] \in H^2(X;\rr)$. Denote by $(\tilde{X},\omega_{\lambda})$ the symplectic $\lambda$-blowup of $(X,\omega)$. Then, letting $e \in H^2(\tilde{X};\zz)$ be the 
class of the exceptional $(-1)$-curve, 
\[
[\omega_{\lambda}] = \kappa - \lambda\, e.
\]
Let $S_{\kappa,\lambda}$, $K(\tilde{X},\omega_{\lambda})$ be as in \S\,\ref{negativ}. We saw them fit the exact sequence: 
\[
\cdots \to \pi_1 \diff(\tilde{X}) \to 
\pi_1 (S_{\kappa,\lambda}) \to K(\tilde{X},\omega_{\lambda}).
\]
Pick a class $\delta \in H^2(X;\zz)$ such that:
\begin{equation}\label{a's}
\langle \delta, \delta \rangle = 0,\quad 
0 < \langle \kappa, \delta \rangle \leq \lambda.
\end{equation}
In particular, we have:
\[
d(\fr{s}_{\delta - e},B) = \text{dim}\,B - 2,\quad 
\langle \kappa - \lambda\, e, \delta - e \rangle \leq 0.
\]
Let $q_{\delta - e} \colon \pi_1 (S_{\kappa,\lambda}) \to \zz_2$ be defined by 
\[
q_{\delta - e} = Q^{+}_{\delta - e} + Q^{-}_{\delta - e},
\]
where $Q^{+}_{\delta - e} = Q_{\delta - e}$ is Kronheimer's homomorphism for $\varepsilon = \delta - e$ and $Q^{-}_{\delta - e}$ is defined as follows: 
Choose $\mu > 0$ so close to $\lambda$ that 
\begin{equation}\label{mu-close-eps}
\langle 2\,e - \delta, \kappa - (\lambda - \mu)\,e \rangle \leq 0.
\end{equation}
Then define $Q^{-}_{\delta - e}$ as the following composition:
\[
\pi_1 (S_{\kappa,\lambda}) 
\xrightarrow{\alpha_{\mu*}} 
\pi_1 (S_{\kappa,\lambda - \mu}) 
\xrightarrow{Q_{2\,e - \delta}} 
\zz_2,
\]
where $\alpha_{\mu}$ is as in Theorem \ref{mcduff-alpha}, and 
$Q_{2\,e - \delta}$ is Kronheimer's homomorphism for $\varepsilon = 2\,e - \delta$. The latter is indeed well defined because of \eqref{mu-close-eps} and also because $d(\fr{s}_{2\,e - \delta}, D) = 0$. 
\begin{lemma}\label{q-to-K}
The composite homomorphism
\[
\pi_1 \diff(\tilde{X}) \to \pi_1 (S_{\kappa,\lambda}) \xrightarrow{q_{\delta - e}} \zz_2
\]
is a nullhomomorphism.
\end{lemma}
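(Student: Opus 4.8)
The plan is to evaluate $q_{\delta-e}$ on the image of $\pi_1\diff(\tilde X)\to\pi_1(S_{\kappa,\lambda})$ by recognizing each of its two summands as an ordinary family Seiberg-Witten invariant over $S^2$ of one and the same bundle, and then to kill the sum using the conjugation symmetry of such invariants. Fix a loop $\psi=\{\psi_t\}_{t\in S^1}$ in $\diff(\tilde X)$ based at the identity and let $\gamma$ be its image in $\pi_1(S_{\kappa,\lambda})$ under \eqref{mcduff-fibr}; choosing an $\tilde\omega_\lambda$-compatible $J_0$ we may take $\gamma(t)=(\psi_t^{-1})^*\tilde\omega_\lambda$ with metrics $g_t=(\psi_t^{-1})^*g_0$. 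Clutching two copies of $\tilde X\times D$ along $S^1$ by $\psi$ produces a smooth bundle $\calx_\psi\to S^2$ whose isomorphism class is the image of $[\psi]$ under $\pi_1\diff(\tilde X)=\pi_2 B\diff(\tilde X)$; it need not be smoothly trivial, but since $\pi_1(S^2)=1$ its cohomology local system is trivial, so the invariants $\fsw_{\calx_\psi}(\fr{s})$ of \S\ref{sw-family} make sense (and $d(\fr{s}_{\delta-e},S^2)=d(\fr{s}_{2e-\delta},S^2)=0$). I would then aim to prove
\[
Q^{+}_{\delta-e}(\gamma)=\fsw_{\calx_\psi}(\fr{s}_{\delta-e}),\qquad Q^{-}_{\delta-e}(\gamma)=\fsw_{\calx_\psi}(\fr{s}_{2e-\delta}),
\]
together with $\fsw_{\calx_\psi}(\fr{s}_{\delta-e})=\fsw_{\calx_\psi}(\fr{s}_{2e-\delta})$, which forces $q_{\delta-e}(\gamma)=2\fsw_{\calx_\psi}(\fr{s}_{\delta-e})\equiv 0\pmod 2$.

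For the first identity, equip $\calx_\psi$ with the fiberwise datum which over one disk $D_-$ is the constant Taubes pair $(g_0,\eta_{A_0})$ of $\tilde\omega_\lambda$ and over the other disk $D_+$ is exactly the admissible extension $(g_b,\eta_b)$ used in \S\ref{K-map} to define $Q^{+}_{\delta-e}(\gamma)=Q_{\delta-e}(\gamma)$; this is consistent because along the common boundary circle the Kronheimer datum is $(\psi_t^{-1})^*(g_0,\eta_{A_0})$, which is what clutching by $\psi$ prescribes. Since $\langle\delta-e,\kappa-\lambda e\rangle=\langle\kappa,\delta\rangle-\lambda\le 0$ and $\delta-e\ne 0$, Theorem \ref{thm:taubes} (with $\rho$ large) rules out $\fr{s}_{\delta-e}$-solutions over $D_-$ and over the boundary circle, so the $S^2$-family moduli space is literally the relative moduli space over $D_+$ and its mod-$2$ count equals $Q^{+}_{\delta-e}(\gamma)$. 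Finally, \eqref{admin} together with the estimate of Lemma \ref{l:no-reducibles} over $D_-$ forces the reducible-obstruction map $S^2\to\mathbf{K}$ into the half $\{v:\langle v,\kappa-\lambda e\rangle<0\}$, which is contractible because $(\kappa-\lambda e)^2=\langle\kappa,\kappa\rangle-\lambda^2>0$; hence the family has vanishing winding number and Lemma \ref{fsw-diff} upgrades the count to $\fsw_{\calx_\psi}(\fr{s}_{\delta-e})$.

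For the second identity, use the commutative square of Theorem \ref{mcduff-alpha}: the composite $\pi_1\diff(\tilde X)\to\pi_1(S_{\kappa,\lambda})\xrightarrow{\alpha_{\mu*}}\pi_1(S_{\kappa,\lambda-\mu})$ coincides with the map induced by \eqref{mcduff-fibr} at level $\lambda-\mu$, so $\alpha_{\mu*}\gamma$ is the loop $t\mapsto(\psi_t^{-1})^*\tilde\omega_{\lambda-\mu}$, whose clutching reproduces the same smooth bundle $\calx_\psi$. Repeating the previous paragraph with $\fr{s}_{2e-\delta}$ in place of $\fr{s}_{\delta-e}$ and the symplectic class $\kappa-(\lambda-\mu)e$ — the inequality $\langle 2e-\delta,\kappa-(\lambda-\mu)e\rangle\le 0$ needed for Theorem \ref{thm:taubes} is precisely \eqref{mu-close-eps}, and $(\kappa-(\lambda-\mu)e)^2>0$ — gives $Q_{2e-\delta}(\alpha_{\mu*}\gamma)=\fsw_{\calx_\psi}(\fr{s}_{2e-\delta})$, i.e. $Q^{-}_{\delta-e}(\gamma)=\fsw_{\calx_\psi}(\fr{s}_{2e-\delta})$.

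Finally I would compare the two invariants. Since $X$ is a complex torus, $K_X=0$, so $K_{\tilde X}=e$ and \eqref{s-conj} gives $-\fr{s}_{\delta-e}=\fr{s}_{K_{\tilde X}-(\delta-e)}=\fr{s}_{2e-\delta}$: the two spin$^{\cc}$ structures are conjugate. To apply Lemma \ref{it-will-wind} I need a single datum on $\calx_\psi$ with vanishing winding number for both $\fr{s}_{\delta-e}$ and its conjugate; I would take $\eta_b\equiv 0$ (perturbed slightly for transversality) and a generic fiberwise metric $\{g_b\}$. The wall of metrics making the negative-square class $c_1(\fr{s}_{\delta-e})$ anti-self-dual has codimension $b^{+}=3>2=\dim S^2$, so $\{g_b\}$ can be taken off the wall, and by pushing it through the wall finitely often one can also arrange that $b\mapsto\langle c_1(\fr{s}_{\delta-e})\rangle_{g_b}\in\mathbf{K}$ has degree $0$; the corresponding map for the conjugate is its composition with $v\mapsto -v$, of opposite degree, so both winding numbers vanish. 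Lemma \ref{it-will-wind} then gives $\fsw_{\calx_\psi}(\fr{s}_{\delta-e})=\fsw_{\calx_\psi}(\fr{s}_{2e-\delta})$ (with Lemma \ref{fsw-diff} used to pass between data), and the chain of equalities closes. The step I expect to be the main obstacle is the bookkeeping of the middle two paragraphs — establishing rigorously that Kronheimer's relative disk count is the absolute $S^2$-family invariant of the clutched bundle, matching the boundary data under the clutching and checking vanishing of the winding numbers so that Lemmas \ref{fsw-diff} and \ref{it-will-wind} are legitimately available. (As the acknowledgements note, one can instead bypass the deflation maps $\alpha_\mu$ entirely and obtain the cancellation from the family blowup formula for Seiberg-Witten invariants applied to $\calx_\psi$.)
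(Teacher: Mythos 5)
Your proposal follows essentially the same route as the paper's proof: clutch the loop of diffeomorphisms into a bundle over $S^2$, identify $Q^{+}_{\delta-e}$ and $Q^{-}_{\delta-e}$ (the latter via the commutativity in Theorem \ref{mcduff-alpha}) with family Seiberg--Witten invariants of that same bundle for the spin$^{\cc}$ structures $\fr{s}_{\delta-e}$ and $\fr{s}_{2e-\delta}$, and cancel them using $-\fr{s}_{\delta-e}=\fr{s}_{2e-\delta}$ (as $K_{\tilde X}=e$) together with Lemmas \ref{fsw-diff} and \ref{it-will-wind}. The only cosmetic difference is your auxiliary family with $\eta_b\equiv 0$ for checking the winding-number hypotheses, where the paper simply uses the admissible families themselves.
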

\begin{proof} 
Suppose that there is a family of symplectomorphisms 
$f_t \colon (\tilde{X},\omega_t) \to (\tilde{X},\omega_{\lambda})$, $t \in \del D$. Then, via 
the clutching construction, $\left\{ f_t \right\}_{t \in \del D}$ gives rise to a fiber bundle:
\[
\caly \to B,\qquad B = D/\del D,\quad \caly = \calx \cup \tilde{X}/\sim,\ \text{where $(t,x) \sim f_t(x)$ for each $t \in \del D$ and $x \in \tilde{X}$.}
\]
Let $J$ be an $\omega$-compatible almost-complex structure on $\tilde{X}$, and let $g$ be the associated Hermitian metric. Set: $J_t = (f_t^{-1})_* \circ J\circ (f_t)_*$, $g_t = g \circ (f_t)_{*}$. There is a $g$-self-dual form $\eta$ on $\tilde{X}$ such that for each $t \in \del D$, 
\[
(f^{-1}_t)^{*} \eta_t = \eta, \quad 
\text{where $\eta_t$ is as in \eqref{eta-t-gamma}.}
\]
Choose a family $\left\{ g_b \right\}_{b \in D}$ of 
Riemannian metrics on $\tilde{X}$ that agree with $\left\{ g_t \right\}_{t \in \del D}$ at each $t \in \del D$, and choose then a family of $g_b$-self-dual forms $\left\{ \eta_b \right\}_{b \in D}$ 
that agree with $\left\{ \eta_t \right\}_{t \in \del D}$ at each $t \in \del D$. 
We may arrange that $\left\{ \eta_b \right\}_{b \in D}$ satisfies condition \eqref{admin}, i.e., 
\[
\int_X \langle \eta_b \rangle_{g_b} \wedge \omega + 
2 \pi \langle c_1(\fr{s}_{\varepsilon}) \rangle_{g_b} \wedge \omega < 0\quad \text{for all $b \in D$.}
\]
In short, this is done as follows: With $\eta_t$ as in \eqref{eta-t-gamma}, we choose $\rho$ large enough so that \eqref{admin} holds for all $t \in \del D$. For each $g_b$, the space of self-dual forms $\eta_b$ satisfying \eqref{admin} is non-empty and contractible. Therefore $\eta_t$, $t \in \del D$ has an extension $\eta_b$, $b \in D$ with the required property. See \cite[\S\,5]{Sm-2} for details. 
\smallskip%

Observe that we also get 
a family $\left\{ (g_b,\eta_b) \right\}_{b \in B}$ on $\caly$. This family has vanishing winding number, since $\eta_b$ satisfies \eqref{admin}. Choose an extension of $\fr{s}_{\delta - e}$ from $T_{\calx/D}$ to $T_{\caly/B}$. (We note that such an extension is not unique, since $H^2(B;\zz) \neq 0$; however, by Lemma \ref{wont-matter}, the family SW-invariants are independent of the choice of extension.) From definition of $Q_{\delta - e}$, we have
\[
Q_{\delta - e}(\left\{ \omega_t \right\}_{t \in S^1}) = 
\fsw_{(g_b, \eta_b)}(\fr{s}_{\delta - e}).
\]
On the other hand, if we apply $\alpha_{\mu}$, then we get:
\[
Q_{2\,e - \delta}(\left\{ \alpha_{\mu}(\omega_t) \right\}_{t \in S^1}) = 
\fsw_{(g'_b, \eta'_b)}(\fr{s}_{2\,e - \delta}),
\]
where $(g'_b, \eta'_b)$ are defined 
similarly to $(g_b, \eta_b)$, but using the loop $\alpha_{\mu}(\omega_t)$ instead of $\omega_t$. Because of Lemma \ref{fsw-diff},
\[
\fsw_{(g'_b, \eta'_b)}(\fr{s}_{\delta - e}) = 
\fsw_{(g_b, \eta_b)}(\fr{s}_{\delta - e}),
\]
and then, using Lemma \ref{it-will-wind} and \eqref{s-conj}, 
\[
\fsw_{(g'_b, \eta'_b)}(\fr{s}_{2\,e - \delta}) = 
\fsw_{(g'_b, \eta'_b)}(-\fr{s}_{2\,e - \delta}) \overset{\eqref{s-conj}}{=} \fsw_{(g'_b, \eta'_b)}(\fr{s}_{\delta - e}),\ 
\text{since $K^{*}_{\tilde{X}} = -e$;}
\]
we finally get:
\[
\fsw_{(g'_b, \eta'_b)}(\fr{s}_{2\,e - \delta}) = 
\fsw_{(g_b, \eta_b)}(\fr{s}_{a - \delta}).
\]
This completes the proof. This proof is similar to that of Lemma 3 in \cite{Sm-2}, but it uses negative inflation to establish the last equality.
\qed
\end{proof}
\smallskip%

It follows that $q_{\delta-e}$ gives a homomorphism:
\[
q_{\delta-e} \colon \pi_1 (S_{\kappa, \lambda}, \omega)/\pi_1 \diff(\tilde{X}) \cong K(\tilde{X},\omega_{\lambda}) \to \zz_2.
\]
Let $\Delta_{\kappa,\lambda}$ be the (possibly infinite) set of all $\delta$'s that satisfy \eqref{a's}. We may define a homomorphism
\[
q \colon K(\tilde{X},\omega_{\lambda}) 
\to 
\prod_{\delta \in \Delta_{\kappa,\lambda}} \zz_2\quad 
\text{by setting $q = \oplus_{\delta \in \Delta_{\kappa,\lambda}} q_{\delta - e}$}.
\]

\section{Period domains for complex tori}\label{tori}
Most of the material in this section is found 
in \cite{Shioda, Ent-Verbit-1}. See also \cite{Lange} for an in-depth introduction to the theory of complex tori. We consider a complex torus of dimension $2$
\[
X = \cc^2/L,
\]
$L$ being is a lattice in $\cc^2$. By the Kodaira classification theorem, every complex surface diffeomorphic to the real $4$-torus is also biholomorphic to such a quotient. Let $u^1, u^2, u^3, u^4$ be a basis in $H^1(X;\zz)$. Identifying $H^4(X;\zz)$ with $\zz$ through the natural orientation of $X$ as a complex manifold, we have 
\[
u^1 \cup u^2 \cup u^3 \cup u^4 = \text{$1$ or $-1$.}
\]
In the former case, the basis $u^i$ is said to be admissible. We identify the cohomology 
group $H^2(X;\zz)$ with the exterior square $\Lambda^2(L^{*})$ by sending an element $u^{i} \wedge u^{j}$ to the cohomology class 
$u^{i} \cup u^{j}$. The cup product pairing 
$H^2(X;\zz) \times H^2(X;\zz) \to \zz$ makes $H^2(X;\zz)$ into a Euclidean lattice, whose intersection form is even and has signature $(3,3)$. Letting $z_1,z_2$ be the coordinates on $\cc^2$, the differentials $\text{d}\,z_1, \text{d}\,z_2$ is a basis of holomorphic $1$-forms and 
$\phi_{X} = \text{d}\,z_1 \wedge \text{d}\,z_2$ is a holomorphic $2$-form on $X$. Such a holomorphic $2$-from is unique up to a scalar multiple. The class $[\phi_X] \in H^2(X;\cc)$ spans the subspace $H^{2,0}(X)$ of $H^2(X;\cc)$ and satisfies the relations:
\[
\langle [\phi_X], [\phi_X] \rangle = 0,\quad 
\langle [\phi_X], [\overline{\phi_X}] \rangle = 0.
\]
Every complex torus is K{\"a}hler. In fact, the K{\"a}hler cone $\calc(X)$ of $X$ is exactly one of the two connected components of the following set:
\[
\left\{ \kappa \in H^{1,1}(X;\rr)\ |\ 
\langle \kappa, \kappa \rangle > 0 \right\}.
\]
Indeed, we have a one-to-one correspondence between 
the real $(1,1)$-classes on $X$ and the real-valued $(1,1)$-forms on $\cc^2$ with constant coefficients. If $\omega$ is a $(1,1)$-form with constant coefficients, then it descends to a K{\"a}hler form on $X$ iff it tames the complex structure; i.e. $\omega(v, Jv) > 0$ for all $v \neq 0$. Together, all such forms sweep out exactly one of the two connected components of the set of all non-degenerate $(1,1)$-forms with constant coefficients. The other connected component is the set of all $(1,1)$-forms taming $-J$.
\medskip%

Given another complex torus $Y$ and an isomorphism $f^{*} \colon H^1(Y;\zz) \to H^1(X;\zz)$, we call $f^{*}$ an admissible isomorphism if it takes an admissible basis of $H^1(Y;\zz)$ to an admissible basis of $H^1(X;\zz)$. 
Because of the canonical isomorphism $H^2(X;\zz) = \Lambda^2 (H^1(X;\zz))$, any isomorphism $f^{*} \colon H^1(Y;\zz) \to H^1(X;\zz)$ induces an isomorphism:
\[
~^{\wedge}\!f^{*} \colon H^2(Y;\zz) \to H^2(X;\zz).
\]
Shioda \cite{Shioda} proved the Torelli theorem for complex $2$-tori. It is convenient for us to formulate his result as follows:
\begin{theorem}[Shioda]
Let $X$ and $Y$ be two complex tori of dimension 2, and assume that 
there is an admissible isomorphism $f^{*} \colon H^1(Y;\zz) \to H^1(X;\zz)$. In order that $f^{*}$ is induced by a biholomorphism 
$f \colon X \to Y$ it is necessary and sufficient that $~^{\wedge}\!f^{*}$ takes $H^{2,0}(Y)$ to $H^{2,0}(X)$.
\end{theorem}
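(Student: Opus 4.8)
The plan is to treat the two implications separately. For \emph{necessity}, suppose $f\colon X\to Y$ is a biholomorphism inducing $f^{*}$. Pullback on complex cohomology is simultaneously a ring homomorphism for the cup product and a morphism of Hodge structures (a holomorphic form pulls back to a holomorphic form), so under the canonical identification $H^{2}=\Lambda^{2}H^{1}$ the map ${}^{\wedge}\!f^{*}$ coincides with $f^{*}|_{H^{2}}$ and therefore carries $H^{2,0}(Y)$ isomorphically onto $H^{2,0}(X)$. (One also notes that a lift of $f$ to the universal covers is a $\cc$-linear isomorphism, hence orientation preserving for the complex orientations, so $f^{*}$ is automatically admissible, consistent with the standing hypothesis.)

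For \emph{sufficiency}, the single real point is to recover the Hodge structure on $H^{1}$ from the position of the line $H^{2,0}$ inside $H^{2}$; this is where complex dimension $2$ is essential. As recorded above, $H^{2,0}(X)=\Lambda^{2}H^{1,0}(X)$ is the line spanned by the \emph{decomposable} class $[\phi_{X}]=[\mathrm{d}z_{1}\wedge\mathrm{d}z_{2}]$, and in the four-dimensional space $H^{1}(X;\cc)$ the lines of $\Lambda^{2}H^{1}(X;\cc)$ spanned by a decomposable vector correspond bijectively to the two-planes of $H^{1}(X;\cc)$: for $0\neq v\wedge w$, the span of $v$ and $w$ is recovered as $\{u : u\wedge v\wedge w=0\}$, and the plane attached to $[\phi_{X}]$ is exactly $H^{1,0}(X)$. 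Now ${}^{\wedge}\!f^{*}$ — that is, the second exterior power of the $\cc$-linear extension $f^{*}_{\cc}$ — carries the line $H^{2,0}(Y)$ onto the line $H^{2,0}(X)$ by hypothesis; writing $[\phi_{Y}]=\eta_{1}\wedge\eta_{2}$ with $\{\eta_{1},\eta_{2}\}$ a basis of $H^{1,0}(Y)$, this says that $f^{*}_{\cc}\eta_{1}\wedge f^{*}_{\cc}\eta_{2}$ spans $\Lambda^{2}H^{1,0}(X)$, hence $f^{*}_{\cc}\eta_{1}$ and $f^{*}_{\cc}\eta_{2}$ both lie in $H^{1,0}(X)$ and, being linearly independent, form a basis of it. Thus $f^{*}_{\cc}\bigl(H^{1,0}(Y)\bigr)=H^{1,0}(X)$.

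It then remains to descend this to a biholomorphism, which is routine using the period picture. Write $X=V_{X}/L_{X}$ with $V_{X}=H^{0}(X,\Omega^{1}_{X})^{\vee}=H^{1,0}(X)^{\vee}$ and $L_{X}=H_{1}(X;\zz)$ embedded via the period functional $\ell\mapsto\bigl(\eta\mapsto\int_{\ell}\eta\bigr)$, and likewise for $Y$. The $\cc$-linear isomorphism $f^{*}_{\cc}|_{H^{1,0}(Y)}\colon H^{1,0}(Y)\to H^{1,0}(X)$ has a $\cc$-linear transpose, a $\cc$-linear isomorphism $g\colon V_{X}\to V_{Y}$; a short period computation identifies $g|_{L_{X}}$ with the integral transpose $(f^{*})^{\vee}\colon H_{1}(X;\zz)\to H_{1}(Y;\zz)$, so $g(L_{X})=L_{Y}$ and $g$ descends to a biholomorphism $f\colon X=V_{X}/L_{X}\to V_{Y}/L_{Y}=Y$. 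Since $f$ lifts to $g$, the map it induces on $H_{1}(X;\zz)$ is $(f^{*})^{\vee}$, hence the map it induces on $H^{1}$ is $(f^{*})^{\vee\vee}=f^{*}$, as required; the admissibility of $f^{*}$ plays no active role here (it is forced, since $g$ is $\cc$-linear) and never obstructs the construction.

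The main obstacle, and the only step that is not bookkeeping, is the recovery $f^{*}_{\cc}(H^{1,0}(Y))=H^{1,0}(X)$ of the previous paragraph: it rests on the fact that over a complex surface $H^{2}=\Lambda^{2}H^{1}$ and $H^{2,0}$ is spanned by a \emph{decomposable} class, so that prescribing the image of $H^{2,0}$ pins down the image of the Hodge plane $H^{1,0}\subset H^{1}$; everything downstream is the standard dictionary between complex tori and their period lattices.
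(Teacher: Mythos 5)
Your proof is correct. The paper itself offers no argument for this statement --- it is quoted from Shioda \cite{Shioda} as a known Torelli theorem --- so there is no in-text proof to compare against; your route is the standard one: since $H^{2,0}=\Lambda^{2}H^{1,0}$ is spanned by a decomposable class and a nonzero decomposable bivector in a four-dimensional space determines its two-plane, the hypothesis forces $f^{*}_{\cc}\bigl(H^{1,0}(Y)\bigr)=H^{1,0}(X)$, after which the weight-one dictionary between complex tori and their period lattices (Albanese description $X=H^{0}(\Omega^{1}_{X})^{\vee}/H_{1}(X;\zz)$) produces the biholomorphism. Your parenthetical remark that admissibility is then automatic is also right, since an isomorphism that is $\cc$-linear for the induced complex structures on $H^{1}(\cdot;\rr)$ is orientation preserving, and that orientation agrees with the one defining admissibility.
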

\medskip%

Let $E = \Lambda^2(L^{*})$ and $E_{\rr} = E \otimes \rr$, $E_{\cc} = E \otimes \cc$. Recalling the canonical identification 
\begin{equation}\label{identify-h}
E = H^2(X;\zz),\quad E_{\cc} = H^2(X;\cc),
\end{equation}
and letting $\langle \phantom{\cdot}, \phantom{\cdot} \rangle$ denote the cup product pairing on $E$, we introduce the period domain:
\[
\Phi= \left\{ \varphi \in E_{\cc}\ |\ 
\langle \varphi, \varphi \rangle = 0,\ \langle \varphi, \bar{\varphi} \rangle > 0 \right\}/\cc^{*} \subset \pp^{5},
\]
which is an open domain in a nonsingular quadric of dimension $4$. 
Under \eqref{identify-h}, the class $[\phi_X] \in H^2(X;\cc)$ gives a point in $\Phi$, called the period of $X$. More generally, if $X$ varies in a complex-analytic family
\[
p \colon \calx \to S,\quad X_{s} = p^{-1}(s),\ s \in S\quad\text{with $X = X_{s_0}$},
\]
and there is given an isomorphism $f^{*} \colon H^1(X_s;\zz) \to H^1(X_{s_0};\zz)$, then, by using 
the Hodge decomposition for each $s \in S$,
\[
H^2(X_s;\cc) = H^{2,0}(X_s;\cc) + 
H^{1,1}(X_s;\cc) + H^{0,2}(X_s;\cc),
\]
and the natural isomorphisms
\[
~^{\wedge}\!f^{*} \colon H^2(X_s;\cc) \cong H^2(X_{s_0};\cc) = E_{\cc},
\]
we define a period mapping 
$\tau_{S} \colon S \to \Phi$ by setting
\[
\tau_{S}(s) = ~^{\wedge}\!f^{*} \left( H^{2,0}(X_s) \right).
\]
Let now $L$ be an abstract abelian group of rank $4$, 
and let a basis $e_1, e_2, e_3, e_4$ in $L$ be fixed. Let $E$, $E_{\rr}$, and $E_{\cc}$ be as before. Let $R \subset \text{Hom}(L,\cc^2)$ be the (open) subset of all group homomorphisms $r \colon L \to \cc^2$ such that $\cc^2/r(L)$ is compact and such 
that $r(e_1),r(e_2),r(e_3),r(e_4)$ gives the positive orientation. Define 
\[
\calm = R/\sim,\quad \text{where $r \sim r'$ iff $r = A \circ r'$ for some $A \in \Gl(2,\cc)$};
\]
\[
\calx =  R \times \cc^2/\sim,\quad \text{where 
$(r,z) \sim (r',z')$ iff $r = A \circ r'$, $z = 
A\left(z' + r'(\ell)\right)$ for some $A \in \Gl(2,\cc)$, $\ell \in L$}.
\]
Forgetting $z$ gives a holomorphic fiber bundle $p \colon \calx \to \calm$ with fiber a complex $2$-torus. Each fiber 
$X_{b} = p^{-1}(b)$, $b \in \calm$ carries a canonical marking; the marking of $H^1(X_b;\zz)$ being given by the basis dual to $r(e_i)$. Then, using the natural isomorphism
\[
j^{*} \colon H^1(X_b;\zz) \cong L^{*},\quad 
~^{\wedge}\!j^{*} \colon H^2(X_b;\cc) \cong E_{\cc};
\]
we can globally define a period mapping: 
\[
\tau_{\calm} \colon \calm \to \Phi\quad \text{by setting $\tau_{\calm}(b) = ~^{\wedge}\!j^{*}\left( H^{2,0}(X_b) \right)$ for all $b \in \calm$.}
\]
In \cite{Shioda}, Shioda proves the following result:
\begin{theorem}[Shioda]
$\tau_{\calm} \colon \calm \to \Phi$ is a biholomorphism.
\end{theorem}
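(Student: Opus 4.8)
The plan is to show that $\tau_{\calm}$ is a holomorphic bijection and then upgrade it to a biholomorphism. The guiding observation is that $\tau_{\calm}$ is nothing but the Pl\"ucker form of the \emph{row space of the period matrix} construction, which makes holomorphy transparent and reduces bijectivity to linear algebra. In detail: for $r \in R$ let $\Pi_{r}$ be the $2\times 4$ complex matrix with columns $r(e_{1}), \dots, r(e_{4})$, and write $L^{*}_{\cc} = L^{*}\otimes\cc$, $L_{\rr} = L\otimes\rr$. If $x^{1}, \dots, x^{4}$ are the real coordinates on $\cc^{2}$ determined by $x^{i}(r(e_{j})) = \delta^{i}_{j}$, then $z_{k} = \sum_{i}(\Pi_{r})_{ki}\,x^{i}$, so $\exd z_{1}, \exd z_{2}$ span the row space $V_{r} \subset L^{*}_{\cc}$ of $\Pi_{r}$, and therefore
\[
H^{2,0}(X_{r}) = \Lambda^{2} H^{1,0}(X_{r}) = \Lambda^{2} V_{r} \subset \Lambda^{2} L^{*}_{\cc} = E_{\cc}.
\]
Consequently, identifying $\pp(E_{\cc}) = \pp^{5}$, the map $\tau_{\calm}$ factors as
\[
\calm = R/\Gl(2,\cc) \;\xrightarrow{\ [r]\,\mapsto\, V_{r}\ }\; \gr(2, L^{*}_{\cc}) \;\xrightarrow{\ \text{Pl\"ucker}\ }\; \pp(E_{\cc}).
\]

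Next I would invoke two standard facts. (i) The assignment sending a $2\times 4$ complex matrix of rank $2$ to its row space identifies the space of such matrices, modulo the free and proper left $\Gl(2,\cc)$-action, with $\gr(2, L^{*}_{\cc})$. (ii) The Pl\"ucker embedding $\gr(2, L^{*}_{\cc}) \hookrightarrow \pp(\Lambda^{2} L^{*}_{\cc}) = \pp^{5}$ is a biholomorphism onto the nonsingular quadric $\{\,[\varphi] : \langle \varphi, \varphi \rangle = 0\,\}$, because a $2$-vector in a $4$-dimensional space is decomposable precisely when it satisfies the single Pl\"ucker relation $\varphi \wedge \varphi = 0$. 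Granting these, $\tau_{\calm}$ is holomorphic (the two maps above are, and the minors of $\Pi_{r}$ are polynomial in $r$), and it is an \emph{open} embedding onto its image, since $R$ is an open, $\Gl(2,\cc)$-saturated subset of the rank-$2$ locus --- the conditions defining $R$, namely that $\Pi_{r}$ be $\rr$-invertible as a map $L_{\rr}\to\cc^{2}$ and positively oriented, are open. So the statement reduces to identifying the image of $\tau_{\calm}$ with $\Phi$.

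This identification is the heart of the matter. One must check that a $2$-plane $V \subset L^{*}_{\cc}$ equals $V_{r}$ for some $r \in R$ \emph{if and only if} its Pl\"ucker point $[\varphi]$ satisfies $\langle \varphi, \bar{\varphi}\rangle > 0$. Unwinding the definition of $R$: the requirement that $\Pi_{r}$ be invertible over $\rr$ (so that $r(L)$ is a genuine lattice) translates into $L^{*}_{\cc} = V \oplus \bar{V}$, which is equivalent to $\langle\varphi,\bar\varphi\rangle \neq 0$; and a direct computation of the Hermitian form $\varphi \mapsto \langle\varphi,\bar\varphi\rangle$ on the decomposable $2$-vectors shows that its sign records exactly which of the two orientations $\Pi_{r}$ induces on $\cc^{2}$, so that the positivity in the definition of $\Phi$ corresponds to the positive-orientation clause in the definition of $R$. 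I expect this orientation and sign bookkeeping to be the main obstacle: it is conceptually routine but requires fixing the conventions carefully --- an admissible basis so that $u^{1}\cup u^{2}\cup u^{3}\cup u^{4} = 1$, together with a choice of orientation of $\cc^{2}$ --- so that the two positivity conditions genuinely match rather than being interchanged. Once this is done, the image of $\tau_{\calm}$ is exactly $\Phi$.

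Combining the above, $\tau_{\calm}\colon\calm\to\Phi$ is an open embedding whose image is all of $\Phi$, hence a biholomorphism. Alternatively one may observe that $\tau_{\calm}$ is a holomorphic bijection of $4$-dimensional complex manifolds and appeal to the fact that such a map is automatically biholomorphic; and injectivity can instead be deduced from Shioda's Torelli theorem above, since equal periods make the marking an admissible isomorphism whose $\Lambda^{2}$ preserves $H^{2,0}$, hence is realized by a biholomorphism, which forces the corresponding points of $\calm$ to coincide.
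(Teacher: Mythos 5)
Your argument is correct, and it is worth noting that the paper gives no proof at all here: it simply cites Shioda's original paper, so there is no internal argument to compare against. What you supply is the standard direct proof, and it is sound: $\tau_{\calm}$ factors as the row-space map $R/\Gl(2,\cc)\to\gr(2,L^{*}_{\cc})$ followed by the Pl\"ucker embedding onto the quadric $\langle\varphi,\varphi\rangle=0$; the row-space map is an open embedding because $R$ is an open, saturated subset of the rank-$2$ locus; and the image is cut out by $\langle\varphi,\bar\varphi\rangle>0$ because $V\cap\bar V=0$ is equivalent to $\langle\varphi,\bar\varphi\rangle\neq 0$ (which is exactly the $\rr$-invertibility of the period matrix, i.e.\ discreteness and cocompactness of $r(L)$) while the sign records the orientation clause in the definition of $R$. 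The one step you flag but do not carry out, the sign bookkeeping, does work out as you predict: with $\varphi=\exd z_{1}\wedge \exd z_{2}$ one computes $\varphi\wedge\bar\varphi=4\,\exd x_{1}\wedge \exd y_{1}\wedge \exd x_{2}\wedge \exd y_{2}$, a positive multiple of the complex-orientation volume form, and the generator of $\Lambda^{4}L^{*}$ used to define the pairing on $E$ corresponds under the marking to the orientation determined by $r(e_{1}),\dots,r(e_{4})$, which the definition of $R$ declares positive; hence $\langle\varphi,\bar\varphi\rangle>0$ on the image and the two positivity conditions match rather than being interchanged. Your closing alternatives (injectivity via the Torelli theorem stated just above, or invoking that a holomorphic bijection of complex manifolds is biholomorphic) are also valid, though the open-embedding-plus-surjectivity route you take is the cleaner one since it does not presuppose the Torelli theorem.
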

Thus we may identify $\calm$ and $\Phi$. For abbreviation, we use $\phi_b$ for $\tau_{\calm}(b)$. One calls $\calm$ a fine moduli space of complex tori with prescribed marking. Observe that $\calx \to \calm$ has a canonical section, namely the zero-section. This will be useful later. 
\medskip%

Fix a class $\kappa \in E_{\rr}$ with 
$\langle \kappa, \kappa \rangle > 0$. 
Let $\calm_{\kappa} = \left\{ b \in \calm\ |\ \langle \kappa, \phi_b \rangle = 0 \right\}$. As is well known, $\calm_{\kappa}$ has two connected components, each being contractible. One of them, say 
$\calm_{\kappa}^{+}$, has the property that for each $b \in \calm_{\kappa}^{+}$, the class 
$\kappa \in H^2(X_b;\rr)$ is a K{\"a}hler class. 
(Here we identify $H^2(X_b;\rr)$ and $E_{\rr}$ 
via $~^{\wedge}\!j^{*}$.) One calls $\calm_{\kappa}^{+}$ a fine moduli space of $(\kappa-)$polarized complex tori with prescribed marking. In what follows, we drop the superscript in $\calm_{\kappa}^{+}$ and call it $\calm_{\kappa}$, forgetting about the other connected component.

\section{Generic tori} The following material is found in \cite{Lat-McD-Schl, Ent-Verbit-1}. A complex $2$-torus $X$ is said to generic if 
\[
\langle [\phi_X], a \rangle \neq 0\quad 
\text{for all $a \in H^2(X;\zz)$, $a \neq 0$.}
\]
The term \say{generic} is justified 
by the following fact. Let $\calm^{gen} \subset \calm$ be defined as follows:
\[
\calm^{gen} = \left\{ b \in \calm\ |\ 
\langle \phi_b, a \rangle \neq 0\ \text{for all $a \in E$, $a \neq 0$}
\right\}.
\]
Then for each $b \in \calm^{gen}$, $X_b$ is generic. Since $\calm^{gen}$ is a complement to a countable union of proper algebraic subvarieties, it is an open and dense subset of $\calm$.
\medskip%

We recall the following result of Demailly and Paun, which we state for the case of surfaces. See Theorem 0.1 in \cite{Dem-Paun} for the full result.
\begin{theorem}[Demailly-Paun, \cite{Dem-Paun}]\label{dem-pau}
Let $X$ be a compact K{\"a}hler surface. The K{\"a}hler cone of $\calc(X)$ of $X$ is one of the connected components of the set of all real $(1,1)$-classes 
$\kappa$ such that $\langle \kappa, \kappa \rangle > 0$ and such that 
$\langle \kappa, [D] \rangle > 0$ for every effective divisor $D$ on $X$.
\end{theorem}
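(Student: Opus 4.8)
The plan is to reproduce, in the surface case, the arguments of Buchdahl and Lamari that were later subsumed by Demailly--Paun \cite{Dem-Paun}. Write $P$ for the set of real $(1,1)$-classes $\kappa$ with $\langle\kappa,\kappa\rangle>0$ and $\langle\kappa,[D]\rangle>0$ for every effective divisor $D$. The first, elementary, direction is that $\calc(X)\subset P$: if $\omega$ is a K\"ahler form in the class $\kappa$, then $\langle\kappa,\kappa\rangle=\int_X\omega\wedge\omega>0$, and $\langle\kappa,[D]\rangle=\int_D\omega>0$ since $D$ is a nonempty compact complex curve. By the Hodge index theorem the cup-product form on $H^{1,1}(X;\rr)$ has signature $(1,h^{1,1}-1)$, so $\{\kappa:\langle\kappa,\kappa\rangle>0\}$ has exactly two connected components, each an open convex cone (a nappe of the light cone). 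Since $\calc(X)$ is open, convex and connected, it lies in one of them; intersecting that nappe with the defining half-spaces of $P$ gives a convex open cone $P^{+}\supset\calc(X)$.

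For the converse one must show $P^{+}\subset\calc(X)$. I would argue by a continuity/deformation method. Fix $\kappa_0\in\calc(X)$ and let $\kappa\in P^{+}$. Since $P^{+}$ is convex, the whole segment $\kappa_t=(1-t)\kappa_0+t\kappa$, $t\in[0,1]$, lies in $P^{+}$. Put $I=\{t\in[0,1]:\kappa_t\in\calc(X)\}$. Then $0\in I$, and $I$ is open because $\calc(X)$ is open; it remains to prove $I$ is closed. Take $t_j\nearrow t_*$ with K\"ahler forms $\omega_j$ representing $\kappa_{t_j}$. Their volumes $\int_X\omega_j\wedge\omega_0$ are uniformly bounded, so by weak compactness of positive currents a subsequence converges to a closed positive $(1,1)$-current $T$ with $[T]=\kappa_{t_*}$.

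The remaining step --- extracting a genuine K\"ahler form from $T$ --- is the real analytic input and the main obstacle. Two routes are available. \emph{(i)} Apply Demailly's regularization of closed positive currents to replace $T$ by a K\"ahler current (a strictly positive current with analytic singularities) in the class $\kappa_{t_*}$, and then exploit the surface structure: its singular locus is a divisor supported on finitely many irreducible curves $C_i$ whose intersection matrix is negative definite, and the constraints $\langle\kappa_{t_*},C_i\rangle>0$ coming from $P^{+}$ force the negative part to vanish, leaving a smooth K\"ahler representative. \emph{(ii)} Alternatively, establish directly an a priori estimate on the Monge--Amp\`ere potentials of the $\omega_j$ that forbids concentration of mass onto a curve, again using $\langle\kappa_{t_*},[D]\rangle>0$, in the spirit of the mass-concentration arguments of Demailly--Paun. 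Everything else --- the easy inclusion, the reduction to a segment, and the openness of $I$ --- is formal linear algebra (Hodge index) together with the openness and convexity of the K\"ahler cone; the closedness of $I$ is where $\partial\bar\partial$-pluripotential theory is genuinely needed.
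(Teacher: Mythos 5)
The paper does not actually prove this statement: it is quoted verbatim (in the surface case) from Demailly--Paun, with a pointer to Theorem 0.1 of \cite{Dem-Paun}, so there is no internal proof to compare against. Judged on its own terms, your proposal correctly reproduces the standard architecture of the Buchdahl--Lamari/Demailly--Paun argument: the easy inclusion $\calc(X)\subset P$, the Hodge index theorem giving the two nappes, the convexity of $P^{+}$ (an intersection of a convex nappe with half-spaces), and the open-closed continuity method along the segment $\kappa_t$. All of that is fine, and the reduction of the theorem to the single claim $P^{+}\subset\calc(X)$ is logically sound, since each of $P\cap(\pm\text{nappe})$ is convex and hence $P$ has at most two components.

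The genuine gap is that the analytic heart --- closedness of $I$, i.e.\ upgrading the weak limit current $T$ in the class $\kappa_{t_*}$ to a K\"ahler form --- is not proved but only signposted with ``two routes are available.'' Neither route is carried out, and as stated route (i) contains an unjustified assertion: after Demailly regularization and Siu decomposition $T=\sum_i\nu_i[C_i]+R$, there is no a priori reason the curves $C_i$ appearing in the singular divisorial part have negative definite intersection matrix, nor is it explained how the numerical conditions $\langle\kappa_{t_*},C_i\rangle>0$ let you ``remove'' that part while keeping a closed positive, let alone strictly positive, smooth representative. The actual arguments in the literature require real work here: Lamari's Hahn--Banach duality characterizing pseudoeffective classes, or the fact that $\kappa^2>0$ plus nefness forces bigness and then a Nakai--Moishezon-type statement for K\"ahler classes on surfaces, or Demailly--Paun's mass-concentration estimate. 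Since this is precisely the content of the theorem (everything else is linear algebra), the proposal is an accurate outline of the known proof rather than a proof; for the purposes of this paper the honest move is the one the author makes, namely to cite \cite{Dem-Paun} (or Buchdahl/Lamari for the surface case) rather than to rederive it.
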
 
\medskip%

Let again $\tilde{X}$ be a blowup of $X$, and let $e \in H^2(\tilde{X};\zz)$ be the class of the exceptional $(-1)$-curve. Letting $\langle e \rangle$ be the subspace of $H^{1,1}(\tilde{X};\rr)$ generated by $e$, we have a natural splitting $H^{1,1}(\tilde{X};\rr) = H^{1,1}(X;\rr) \oplus \langle e \rangle$. The following statement is found in \cite{Lat-McD-Schl}: 
\begin{lemma}\label{kahler-cone}
If $X$ is generic, then $\calc(\tilde{X})$ is one of the connected component of the following set:
\[
\left\{ \kappa - \lambda\,e 
\in H^{1,1}(\tilde{X};\rr)\ |\ 
\sqrt{\langle \kappa, \kappa \rangle} > \lambda > 0 \right\}.
\]
\end{lemma}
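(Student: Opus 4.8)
The plan is to read off the K\"ahler cone of $\tilde{X}$ from the Demailly--Paun criterion (Theorem \ref{dem-pau}); the only genuine geometric input needed is the list of effective divisors on $\tilde{X}$, and the rest is elementary bilinear algebra on $H^{1,1}(\tilde{X};\rr)$. So the first step is to observe that a generic torus carries no complex curves at all: if $C\subset X$ were an irreducible curve, its class $[C]\in H^2(X;\zz)$ would be nonzero (because $\langle[C],[\omega]\rangle=\int_C\omega>0$ for a K\"ahler form $\omega$) and of type $(1,1)$, hence would pair trivially with the $(2,0)$-class $[\phi_X]$, contradicting genericity. Consequently the only irreducible curve on $\tilde{X}$ is the exceptional curve $E$: an irreducible curve $C\subset\tilde{X}$ either maps to a point under $\pi$, in which case it lies in a fibre of $\pi$ and hence equals $E$ (the unique positive-dimensional fibre), or it maps onto a curve in $X$, which we have just excluded. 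Therefore the effective divisors on $\tilde{X}$ are precisely the multiples $mE$, $m\ge 0$, and $[E]=e$ with $\langle e,e\rangle=-1$.

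Next I would feed this into Theorem \ref{dem-pau}. Since $\langle\alpha,[mE]\rangle>0$ is equivalent to $\langle\alpha,e\rangle>0$ for every $m\ge 1$, the Demailly--Paun conditions on $\tilde{X}$ collapse to the single requirement $\langle\alpha,e\rangle>0$, so that
\[
\calc(\tilde{X})\ \text{is a connected component of}\quad \Omega:=\bigl\{\,\alpha\in H^{1,1}(\tilde{X};\rr)\ :\ \langle\alpha,\alpha\rangle>0,\ \langle\alpha,e\rangle>0\,\bigr\}.
\]
It then remains only to rewrite $\Omega$ in the stated form. Using the splitting $H^{1,1}(\tilde{X};\rr)=H^{1,1}(X;\rr)\oplus\langle e\rangle$, write $\alpha=\kappa-\lambda\,e$ with $\kappa\in H^{1,1}(X;\rr)$ and $\lambda\in\rr$. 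Since $\kappa$ is pulled back from $X$ we have $\langle\kappa,e\rangle=0$, and $\langle e,e\rangle=-1$, whence
\[
\langle\alpha,\alpha\rangle=\langle\kappa,\kappa\rangle-\lambda^{2},\qquad \langle\alpha,e\rangle=\lambda.
\]
Thus $\Omega=\{\kappa-\lambda\,e:\lambda>0,\ \langle\kappa,\kappa\rangle>\lambda^{2}\}=\{\kappa-\lambda\,e:\sqrt{\langle\kappa,\kappa\rangle}>\lambda>0\}$, which is exactly the set in the statement, and $\calc(\tilde{X})$ is one of its connected components, as claimed. If one wishes to identify which one, note that every genuine K\"ahler class on $\tilde{X}$ has the form $\pi^{*}[\omega]-\lambda\,e$ with $[\omega]\in\calc(X)$ and $\lambda>0$ small, so $\calc(\tilde{X})$ is the component whose $H^{1,1}(X;\rr)$-part lies in $\calc(X)$.

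The main obstacle is really just the computation of the effective cone --- the assertion that $E$ is the only irreducible curve on $\tilde{X}$, which rests on genericity of $X$; after that the argument is the two-line computation above. The remaining points to check are routine: Theorem \ref{dem-pau} applies because every complex torus, and hence its one-point blowup, is K\"ahler; and the conclusion is for the fixed complex structure on $X$, the other connected component of $\Omega$ being the K\"ahler cone of $\tilde{X}$ equipped with the conjugate complex structure.
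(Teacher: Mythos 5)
Your proof is correct and follows the same route as the paper, which simply invokes the Demailly--Paun criterion (Theorem \ref{dem-pau}) and cites \cite{Lat-McD-Schl, Ent-Verbit-1} for the details you spell out: genericity kills all curves on $X$, so the only irreducible curve on $\tilde{X}$ is the exceptional one, and the criterion reduces to $\langle\alpha,\alpha\rangle>0$ and $\langle\alpha,e\rangle>0$, i.e.\ $\sqrt{\langle\kappa,\kappa\rangle}>\lambda>0$. Your filled-in argument, including the identification of the correct component, is accurate.
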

\begin{proof}
This follows upon applying Theorem \ref{dem-pau}. See Proposition 3.3 in \cite{Lat-McD-Schl} and Theorem 8.6 in \cite{Ent-Verbit-1} for more general results. \qed
\end{proof}

\section{Proof of Theorem \ref{t:A}}\label{A:proof} 
The notation are as in \S\,\ref{tori}. Consider the 
pullback of $p \colon \calx \to \calm$, which we also denote by $p$, along the embedding $\calm_{\kappa} \subset \calm$. We set $X_b = p^{-1}(b)$, $b \in \calm_{\kappa}$. Let $Z \subset \calx$ be the zero-section of $p$, and let $\tilde{\calx}$ be the blowup of $\calx$ along $Z$. Let $\tilde{p} \colon \tilde{\calx} \to \calm_{\kappa}$ be the composition of the contraction $\tilde{\calx} \to \calx$ followed by the projection 
$p \colon \calx \to \calm_{\kappa}$. Then $\tilde{p} \colon \tilde{\calx} \to \calm_{\kappa}$ is a holomorphic fiber bundle, and for each $b \in \calm_{\kappa}$, the fiber $\tilde{X}_{b} = \tilde{p}^{-1}(b)$ is then the blowup of $X_b$ at the intersection point of $X_b$ and $Z$. Using the 
isomorphism 
\[
~^{\wedge}\!j^{*} \colon H^2(X_{b};\cc) \cong E_{\cc}
\] 
and the splitting 
\[
H^2(\tilde{X}_{b};\cc) = H^2(X_{b};\cc) \oplus \langle e \rangle,
\]
$e$ being the class dual to the exceptional $(-1)$-curve, we obtain a natural isomorphism 
\[
H^2(\tilde{X}_{b};\cc) \cong E_{\cc} \oplus \langle e \rangle.
\]
Let $\lambda > 0$ be so that 
$\langle \kappa, \kappa \rangle - \lambda^2 > 0$. Consider the class $\kappa - \lambda\,e \in E_{\rr} \oplus \langle e \rangle$. Recall that for all $b \in \calm_{\kappa}$, we have $\kappa - \lambda\,e \in H^{1,1}(\tilde{X}_b;\cc)$. Let $\calm_{\kappa,\lambda} \subset \calm_{\kappa}$ be those points $b \in \calm_{\kappa}$ such that $\kappa - \lambda\,e \in \calc(\tilde{X}_b)$. 
\begin{lemma}\label{gen-dense}
If $\kappa$ is non-resonant, then $\calm_{\kappa,\lambda}$ is open and dense in $\calm_{\kappa}$. Also, $\calm_{\kappa,\lambda}$ is connected.
\end{lemma}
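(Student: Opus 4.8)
\textbf{Proof proposal for Lemma \ref{gen-dense}.}

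The plan is to locate a single subset of $\calm_\kappa$ that does all the work at once: the locus $\calm_\kappa^{gen}:=\calm^{gen}\cap\calm_\kappa$ of generic polarized tori. First I would record that $\calm_\kappa^{gen}\subseteq\calm_{\kappa,\lambda}$. Indeed, if $b\in\calm_\kappa^{gen}$ then $X_b$ is generic, so Lemma \ref{kahler-cone} realizes $\calc(\tilde X_b)$ as a connected component of $\{\kappa'-\lambda'e : \kappa'\in H^{1,1}(X_b;\rr),\ \sqrt{\langle\kappa',\kappa'\rangle}>\lambda'>0\}$, namely the one whose $\kappa'$-part lies in $\calc(X_b)$. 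Since $b\in\calm_\kappa$ gives $\kappa\in\calc(X_b)$, and the standing hypothesis gives $0<\lambda<\sqrt{\langle\kappa,\kappa\rangle}$, the class $\kappa-\lambda e$ lies in that component; hence $b\in\calm_{\kappa,\lambda}$.

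The heart of the matter is to show $\calm_\kappa^{gen}$ is dense and connected in $\calm_\kappa$. Via the period map, $\calm_\kappa$ is a nonempty open subset, in the classical topology, of the quadric $Q_\kappa:=\{[\varphi]\in\pp^5 : \langle\varphi,\varphi\rangle=0=\langle\kappa,\varphi\rangle\}$, which — because $\langle\kappa,\kappa\rangle\neq0$ — is smooth and irreducible of dimension $3$; in particular $\calm_\kappa$ is a connected complex manifold. For nonzero $a\in E$ set $Z_a:=\{b\in\calm_\kappa : \langle\phi_b,a\rangle=0\}$, an analytic subset, so that $\calm_\kappa^{gen}=\calm_\kappa\setminus\bigcup_{0\neq a\in E}Z_a$. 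The key claim will be that every $Z_a$ is a \emph{proper} subvariety of $\calm_\kappa$. Being a nonempty open subset of the irreducible $Q_\kappa$, the set $\calm_\kappa$ is Zariski-dense in it; and since the pairing is nondegenerate on $\kappa^\perp\otimes\cc$, the affine cone over $Q_\kappa$ spans $\kappa^\perp\otimes\cc$ over $\cc$, hence so does $\{\phi_b : b\in\calm_\kappa\}$. Therefore $Z_a=\calm_\kappa$ would force $\langle a,\cdot\rangle$ to vanish identically on $\kappa^\perp\otimes\cc$, i.e. $a\in\cc\kappa$; but non-resonance rules this out, because if $\kappa$ were a real multiple of some nonzero integral $a_0$, then any nonzero $a_1\in E\cap a_0^\perp$ — which exists, the form on $E$ being nondegenerate, indeed of signature $(3,3)$ — would satisfy $\langle\kappa,a_1\rangle=0$, contradicting \eqref{k-irrational}. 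Thus $\bigcup_a Z_a$ is a countable union of proper analytic subvarieties of the connected complex manifold $\calm_\kappa$: it is meager, so $\calm_\kappa^{gen}$ is dense by the Baire category theorem, and each $Z_a$ having real codimension $\geq2$, the complement $\calm_\kappa^{gen}$ is path-connected.

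With this in hand the lemma follows formally. Since $\calm_{\kappa,\lambda}\supseteq\calm_\kappa^{gen}$ and $\calm_\kappa^{gen}$ is dense in $\calm_\kappa$, the set $\calm_{\kappa,\lambda}$ is dense in $\calm_\kappa$; and since $\calm_{\kappa,\lambda}$ contains the connected set $\calm_\kappa^{gen}$ as a subset that is dense \emph{in} $\calm_{\kappa,\lambda}$, the space $\calm_{\kappa,\lambda}$ is connected. For openness of $\calm_{\kappa,\lambda}$ in $\calm_\kappa$, I would invoke the Kodaira--Spencer stability of Kähler classes applied to the holomorphic family $\tilde p\colon\tilde\calx\to\calm_\kappa$: it shows that the set of $b$ for which the fixed class $\kappa-\lambda e$ is Kähler on $\tilde X_b$ is open.

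I expect the density step to be the main obstacle, and within it the precise way \eqref{k-irrational} is used: one must extract from non-resonance both that the period locus is large enough to span $\kappa^\perp\otimes\cc$ and that $\kappa$ is proportional to no nonzero integral class. Everything else — the inclusion via Lemma \ref{kahler-cone}, Baire category, the standard fact that deleting countably many proper analytic subvarieties from a connected complex manifold leaves it path-connected, and Kodaira--Spencer stability — is routine.
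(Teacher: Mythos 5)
Your proposal is correct and follows essentially the same route as the paper: the inclusion $\calm^{gen}_{\kappa}\subset\calm_{\kappa,\lambda}$ via Lemma \ref{kahler-cone}, density and connectedness of $\calm^{gen}_{\kappa}$ as the complement of countably many proper analytic hypersurfaces in the irreducible quadric section (with non-resonance ruling out $\kappa$ being proportional to an integral class), and Kodaira--Spencer stability for openness. The only difference is that you spell out in more detail why each $Z_a$ is proper (the spanning argument on $\kappa^{\perp}\otimes\cc$), which the paper leaves implicit.
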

\begin{proof}
Openness follows by the Kodaira-Spencer stability theorem. See \cite[\S\,6]{K-S}. 
Let us show that $\calm_{\kappa,\lambda} \subset \calm_{\kappa}$ is dense. Let again $\calm^{gen}_{\kappa} \subset \calm_{\kappa}$ be defined as
\[
\calm^{gen}_{\kappa} = \left\{ b \in \calm_{\kappa}\ |\ 
\langle \phi_b, a \rangle \neq 0\ \text{for all $a \in E$, $a \neq 0$}
\right\}.
\]
Recall the identification 
$\calm = \Phi \subset \pp^{5}$. 
With this identification made, $\calm_{\kappa}$ is the
hyperplane section of $\Phi$ given by 
the intersection with $\langle \kappa, \phi \rangle = 0$; $\calm^{gen}_{\kappa} \subset \calm_{\kappa}$ is the complement of the union of countably many hyperplanes given by $\langle a, \phi \rangle = 0$, $a \in E$. If $\kappa$ were (a multiple of) a rational class, then $\calm^{gen}_{\kappa}$ would be empty; otherwise, it must be open and dense in $\calm_{\kappa}$. However, if $\kappa$ is non-resonant, then it is not a real multiple of a rational class. Finally, by Lemma \ref{kahler-cone}, we have the inclusion $\calm^{gen}_{\kappa} \subset \calm_{\kappa,\lambda}$. 
\smallskip%

What remains is to show that 
$\calm_{\kappa,\lambda}$ is connected. Again, 
since $\calm_{\kappa}$ is irreducible and $\calm^{gen}_{\kappa} \subset \calm_{\kappa}$ is the complement to the union of countably many complex-analytic sets, it follows that $\calm^{gen}_{\kappa}$ is connected by piecewise smooth analytic arcs. 
Since 
$\calm_{\kappa,\lambda} \subset \calm_{\kappa}$ is open and 
$\calm^{gen}_{\kappa} \subset \calm_{\kappa,\lambda}$ is dense, the connectedness of $\calm_{\kappa,\lambda}$ follows. \qed 
\end{proof}
\smallskip%

Let us choose a smooth-varying family of K{\"a}hler forms $\omega_b \in H^{1,1}(\tilde{X}_b;\rr)$ representing the class $\kappa - \lambda\,e$. (Back in \S\,\ref{main}, we called it $\tilde{\Omega}_{b}$.) Fix a base point $b_0 \in \calm_{\kappa,\lambda}$. We may assume that $b_0 \in \calm^{gen}_{\kappa}$. Since $\calm_{\kappa}$ is contractible, we may choose a $C^{\infty}$-trivialization 
$f \colon \tilde{X}_{b_0} \times \calm_{\kappa} \to \tilde{\calx}$. Then $f^{*} \omega_b$ gives a family of symplectic forms on $\tilde{X}_{b_0}$ 
parametrized by $\calm_{\kappa,\lambda}$, and we obtain a monodromy map:
\begin{equation}\label{moses}
\pi_1(\calm_{\kappa,\lambda}) \to 
K(\tilde{X}_{b_0}, \omega_{b_0}).
\end{equation}
Abusing notation, we also write $\omega_b$ for $f^{*} \omega_b$. 
\medskip%

Let $\Delta_{\kappa,\lambda}$ be defined as follows: 
\[
\Delta_{\kappa,\lambda} = 
\left\{ \delta \in E\ |\ \langle \delta, \delta \rangle = 0,\ 0 < \langle \kappa, \delta \rangle \leq 
\lambda,\ \text{$\delta$ is indivisible} \right\}.
\]
We will use the following basic property of integral skew-symmetric quadratic form:
\begin{lemma}\label{skew-forms}
If $\delta$ is an integral, skew-symmetric quadratic form on $L \cong \zz^n$, 
then there is a basis 
$u_1, v_1, \dots, u_n,v_n$ for $L$ such that if 
$u^{i}, v^{i}$ is dual to $u_i,v_i$, then 
$\delta$ takes the forms:
\[
\delta = d_1\,u^1 \wedge v^1 + \cdots + d_n\,u^n \wedge v^n,\quad d_i \in \zz.
\]
\end{lemma}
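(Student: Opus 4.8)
The plan is to establish this via the classical theory of skew-symmetric bilinear forms over $\zz$, viewing $\delta \in \Lambda^2(L^*)$ equivalently as an alternating $\zz$-bilinear form on $L$. First I would recall the structure theory: an integral alternating form on a free $\zz$-module of rank $n$ can be brought, by a change of basis, to a block-diagonal normal form with $2\times 2$ blocks $\left(\begin{smallmatrix} 0 & d_i \\ -d_i & 0 \end{smallmatrix}\right)$ (and possibly zero blocks), where $d_1 \mid d_2 \mid \cdots$. This is the Frobenius normal form for alternating forms, a standard companion to the Smith normal form; I would cite it or, since the statement only asks for \emph{some} basis rather than a divisibility chain, give a short self-contained induction.

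The induction I have in mind runs as follows. If $\delta = 0$ the statement is trivial (take any basis, all $d_i = 0$). Otherwise pick $u_1, v_1 \in L$ with $\delta(u_1, v_1) = d_1$ the smallest positive value attained by $\delta$ on pairs of lattice vectors; a standard minimality argument shows $d_1$ divides $\delta(u_1, w)$ and $\delta(v_1, w)$ for every $w \in L$ (otherwise the division algorithm produces a smaller positive value). Using this divisibility, one replaces an arbitrary $w$ by $w - \tfrac{\delta(u_1,w)}{d_1} v_1 + \tfrac{\delta(v_1,w)}{d_1} u_1$, which is $\delta$-orthogonal to both $u_1$ and $v_1$; hence $L = \zz u_1 \oplus \zz v_1 \oplus L'$ with $L'$ the $\delta$-orthogonal complement of the plane $\zz u_1 \oplus \zz v_1$, and $L'$ is a direct summand, so free of rank $n-2$. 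Applying the induction hypothesis to $\delta|_{L'}$ yields $u_2, v_2, \ldots, u_n, v_n$, and concatenating gives the desired basis. Reading off the dual basis $u^i, v^i$, the form is $\delta = \sum_i d_i\, u^i \wedge v^i$ as claimed (with the convention that unpaired/zero directions contribute $d_i = 0$).

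The only genuinely delicate point is the splitting-off step: verifying that the $\delta$-orthogonal complement $L'$ of the rank-$2$ summand $\zz u_1 \oplus \zz v_1$ is itself a \emph{direct summand} of $L$ (equivalently, that the quotient $L/(\zz u_1 \oplus \zz v_1 \oplus L')$ is trivial and $L/L'$ is torsion-free). This is exactly where the divisibility of $\delta(u_1,\cdot)$ and $\delta(v_1,\cdot)$ by $d_1$ is used: it guarantees the projection $L \to \zz u_1 \oplus \zz v_1$ given by $w \mapsto \tfrac{\delta(v_1,w)}{d_1} u_1 - \tfrac{\delta(u_1,w)}{d_1} v_1$ is a well-defined $\zz$-linear retraction onto the summand, whose kernel is $L'$. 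Once this retraction is in hand the splitting is automatic. Everything else is bookkeeping. I would also remark that we do not actually need the divisibility chain $d_1 \mid d_2 \mid \cdots$ for the application, so the minimality argument can be run locally at each stage without tracking it globally.
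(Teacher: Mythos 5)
Your proof is correct: the minimal-positive-value argument, the divisibility of $\delta(u_1,\cdot)$ and $\delta(v_1,\cdot)$ by $d_1$, and the resulting idempotent retraction onto $\zz u_1 \oplus \zz v_1$ that splits off the hyperbolic plane are all sound, and the induction closes. The paper gives no proof of its own --- it cites the ``Riemann Conditions'' section of Griffiths--Harris, which establishes exactly this elementary-divisors normal form by the same argument --- so your write-up is essentially the standard proof the paper outsources (modulo the paper's harmless typo that $L$ should have rank $2n$, which your basis of $2n$ vectors implicitly corrects).
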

\begin{proof}
See Section called \say{The Riemann Conditions} in \cite[Ch.\,6]{G-H}. \qed
\end{proof}
\begin{lemma}\label{g-h-lemma}
If $\kappa$ is non-resonant, then 
$\Delta_{\kappa,\lambda}$ is infinite.
\end{lemma}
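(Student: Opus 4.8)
The plan is to exhibit infinitely many indivisible integral classes $\delta \in E = \Lambda^2(L^*)$ with $\langle \delta, \delta \rangle = 0$ and $0 < \langle \kappa, \delta \rangle \le \lambda$. I would start from the observation that $E$ has signature $(3,3)$, so the isotropic classes in $E_\rr$ form a large cone; the real content is to produce infinitely many \emph{integral} isotropic classes whose pairing with the irrational class $\kappa$ lands in the half-open interval $(0,\lambda]$, and to do so without hitting the resonance obstruction.

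Here is how I would organize the argument. First, fix a rank-$2$ primitive sublattice $P \subset L$ on which $\kappa$ restricts nontrivially; concretely, pick a basis $e_1,\dots,e_4$ adapted so that $P = \langle e_1, e_2 \rangle$ and consider the two-dimensional family of classes of the shape $\delta = u^1 \wedge w$ where $u^1$ is a fixed covector and $w$ ranges over a rank-$3$ complement. Any such $\delta$ is decomposable, hence automatically isotropic: $\langle u^1 \wedge w, u^1 \wedge w \rangle = 0$ because $u^1 \wedge w \wedge u^1 \wedge w = 0$. This gives a whole sublattice's worth of isotropic classes for free, which is the key structural fact (this is where Lemma \ref{skew-forms} is the moral input — an integral skew form of rank $\le 2$ is a single wedge $u^1 \wedge v^1$). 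Second, on this supply of isotropic classes, the functional $\delta \mapsto \langle \kappa, \delta \rangle$ is a nonzero linear form with \emph{irrational} (indeed, $\qq$-linearly generic, by non-resonance) coefficients, so its image on the integer points of a rank $\ge 2$ lattice of isotropic classes is dense in $\rr$; in particular it takes infinitely many distinct values in $(0,\lambda]$. Third, for each target value one chooses a representative and then divides out by its content to make it indivisible; since dividing only shrinks $|\langle \kappa, \delta\rangle|$, the condition $0 < \langle \kappa, \delta\rangle \le \lambda$ is preserved, and distinct values of the pairing force the resulting indivisible classes to be distinct, yielding infinitely many elements of $\Delta_{\kappa,\lambda}$.

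The step I expect to be the main obstacle is the second one: ensuring that the linear functional $\langle \kappa, \cdot \rangle$ restricted to the chosen lattice of decomposable isotropic classes is genuinely non-resonant in the sense needed (i.e.\ does not vanish, and its values are dense, rather than forming a discrete subgroup that might skip the interval $(0,\lambda]$ entirely). This requires checking that the sublattice of isotropic classes one writes down is rich enough that the hypothesis \eqref{k-irrational} — applied to the \emph{integral} classes inside it — really does preclude $\langle \kappa, \cdot\rangle$ from factoring through a rational rank-$1$ quotient on that sublattice. Concretely I would argue by contradiction: if the image were discrete, then $\kappa$ would pair rationally with all of a rank-$\ge 2$ integral sublattice, and combined with $\langle \kappa, \phi_b \rangle = 0$-type constraints this would force $\kappa$ to be proportional to a rational class, contradicting non-resonance. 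Once density in $\rr$ is in hand, everything else (choosing representatives in $(0,\lambda]$, taking indivisible parts, counting) is routine bookkeeping with the lattice $E$.
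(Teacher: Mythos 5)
Your argument is correct and essentially the paper's own: both exhibit a totally isotropic integral sublattice of rank at least two (the paper extracts a second isotropic class orthogonal to the first via Lemma \ref{skew-forms}, you take the decomposable classes $u^1 \wedge w$) and then use non-resonance to rule out the image of $\langle \kappa, \cdot \rangle$ on that sublattice being a cyclic subgroup of $\rr$, so that it is dense and hits $(0,\lambda]$ infinitely often. The only imprecision is the claim that distinct pairing values force distinct indivisible parts (false for $\delta_0$ versus $2\delta_0$), but since a single primitive class accounts for only finitely many pairing values in $(0,\lambda]$, the infinitude of $\Delta_{\kappa,\lambda}$ follows anyway.
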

\begin{proof}
It suffices to show for any 
$\lambda'$ with $0 < \lambda' < \lambda$ there exists 
$\delta \in E$ such that $\langle \delta, \delta \rangle = 0$, and $0 < \langle \kappa, \delta \rangle < \lambda'$. To begin with, if $\kappa$ is non-resonant, then we do automatically have that $\langle \kappa, \delta \rangle \neq 0$. Pick $\delta_1 \in E$ such that 
$\langle \delta_1, \delta_1 \rangle = 0$. By Lemma \ref{skew-forms}, there exists $\delta_2 \in E$ such that $\langle \delta_2, \delta_2 \rangle = 0$, and also $\langle \delta_1, \delta_2 \rangle = 0$. It follows then that 
\[
\langle p_1 \delta_1 + p_2 \delta_2, p_1 \delta_1 + p_2 \delta_2 \rangle = 0,
\]
and, since $\kappa$ is non-resonant, 
\[
\langle \kappa, p_1 \delta_1 + p_2 \delta_2 \rangle \neq 0\quad \text{for all $p_1,p_2 \in \zz$, unless $p_1 = p_2 = 0$.}
\]
Be given $\lambda' > 0$, we can always find 
integers $p_1$ and $p_2$ such that 
$0 < \langle \kappa, p_1 \delta_1 + p_2 \delta_2 \rangle < \lambda'$. \qed
\end{proof}
\medskip%

Define 
\[
\cald_{\lambda} = \left\{ b \in \calm_{\kappa}\ |\ 
\langle \phi_b, \delta \rangle \neq 0\quad 
\text{for all $\delta \in \Delta_{\kappa,\lambda}$}
\right\}.
\]
Identify, as usual, $\calm_{\kappa}$ with a hyperplane section $\Phi$. For $\delta \in \Delta_{\kappa,\lambda}$, let $H_{\delta}$ be defined as follows:
\[
H_{\delta} = \left\{ \phi \in \Phi\ |\ 
\langle \delta, \phi \rangle = 0,\ \langle \kappa, \phi  \rangle = 0 \right\}.
\]
With this identification made, $\cald_{\lambda} \subset \calm_{\kappa}$ becomes the 
complement to the union of all $H_{\delta}$'s.
\begin{lemma}\label{-1-torus}
$\calm_{\kappa,\lambda} \subset \cald_{\lambda}$.
\end{lemma}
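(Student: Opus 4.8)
The plan is to argue by contradiction. Suppose some $b \in \calm_{\kappa,\lambda}$ did not lie in $\cald_{\lambda}$; then there would be a class $\delta \in \Delta_{\kappa,\lambda}$ with $\langle \phi_b,\delta\rangle = 0$. From such a $\delta$ I will build an effective divisor on $\tilde{X}_b$ against which $\kappa - \lambda\,e$ pairs non-positively; by the Demailly--Paun description of the K\"ahler cone (Theorem \ref{dem-pau}) this forces $\kappa - \lambda\,e \notin \calc(\tilde{X}_b)$, contradicting $b \in \calm_{\kappa,\lambda}$.

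First I would note that $\langle \phi_b,\delta\rangle = 0$, together with $\delta$ being real (so that $\langle \phi_b,\delta\rangle = 0$ implies $\langle \overline{\phi_b},\delta\rangle = 0$ as well) and $\langle \phi_b,\overline{\phi_b}\rangle > 0$, forces $\delta$ to be orthogonal to $H^{2,0}(X_b) \oplus H^{0,2}(X_b)$, hence $\delta \in H^{1,1}(X_b;\rr)$; being integral, $\delta$ lies in the N\'eron--Severi lattice of $X_b$. Now view $\delta$ as an integral alternating form on $L$. It is nonzero (because $\langle\kappa,\delta\rangle > 0$) and satisfies $\langle\delta,\delta\rangle = 0$, so its rank as an alternating form is exactly $2$ (the rank is even, it is not $0$ since $\delta \neq 0$, and it is not $4$ since $\langle \delta,\delta\rangle = 0$). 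Combining Lemma \ref{skew-forms} with the indivisibility of $\delta$ gives a basis $u_1,v_1,u_2,v_2$ of $L$ for which $\delta = u^1 \wedge v^1$. Its radical is then the rational plane $V_0 = \langle u_2,v_2\rangle_{\rr}$, and since $\delta$ is of type $(1,1)$ — that is, $\delta(Jx,Jy) = \delta(x,y)$ — this radical is $J$-invariant, so $V_0$ is a complex line in $\cc^2$ spanned by lattice vectors. Hence $\mathbf{E} := V_0/(V_0 \cap L)$ is a smooth elliptic curve in $X_b$ passing through the origin, and, realizing $\mathbf{E}$ as a fibre of the quotient fibration $X_b \to X_b/\mathbf{E}$ (an elliptic curve, since $V_0 \cap L$ is a primitive rank-$2$ sublattice), one computes $[\mathbf{E}] = \pm\delta$; the sign is $+$ because $\mathbf{E}$ is a holomorphic curve, so $\langle\kappa,[\mathbf{E}]\rangle > 0$, matching $\langle\kappa,\delta\rangle > 0$. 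I would cite \cite{Lange} for these standard facts about subtori.

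Since the centre of the blowup $\tilde{\calx} \to \calx$ is the zero-section, the blown-up point of $X_b$ is the origin, which lies on $\mathbf{E}$; as $\mathbf{E}$ is smooth there, its strict transform $\tilde{\mathbf{E}} \subset \tilde{X}_b$ is an effective divisor in the class $\delta - e$. Using $\langle e,e\rangle = -1$, the orthogonality of $e$ to $H^2(X_b;\zz)$, and $0 < \langle\kappa,\delta\rangle \le \lambda$, I get
\[
\langle \kappa - \lambda\,e,\ \delta - e\rangle = \langle\kappa,\delta\rangle - \lambda \le 0 .
\]
By Theorem \ref{dem-pau} every class in $\calc(\tilde{X}_b)$ has strictly positive pairing with every effective divisor, so $\kappa - \lambda\,e \notin \calc(\tilde{X}_b)$, i.e.\ $b \notin \calm_{\kappa,\lambda}$ — the desired contradiction.

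The only substantial step is the middle one: extracting an honest effective divisor representing the exact class $\delta$ from the numerical data ($\delta$ indivisible and of type $(1,1)$, with $\langle\delta,\delta\rangle = 0$ and $\langle\kappa,\delta\rangle > 0$). This is where the torus hypothesis is used essentially — on a complex $2$-torus the Euler characteristic of the line bundle with $c_1 = \delta$ equals $\tfrac12\langle\delta,\delta\rangle = 0$, so effectivity is not handed to us by Riemann--Roch the way it would be for a $K3$ surface, and one genuinely has to identify $\delta$ with the class of the subtorus $\mathbf{E}$. Everything else is bookkeeping with the intersection form together with the appeal to Demailly--Paun.
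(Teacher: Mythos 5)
Your proof is correct and follows essentially the same route as the paper: view $\delta$ as a rank-two integral alternating $(1,1)$-form, take the complex line given by its kernel to produce a subtorus $\mathbf{E}$ with $[\mathbf{E}]=\pm\delta$, fix the sign by pairing with $\kappa$, pass to the strict transform in class $\delta-e$, and conclude via $\langle\kappa-\lambda e,\delta-e\rangle\le 0$ that $\kappa-\lambda e$ is not K\"ahler. Your write-up is in fact slightly more careful than the paper's at two points it leaves implicit — deducing $\delta\in H^{1,1}(X_b;\rr)$ from $\langle\phi_b,\delta\rangle=0$, and noting that the blown-up point (the origin) automatically lies on the subtorus so the strict transform really represents $\delta-e$.
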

\begin{proof}
To begin with, we show that if $X = \cc^2/L$ is a complex $2$-torus, and $\delta \in H^2(X;\zz)$ is an \emph{indivisible} class such that 
\[
\delta \in H^{1,1}(X;\rr),\quad \langle \delta, \delta \rangle = 0;
\]
then there exists a smooth irreducible elliptic curve $D \subset X$ such that $[D] = \pm \delta$. Here $[D] \in H^2(X;\zz)$ is dual to $D$. Indeed, we may view $\delta$ as an integral (with respect to the lattice $L$), skew-symmetric $(1,1)$-form on $\cc^2$. By Lemma \ref{skew-forms}, there is a basis 
$u_1, v_1, u_2,v_2$ for $L$ such that if 
$u^i, v^i$ is dual to $u_i, v_i$, then $\delta$ takes the form:
\[
\delta = d_1\,u^1 \wedge v^1 + d_2\,u^2 \wedge v^2,\quad d_1, d_2 \in \zz,
\]
and we have that:
\[
\langle \delta, \delta \rangle = 2 d_1 d_2 = 0.
\]
We may assume that $d_2 = 0$. Because $\delta$ is indivisible, we may also assume that $d_1 = 1$. It clear that the subspace $\text{ker}\,\delta \subset \cc^2$ is of real dimension $2$. Also, $\text{ker}\,\delta$ is a complex subspace, since $\delta$ is of type $(1,1)$. But then $\text{ker}\,\delta$ descends to a complex subtorus 
$D$ of $X$. Let us view $D$ as a subgroup of $X$ and set $Y = X/D$. The fibering $\vartheta \colon X \to Y$ (given by the quotient map $X \to X/D$) is holomorphic and has fibers homologous to the torus $D$. Let $\delta_{Y} \in H^2(Y;\zz)$ be defined by 
\[
\int_{Y} \delta_{Y} = 1\quad \text{with respect to the orientation of $Y$ as a complex manifold.}
\]
Then it clear that $[D] = \vartheta^{*} \delta_{Y}$. On the other hand, it is also clear that
\[
\vartheta^{*} \delta_{Y} = \text{$-\delta$ or $+\delta$.}
\]
In the latter case, we also have $\langle \delta , \kappa \rangle > 0$ for any K{\"a}hler class 
$\kappa$ of $X$.
\smallskip%

Observe that if $\tilde{X} \to X$ is the blowup of $X$ at a point, then $[D] - e \in H^{1,1}(\tilde{X};\zz)$ is effective, being represented by a smooth irreducible elliptic curve of self-intersection number $(-1)$.
\smallskip%

Returning to $\cald_{\lambda}$, let us pick 
$b \in \calm_{\kappa}$ and $\delta \in \Delta_{\kappa, \lambda}$ so that $\langle \varphi_b, \delta \rangle = 0$. As is shown above, either $\delta - e$ or 
$-\delta - e$ is effective in $\tilde{X}_b$; in fact, it must be $\delta - e$ that is effective, because we have $\langle \kappa, \delta \rangle > 0$. But then $\kappa - \lambda\, e$ is not a K{\"a}hler class of $\tilde{X}_b$, because we also have
\[
\langle \kappa - \lambda\,e, \delta - e \rangle = 
\langle \kappa, \delta \rangle - \lambda \leq 0.
\]
Hence, $b \not\in \calm_{\kappa, \lambda}$. \qed
\end{proof}
\medskip%

Even though 
$\Delta_{\kappa,\lambda}$ is infinite, we can show that $\calm_{\kappa}$ is a manifold; namely, let us prove:
\begin{lemma}\label{finite-h-delta}
Each $\phi \in \calm_{\kappa}$ has a neighbourhood $U$ such that $U \cap H_{\delta} = \emptyset$ for all but finitely many $\delta \in \Delta_{\kappa,\lambda}$. In particular, $\cald_{\lambda}$ is an open subset of $\calm_{\kappa}$.
\end{lemma}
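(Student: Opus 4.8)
The plan is to confine, for each $\phi\in\calm_\kappa$, the classes $\delta$ for which $H_\delta$ can pass near $\phi$ to a bounded region of the lattice $E$; since $E$ is discrete this leaves only finitely many relevant $\delta$, and openness of $\cald_\lambda$ follows formally. The geometric input is that for every $\phi\in\calm_\kappa$ the class $\kappa$ together with $\phi$ spans a positive-definite subspace. Concretely, write $\phi=x+iy$ with $x,y\in E_\rr$; the relations $\langle\phi,\phi\rangle=0$, $\langle\phi,\bar\phi\rangle>0$ give $\langle x,x\rangle=\langle y,y\rangle>0$ and $\langle x,y\rangle=0$, and $\langle\kappa,\phi\rangle=0$ gives $\langle\kappa,x\rangle=\langle\kappa,y\rangle=0$, so $V_\phi:=\rr\kappa\oplus\rr x\oplus\rr y$ is a positive-definite $3$-plane. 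Since the intersection form on $E_\rr$ has signature $(3,3)$, the complement $W_\phi:=V_\phi^{\perp}$ is a negative-definite $3$-plane and $E_\rr=V_\phi\oplus W_\phi$; both subspaces, and the corresponding orthogonal projections, vary continuously with $\phi\in\calm_\kappa$.

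The main step is the estimate. Suppose $\delta\in E$ with $\langle\delta,\delta\rangle=0$ and $0<\langle\kappa,\delta\rangle\le\lambda$, and suppose $\phi\in H_\delta$. From $\langle\delta,\phi\rangle=0$ we get $\delta\perp x$ and $\delta\perp y$, hence $\delta\in(\rr x\oplus\rr y)^{\perp}=\rr\kappa\oplus W_\phi$; write $\delta=c\kappa+w$ with $w\in W_\phi$, so $c=\langle\kappa,\delta\rangle/\langle\kappa,\kappa\rangle\in(0,\lambda/\langle\kappa,\kappa\rangle]$. Substituting into $\langle\delta,\delta\rangle=0$ gives $-\langle w,w\rangle=c^{2}\langle\kappa,\kappa\rangle\le\lambda^{2}/\langle\kappa,\kappa\rangle$. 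Thus, fixing a Euclidean norm $|\cdot|$ on $E_\rr$, one has $|c\kappa|\le\lambda|\kappa|/\langle\kappa,\kappa\rangle$, and $|w|^{2}\le\lambda^{2}/(c_{0}\langle\kappa,\kappa\rangle)$ whenever $c_{0}>0$ satisfies $-\langle w,w\rangle\ge c_{0}|w|^{2}$ on $W_\phi$. Now fix $\phi_{0}\in\calm_\kappa$ and a compact neighbourhood $\overline U\subset\calm_\kappa$: by continuity of $W_\phi$ and negative-definiteness of the form on it, a single $c_{0}>0$ works for all $\phi\in\overline U$, and hence $|\delta|\le R$ for a constant $R=R(\kappa,\lambda,\overline U)$ whenever $\langle\delta,\delta\rangle=0$, $0<\langle\kappa,\delta\rangle\le\lambda$, and $H_\delta\cap\overline U\neq\emptyset$. (Equivalently: on the signature-$(1,3)$ space $\rr\kappa\oplus W_\phi$ the class $\delta$ lies on the light cone inside the slab $0<\langle\kappa,\cdot\rangle\le\lambda$, a compact set depending continuously on $\phi$.)

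Since only finitely many $\delta\in E$ satisfy $|\delta|\le R$, taking $U=\mathrm{int}\,\overline U$ we get $U\cap H_\delta=\emptyset$ for all but finitely many $\delta\in\Delta_{\kappa,\lambda}$, which is the first assertion. For the second, each $H_\delta$ is closed in $\calm_\kappa$ (cut out by one further linear equation on the ambient $\pp^{5}$), so by the local finiteness just proved $\bigcup_{\delta\in\Delta_{\kappa,\lambda}}H_\delta$ is closed in $\calm_\kappa$ and $\cald_\lambda=\calm_\kappa\setminus\bigcup_\delta H_\delta$ is open. The one point needing care is the uniformity in the last step: for a single $\phi$ the estimate already shows that only finitely many $H_\delta$ contain $\phi$, but the lemma demands a whole neighbourhood, so one must know the negative-definiteness constant $c_{0}$ is uniform on a compact neighbourhood — which it is, by continuity of the splitting $E_\rr=V_\phi\oplus W_\phi$ in $\phi$. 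No deeper obstacle arises; note in particular that indivisibility of $\delta$ plays no role, only the isotropy $\langle\delta,\delta\rangle=0$ together with the bound on $\langle\kappa,\delta\rangle$.
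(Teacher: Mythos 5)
Your proof is correct and rests on exactly the same geometric fact as the paper's: the classes $\kappa$, $\mathrm{Re}\,\phi$, $\mathrm{Im}\,\phi$ span a positive-definite $3$-plane in the signature-$(3,3)$ space $E_\rr$, so its orthogonal complement is negative definite and cannot accommodate large isotropic vectors with bounded $\kappa$-pairing. The paper packages this as a contradiction via a normalized limit $\delta_i/\|\delta_i\|\to\delta_\infty$, whereas you make the same estimate explicit and uniform over a compact neighbourhood; this is a presentational difference only.
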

\begin{proof}
The proof is similar to that of Lemma 5 in \cite{Sm-2}, but we give a sketch for the sake of completeness. To begin with, we set $x_1 = \text{Re}\, \phi$, $x_2 = \text{Im}\, \phi$. Fix some euclidean norm $||\hphantom{x}||$ on $E_{\rr}$. It is clear that any 
$||\hphantom{x}||$-ball contains only finitely many 
elements of $\Delta_{\kappa,\lambda}$. Suppose, 
contrary to our claim, that there is an unbounded sequence $\left\{ \delta_i \right\}_{k=1}^{\infty}$ such that
\begin{center}
0 < $\langle \delta_i, \kappa \rangle \leq \lambda$, and also that $||\delta_i|| \to \infty$ and $\left( \delta_i, x_1 \right), \left( \delta_i, x_2 \right) \to 0
$ as $i \to \infty$.    
\end{center}
Assuming, as we may, that 
$\left\{ \left\{ \delta_i \right\}/||\delta_i|| \right\}_{i=1}^{\infty} 
\to \delta_{\infty} \in E_{\rr}$ as $i \to \infty$, we obtain four pairwise orthogonal non-zero vectors 
$(\delta_{\infty},\kappa,x_1,x_2)$ such that
\[
\delta_{\infty}^{2} = 0\quad\text{and}\quad\kappa^2 > 0,\quad x_1^2 > 0,\quad x_2^2 > 0.
\]
Such a configuration of vectors, however, is not realizable in the space of signature $(3,3)$. \qed
\end{proof}
\medskip%

Having established the open and dense inclusions $
\calm_{\kappa,\lambda} \subset \cald_{\lambda} \subset \calm_{\kappa},
$ we claim:
\begin{lemma}\label{surjective}
The inclusion induced homomorphism 
$\pi_1(\calm_{\kappa,\lambda}, b_0) \to 
\pi_1(\cald_{\lambda},b_0)$ is surjective.
\end{lemma}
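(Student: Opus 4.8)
The plan is to wedge $\calm_{\kappa,\lambda}$ between $\calm^{gen}_{\kappa}$ and $\cald_{\lambda}$ and to exploit that $\calm^{gen}_{\kappa}$ is obtained from the complex manifold $\cald_{\lambda}$ (Lemma~\ref{finite-h-delta}) by deleting a countable union of proper complex-analytic subsets. Recall from the proof of Lemma~\ref{gen-dense} that $\calm^{gen}_{\kappa} \subset \calm_{\kappa,\lambda}$, from Lemma~\ref{-1-torus} that $\calm_{\kappa,\lambda} \subset \cald_{\lambda}$, and that the base point satisfies $b_0 \in \calm^{gen}_{\kappa}$; moreover all three spaces are connected ($\calm^{gen}_{\kappa}$ by the proof of Lemma~\ref{gen-dense}, hence also $\calm_{\kappa,\lambda}$ and $\cald_{\lambda}$, since both contain $\calm^{gen}_{\kappa}$ as a dense subset). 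Consequently it is enough to show that the composite $\pi_1(\calm^{gen}_{\kappa},b_0) \to \pi_1(\calm_{\kappa,\lambda},b_0) \to \pi_1(\cald_{\lambda},b_0)$ is surjective, for then the second arrow is surjective as well.

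First I would pin down the complement $\cald_{\lambda} \setminus \calm^{gen}_{\kappa}$. Under the identification $\calm_{\kappa} \subset \Phi \subset \pp^{5}$ this set equals $\cald_{\lambda} \cap \bigcup_{a \in E,\, a \neq 0} H'_{a}$, where $H'_{a} = \{\phi : \langle a, \phi \rangle = 0\}$ is a hyperplane slice. Since $\kappa$ is non-resonant the set $\calm^{gen}_{\kappa}$ is non-empty, and since $\calm_{\kappa}$ is irreducible no $H'_{a}$ contains it; hence each $H'_{a} \cap \cald_{\lambda}$ is a proper closed complex-analytic subset of $\cald_{\lambda}$, of complex codimension at least $1$. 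Stratifying each of these by the smooth loci of its successive singular subvarieties exhibits $\cald_{\lambda} \setminus \calm^{gen}_{\kappa}$ as contained in a countable union $\bigcup_{i \in \nn} S_{i}$ of locally closed complex submanifolds of $\cald_{\lambda}$, each of real codimension at least $2$ (each $S_{i}$ has real dimension at most $4$, whereas $\cald_{\lambda}$ has real dimension $6$).

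The heart of the argument is a general-position step. Let $\gamma \colon [0,1] \to \cald_{\lambda}$ be a loop based at $b_0$; since $b_0 \in \calm^{gen}_{\kappa}$ and $\calm^{gen}_{\kappa}$ is open, $\gamma$ already avoids $\bigcup_{i} S_{i}$ near the endpoints. Applying Thom's transversality theorem (in its relative form) to each submanifold $S_{i}$, and intersecting the resulting residual subsets over $i \in \nn$ by the Baire category theorem, I obtain a smooth loop $\gamma'$ that is $C^{0}$-close to $\gamma$, agrees with $\gamma$ near $\{0,1\}$, and is transverse to every $S_{i}$. Because a $1$-dimensional domain mapped transversally into a submanifold of real codimension at least $2$ misses that submanifold entirely, $\gamma'([0,1]) \cap S_{i} = \emptyset$ for all $i$, so $\gamma'$ takes values in $\calm^{gen}_{\kappa} \subset \calm_{\kappa,\lambda}$. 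Choosing the perturbation small enough that the straight-line homotopy (read off in coordinate charts) from $\gamma$ to $\gamma'$ stays inside the open manifold $\cald_{\lambda}$, one gets $[\gamma] = [\gamma']$ in $\pi_1(\cald_{\lambda},b_0)$, and $[\gamma']$ is visibly in the image of $\pi_1(\calm_{\kappa,\lambda},b_0)$. This yields surjectivity.

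The main obstacle — and the reason for routing the argument through $\calm^{gen}_{\kappa}$ — is that $\calm_{\kappa,\lambda}$ is only known to be open (Lemma~\ref{gen-dense}) and is not presented as the complement of an analytic set, so the principle ``deleting a thin set preserves surjectivity on $\pi_1$'' cannot be applied to it directly; the sandwich $\calm^{gen}_{\kappa} \subset \calm_{\kappa,\lambda} \subset \cald_{\lambda}$ is exactly what sidesteps this. A secondary subtlety is that the homotopy between $\gamma$ and $\gamma'$ must be kept inside $\cald_{\lambda}$, not merely inside $\calm_{\kappa}$; this is arranged by taking the perturbation $C^{0}$-small with respect to the open set $\cald_{\lambda}$.
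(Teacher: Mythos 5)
Your proposal is correct and follows exactly the paper's route: the paper also bases everything on the sandwich $\calm^{gen}_{\kappa} \subset \calm_{\kappa,\lambda} \subset \cald_{\lambda}$ with $b_0 \in \calm^{gen}_{\kappa}$, and disposes of the lemma by $C^0$-approximating a based loop in $\cald_{\lambda}$ by one lying in $\calm^{gen}_{\kappa}$. You have merely spelled out the transversality/Baire-category details that the paper leaves implicit in the phrase ``can be $C^0$-approximated.''
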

\begin{proof}
We assumed earlier that $b_0 \in \calm^{gen}_{\kappa}$. If $\gamma$ is a loop in $\cald_{\lambda}$ based at $b_0$, then it can be $C^{0}$-approximated by a based loop lying 
entirely in $\calm^{gen}_{\kappa}$. But 
$\calm^{gen}_{\kappa} \subset \calm_{\kappa,\lambda}$, which proves the lemma. \qed
\end{proof}
\medskip%

Let $\gamma$ be a based loop in $\cald_{\lambda}$. Since $\calm_{\kappa}$ is contractible, we may find a nullhomotopy $g \colon D \to \calm_{\kappa}$ of $\gamma$. It follows from 
Lemma \ref{finite-h-delta}, $g(D)$ intersects only finitely many varieties $H_{\delta}$. Perturbing 
$g$ if needed, we may assume that it is transverse to each $H_{\delta}$; we set:
\[
\ell_{\delta}(\gamma) = \text{Cardinality}\left(
g(D) \cap H_{\delta}  \right)\,\text{mod}\,2.
\]
It is clear that $\ell_{\delta}(\gamma)$ does not depend on a particular choice of $\gamma$ within a (free) homotopy class. It follows that for each $\delta \in \Delta_{\kappa,\lambda}$, the map 
$\gamma \to \ell_{\delta}(\gamma)$ gives a group homomorphism $\pi_1(\cald_{\lambda}) \to \zz_2$. Summing over all $\delta \in \Delta_{\kappa,\lambda}$, this gives a group homomorphism: 
\[
\ell = \oplus_{\delta \in \Delta_{\kappa,\lambda}} \ell_{\delta} \colon \pi_1(\cald_{\lambda}) \to 
\bigoplus_{\delta \in \Delta_{\kappa,\lambda}} \zz_2.
\]
For each $\delta_0 \in \Delta_{\kappa,\lambda}$, we choose a based loop $\gamma_{\delta_0}$ satisfying the following condition: There exists a nullhomotopy $g \colon D \to \calm_{\kappa}$ such that $g$ is transverse to $H_{\delta_0}$, $g(D) \cap H_{\delta_0}$ consists of a single point, and $g(D) \cap H_{\delta}$ is empty for all 
$\delta \in \Delta_{\kappa,\lambda} - \left\{ \delta_0 \right\}$.   
\begin{lemma}\label{l:fund-group}
$\pi_1(\cald_{\lambda},b_0)$ is normally-generated by the set 
$\left\{ \gamma_{\delta} \right\}_{\delta \in \Delta_{\kappa,\lambda}}$.
\end{lemma}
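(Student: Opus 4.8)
The plan is to recognize the statement as an instance of the classical description of the fundamental group of the complement of a locally finite arrangement of complex hypersurfaces inside a simply connected complex manifold: such a group is normally generated by one meridian loop per hypersurface, and by construction each $\gamma_{\delta}$ is such a meridian for $H_{\delta}$. Concretely, I would proceed in two stages.

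\textbf{Stage 1: meridians normally generate.} Let $\gamma$ be a loop in $\cald_{\lambda}$ based at $b_0$. Since $\calm_{\kappa}$ is contractible, $\gamma$ extends to a map $g\colon D\to\calm_{\kappa}$ with $g|_{\partial D}=\gamma$. By Lemma~\ref{finite-h-delta}, $g(D)$ meets only finitely many of the $H_{\delta}$, say $H_{\delta_1},\dots,H_{\delta_k}$. Each $H_{\delta}$ is a complex-analytic subset of $\calm_{\kappa}$ of complex codimension one, so the union of the singular loci of $H_{\delta_1},\dots,H_{\delta_k}$ together with their pairwise intersections is a complex-analytic subset of complex codimension at least two, hence of real codimension at least four in the real six-manifold $\calm_{\kappa}$. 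A generic perturbation of $g$ relative to $\partial D$ therefore makes $g$ transverse to each $H_{\delta_i}$ and disjoint from all of these lower-dimensional strata, so $g^{-1}\!\big(\bigcup_i H_{\delta_i}\big)$ is a finite set of interior points $q_1,\dots,q_m\in D$, each $q_j$ mapped transversally onto a smooth point lying on exactly one $H_{\delta_{i(j)}}$. Now $\pi_1\big(D\smallsetminus\{q_1,\dots,q_m\}\big)$ is free on small loops $c_1,\dots,c_m$ encircling the punctures, and the boundary circle is a product of conjugates of the $c_j$; pushing forward by $g$ shows that $\gamma$ is, in $\pi_1(\cald_{\lambda},b_0)$, a product of conjugates of the loops $g(c_j)$, each of which is a meridian of the corresponding $H_{\delta_{i(j)}}$. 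Hence $\pi_1(\cald_{\lambda},b_0)$ is normally generated by the collection of all meridians of all the $H_{\delta}$.

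\textbf{Stage 2: all meridians of a fixed $H_{\delta_0}$ are conjugate to $\gamma_{\delta_0}^{\pm1}$.} Here I would use that $H_{\delta_0}^{\mathrm{reg}}$, the locus of points of $H_{\delta_0}$ near which the arrangement $\bigcup_{\delta}H_{\delta}$ is just the smooth hypersurface $H_{\delta_0}$, is connected: it is $H_{\delta_0}$ minus a countable union of nowhere-dense complex-analytic subsets, and $H_{\delta_0}$ itself is connected (it is the slice of the period domain $\calm_{\kappa}$ cut out by $\langle\delta_0,\phi\rangle=0$, lying in the subspace $\kappa^{\perp}\cap\delta_0^{\perp}$ which carries a nondegenerate form of signature $(2,2)$, so exactly one of the two components of the corresponding type-IV domain lies in the chosen component $\calm_{\kappa}$). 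Given two meridians of $H_{\delta_0}$, based via connecting arcs at two points of $H_{\delta_0}^{\mathrm{reg}}$, I would join those points by a path in $H_{\delta_0}^{\mathrm{reg}}$ and slide one meridian to the other inside a thin tubular neighbourhood of that path; by local finiteness (Lemma~\ref{finite-h-delta} again) the tube may be taken disjoint from every other $H_{\delta}$, so the homotopy runs entirely in $\cald_{\lambda}$ and the two meridians are freely homotopic, hence conjugate in $\pi_1(\cald_{\lambda},b_0)$. Since $\gamma_{\delta_0}$ was defined precisely as a meridian of $H_{\delta_0}$ — its nullhomotopy $g$ is transverse to $H_{\delta_0}$ and meets the whole arrangement in a single point of $H_{\delta_0}$ — every meridian of $H_{\delta_0}$ is conjugate to $\gamma_{\delta_0}^{\pm1}$ (and normal closures are closed under inverses). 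Combining the two stages yields the lemma.

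The step I expect to be the genuine obstacle is the connectedness of $H_{\delta_0}^{\mathrm{reg}}$ in Stage 2, i.e.\ the assertion that a meridian of $H_{\delta_0}$ is well defined up to conjugacy; this is where the signature-$(2,2)$ computation and the observation that complex conjugation exchanges the two components of the type-IV domain (and the two components of $\calm_{\kappa}$) have to be made precise. The transversality and general-position bookkeeping in Stage 1, and the punctured-disk presentation, are routine by comparison.
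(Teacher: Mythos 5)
Your proof is correct and is essentially the paper's argument: the paper proves this lemma only by reference to the analogous Lemma 7 of \cite{Sm-2}, and that argument is exactly the standard meridian decomposition you give — a nullhomotopy in the contractible $\calm_{\kappa}$ made transverse to the locally finite family $\{H_{\delta}\}$ expresses any loop as a product of conjugates of meridians, and connectedness of the smooth locus of each $H_{\delta}$ inside the chosen component makes the meridian of $H_{\delta}$ well defined up to conjugacy and inversion, hence conjugate to $\gamma_{\delta}^{\pm 1}$. Your signature-$(2,2)$ computation for $\kappa^{\perp}\cap\delta^{\perp}$ and the resulting identification of $H_{\delta}\cap\calm_{\kappa}$ with one component of a type-IV domain correctly settles the one genuinely non-routine point.
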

\begin{proof}
The proof is analogous to that of Lemma 7 in \cite{Sm-2}. \qed
\end{proof}
\medskip%

To prove Theorem \ref{t:A}, we show that the following diagram is commutative:
\[
\begin{tikzcd}
\pi_1(\calm_{\kappa,\lambda}, b_0) \arrow{d}[swap]{\text{surjective}} \arrow{r}{\eqref{moses}} & K(\tilde{X}_{b_0}, \tilde{\Omega}_{b_0}) \arrow{d}{q}\\
\pi_1(\cald_{\lambda}, b_0) \arrow{r}{\ell} & \bigoplus_{\delta \in \Delta_{\kappa,\lambda}}\zz_2\,.
\end{tikzcd}
\]
To begin with, observe that we may assume 
that each $\gamma_{\delta}$ lies entirely in $\calm_{\kappa,\lambda}$; this follows from Lemma \ref{surjective}. Fix $\delta_0 \in \Delta_{\kappa,\lambda}$. Let us view $\gamma_{\delta_0}$ as a loop of symplectic forms on $\tilde{X}_{b_0}$. Then it suffices to show that 
\[
q\left(\gamma_{\delta_0}\right) = 
\begin{cases}
      1 & \text{for $\delta = \delta_0$,} \\
      0 & \text{for all $\delta = \Delta_{\kappa,\lambda} - \left\{ \delta_0 \right\}$.}
\end{cases}
\]
Let us view $q$ as map from the free loop space of $\calm_{\kappa,\lambda}$ into $\zz_2$.
\begin{lemma}\label{free}
Let $\gamma$ and $\gamma'$ be loops in $\calm_{\kappa,\lambda}$. Suppose that they are homotopic in $\cald_{\lambda}$ as free loops; then $q(\gamma) = q(\gamma')$.
\end{lemma}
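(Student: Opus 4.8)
The plan is to argue directly with the Seiberg--Witten counts that define $q$, since a free homotopy inside $\cald_{\lambda}$ cannot be fed into the Moser monodromy: the class $\kappa-\lambda\,e$ need not be K\"ahler over $\cald_{\lambda}\setminus\calm_{\kappa,\lambda}$. Recall that $q=\bigoplus_{\delta}q_{\delta-e}$ with $q_{\delta-e}=Q^{+}_{\delta-e}+Q^{-}_{\delta-e}$, where $Q^{+}_{\delta-e}=Q_{\delta-e}$ is a Kronheimer count and $Q^{-}_{\delta-e}=Q_{2e-\delta}\circ\alpha_{\mu*}$ is such a count precomposed with McDuff's deflation. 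For a loop in $\calm_{\kappa,\lambda}$, such a count is obtained from the loop of fibrewise K\"ahler forms $\{\omega_{\gamma(t)}\}$ on $\tilde X_{\gamma(t)}$ (in class $\kappa-\lambda\,e$) by filling $\gamma$ with a disk $g_{\gamma}\colon D\to\calm_{\kappa}$ (possible, $\calm_{\kappa}$ being contractible), equipping the family $\tilde\calx|_{D}$ with fibrewise metrics and an admissible self-dual $\eta_{b}$, and counting mod $2$ the solutions of \eqref{eq:sw} for $\fr s_{\delta-e}$; by the well-definedness of Kronheimer's homomorphism in the $b^{+}=3$ setting (\S\ref{K-map}, via the family invariants of \S\ref{sw-family}) this count is independent of the filling disk and of the admissible $\eta_{b}$, provided \eqref{admin} holds and Theorem~\ref{thm:taubes} is available on $\partial D$. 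So it suffices to prove $Q^{\pm}_{\delta-e}(\gamma)=Q^{\pm}_{\delta-e}(\gamma')$ for each $\delta\in\Delta_{\kappa,\lambda}$.

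Take first $Q^{+}_{\delta-e}$. Let $H\colon S^{1}\times[0,1]\to\cald_{\lambda}$ be the given free homotopy from $\gamma$ to $\gamma'$, and let $g_{\gamma}\colon D\to\calm_{\kappa}$ fill $\gamma$ together with some admissible data computing $Q_{\delta-e}(\gamma)$. Gluing $H$ onto $g_{\gamma}$ along $\gamma$ yields a disk $g_{\gamma'}=g_{\gamma}\cup_{\gamma}H$ filling $\gamma'$, whose interior contains a copy of the cylinder $H$. I would compute $Q_{\delta-e}(\gamma')$ with this disk, keeping the old data over the $g_{\gamma}$-part and choosing over the $H$-part a family of metrics $g_{b}$ and self-dual forms $\eta_{b}$ on $\tilde X_{b}$, $b\in H$, that match the boundary data along $\gamma$ and $\gamma'$ and satisfy \eqref{admin}. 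Granting (next paragraph) that these can be arranged so that \eqref{eq:sw} for $\fr s_{\delta-e}$ has no solution over any fibre $\tilde X_{b}$ with $b\in H$, the family moduli space over $g_{\gamma'}$ is supported over the $g_{\gamma}$-subdisk --- there are moreover no solutions along the gluing circle $\gamma$ itself, by Theorem~\ref{thm:taubes} applied to $\omega_{\gamma(t)}$, as $\langle\delta-e,\kappa-\lambda\,e\rangle=\langle\delta,\kappa\rangle-\lambda\le 0$ --- and coincides there with the moduli space computing $Q_{\delta-e}(\gamma)$. Hence $Q_{\delta-e}(\gamma')=Q_{\delta-e}(\gamma)$.

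To produce the data over $H$: each $\tilde X_{b}$, a blowup of a K\"ahler surface, carries K\"ahler metrics, and the bundle over $H$ with fibre the (convex, hence contractible) K\"ahler cone $\calc(\tilde X_{b})$, together with the contractible space of K\"ahler forms in a fixed class, lets one extend the boundary K\"ahler forms $\omega_{\gamma(t)},\omega_{\gamma'(t)}$ to a family $\omega_{b}^{K}$ of K\"ahler forms over $H$ by obstruction theory. Put $g_{b}=\omega_{b}^{K}(\,\cdot\,,J_{b}\,\cdot\,)$ and $\eta_{b}=-iF^{+}_{{A_0}_b}-\rho\,\omega_{b}^{K}$ with $\rho>0$ large, uniformly over the compact $H$ and at least the value used on $g_{\gamma}$; then the boundary data match and reducibles are excluded by Lemma~\ref{l:no-reducibles}, since $\langle[\omega_{b}^{K}],[\omega_{b}^{K}]\rangle>0$. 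Now Theorem~\ref{t:kahler} applies fibrewise: for $\rho$ large the solutions for $\fr s_{\delta-e}$ on $\tilde X_{b}$ correspond to effective divisors in the class $\delta-e$, of which there are none, because $\delta-e$ is not of type $(1,1)$ on $\tilde X_{b}$ --- $e$ is a $(1,1)$-class while $\langle\delta,\phi_{b}\rangle\neq 0$ for $b\in\cald_{\lambda}$, i.e.\ $b\notin H_{\delta}$. This is exactly where the definition of $\cald_{\lambda}$ enters.

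Finally, for $Q^{-}_{\delta-e}=Q_{2e-\delta}\circ\alpha_{\mu*}$: under the homotopy equivalences of Theorem~\ref{mcduff-alpha}, the loop $\alpha_{\mu}(\{\omega_{\gamma(t)}\})$ is freely homotopic in $S_{\kappa,\lambda-\mu}$ to the loop of K\"ahler forms on $\tilde X_{\gamma(t)}$ in the class $\kappa-(\lambda-\mu)\,e$ (which lies in $\calc(\tilde X_{\gamma(t)})$ by convexity, as it contains $\kappa-\lambda\,e$ and $\kappa-\epsilon\,e$ for small $\epsilon$), and likewise for $\gamma'$; so $Q_{2e-\delta}$ of these loops is computed complex-geometrically, and the disk-plus-cylinder argument above applies verbatim, using now that $2e-\delta$ is not of type $(1,1)$ on $\tilde X_{b}$ for $b\in\cald_{\lambda}$ (again $\langle\delta,\phi_{b}\rangle\neq 0$) to see that the SW moduli space for $\fr s_{2e-\delta}$ over $H$ is empty. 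Combining, $q_{\delta-e}(\gamma)=q_{\delta-e}(\gamma')$ for every $\delta\in\Delta_{\kappa,\lambda}$, hence $q(\gamma)=q(\gamma')$. The main obstacle is the bookkeeping of the third paragraph: building over the homotopy cylinder $H$ a family of metrics and Taubes perturbations that simultaneously restricts correctly on $\partial H$, satisfies \eqref{admin} throughout, and --- via Theorem~\ref{t:kahler} and the failure of $\delta-e$ (resp.\ $2e-\delta$) to be $(1,1)$ over $\cald_{\lambda}$ --- supports no Seiberg--Witten solutions, together with checking that the glued disk $g_{\gamma'}$ is an admissible filling in the precise sense required for Kronheimer's count.
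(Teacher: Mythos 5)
Your proof is correct and follows essentially the same route as the paper's: both fill the free homotopy cylinder in $\cald_{\lambda}$ with a family of fibrewise K\"ahler forms and admissible Taubes perturbations, kill all Seiberg--Witten solutions over the cylinder via Theorem~\ref{t:kahler} because $\delta-e$ (resp.\ $2e-\delta$) fails to be of type $(1,1)$ away from $H_{\delta}$, and handle $Q^{-}_{\delta-e}$ by deflating to the class $\kappa-(\lambda-\mu)\,e$. The paper only makes your obstruction-theoretic extension concrete by constraining the K\"ahler classes to the form $\kappa-h(b)\,e$ with $h\le\lambda$ (resp.\ $h\le\lambda-\mu$), which is also what makes the verification of \eqref{admin} immediate.
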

\begin{proof}
Let us indeed assume that $\gamma$ and $\gamma'$ co-bound an annulus $A$ in $\cald_{\lambda}$. 
For each $b \in A$, let us choose a family of K{\"a}hler $\Omega_b \in H^{1,1}(\tilde{X}_b;\rr)$ that agrees with $\omega_b$ on the boundary $\del A = \gamma \cup \gamma'$, and satisfies: 
\[
[\Omega_b] = \kappa - h\,e \in H^2(X_b;\rr)\quad \text{for some function $h = h(b)$.}
\]
We may choose $h$ so that $h(b) \leq \lambda$ for all $b \in A$. 
Since $\Omega_b$ and $\omega_b$ agree on $\del A$, we have $h(b) = \lambda$ for all $b \in \del A$. 
\smallskip%

Let $g_b$ be the family of 
fiberwise Hermitian metrics on $\tilde{X}_b$ associated to $\Omega_b$. For each $\varepsilon \in E$, let $\fr{s}_{\varepsilon}$ be the spin$^{\cc}$ 
structure on $T_{\tilde{\calx}/A}$ which, when restricted 
to $\tilde{X}_b$, satisfies $c_1(\fr{s_{\delta}}) = c_1(X_b) + 2\, \varepsilon$. As usual, we set:
\[
\eta_{b} = -i F_{{A_0}_t}^{+} - \rho\, \Omega_b.
\]
For each $b \in A$, we have that
\[
-\rho \langle [\Omega_b], \kappa - \lambda\,e \rangle = 
-\rho ( \langle \kappa, \kappa \rangle - \lambda\, h) < 0\quad\text{for all $\rho > 0$,}
\]
so that \eqref{admin} holds for $\rho > 0$ large enough. 
\smallskip%

Put $\varepsilon = \delta - e$ for some $\delta \in \Delta_{\kappa, \lambda}$. Consider the Seiberg-Witten equations parametrized by the family 
$(g_b,\eta_b)$, $b \in A$ and the spin$^{\cc}$ structure $\fr{s}_{\delta - e}$. Observe that for each $b \in A$, these equations 
have no solutions. Indeed, if $b \in \cald_{\lambda}$, then $\delta - e \in H^2(\tilde{X}_b;\zz)$ is not a $(1,1)$-class, so Theorem \ref{t:kahler} applies. It follows then that
\[
Q^{+}_{\delta - e}(\gamma) = Q^{+}_{\delta - e}(\gamma').
\]
Put $\varepsilon = 2\,e - \delta$ for some $\delta \in \Delta_{\kappa, \lambda}$. Choose $\mu > 0$ so that \eqref{mu-close-eps} holds; i.e., so that to satisfy
\[
\langle 2\,e - \delta, \kappa - (\lambda - \mu)\,e \rangle \leq 0.
\]
Deform $\Omega_b$ by making $h$ so that $h \leq \lambda - \mu$ 
for all $b \in A$. It follows then that for each $b \in \del A$,
\[
\Omega_b = \alpha_{\mu}(\omega_b),
\]
where $\alpha_{\mu}$ is as in Theorem \ref{mcduff-alpha}. 
Let again $(g_b,\eta_b)$ be the 
parameters defined for the (deformed) family $\Omega_b$. For each $b \in A$, the Seiberg-Witten equations for $(g_b,\eta_b)$ and $\fr{s}_{2\,e - \delta}$ have no solutions. This again follows upon 
applying Theorem \ref{t:kahler}. But then we have:
\[
Q^{-}_{\delta - e}(\gamma) = Q^{-}_{\delta - e}(\gamma'),
\]
and hence 
$q_{\delta - e}(\gamma) = q_{\delta - e}(\gamma')$. This completes the proof. \qed
\end{proof}
\medskip%

We apply Lemma \ref{free} as follows. Let us view $\gamma_{\delta_0}$ as a free loop in $\cald_{\lambda}$. Make a homotopy of $\gamma_{\delta_0}$ into a loop so small that it becomes a boundary of a holomorphic disc $D$ transverse to $H_{\delta_0}$. Perturbing $D$ if needed, we may assume that it intersects $H_{\delta_0}$ at a good point; that is, letting 
$b_0$ be that point, we should have $\langle \delta, \phi_{b_0} \rangle \neq 0$ for all $\delta \in \Delta_{\kappa,\lambda} - \left\{ \delta_0 \right\}$. By Lemma \ref{finite-h-delta}, $D$ can be chosen 
small enough so that:
\[
D \cap H_{\delta_{0}} = \left\{ \phi_{b_0} \right\}\quad 
\text{and}\quad 
D \cap H_{\delta} = \emptyset\ \text{for each $\delta \in \Delta_{\kappa,\lambda} - \left\{ \delta_0 \right\}$.}
\]
Finally, may assume that 
$\gamma_{\delta_0} \subset \calm_{\kappa,\lambda}$. Indeed, as in Lemma \ref{surjective}, we can choose a $C^{0}$-perturbation $\gamma'_{\delta_0}$ of $\gamma_{\delta_0}$ such that 
$\gamma'_{\delta_0} \subset D$, and such that $\gamma'_{\delta_0} \subset \calm_{\kappa,\lambda}$, Shrinking $D$, we can assume that $\del D = \gamma'_{\delta_0}$. 
\medskip%

The rest of the proof follows \cite[\S\,8]{Sm-2}; we repeat the key steps here for the sake of completeness. Let again $\Omega_b$, $b \in D$ be a family of K{\"a}hler form on $\tilde{X}_b$ that agrees with $\omega_b$ on $\del D$, and let $(g_b,\eta_b)$ be the associated family of metrics and perturbations. 
\begin{lemma}
$q_{\delta - e}(\gamma_{\delta_0}) = 0$ for 
each $\delta \in \Delta_{\kappa,\lambda} - \left\{ \delta_0 \right\}$.
\end{lemma}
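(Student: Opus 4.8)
The plan is to show that, for every $\delta \in \Delta_{\kappa,\lambda} - \{\delta_0\}$, the \emph{entire} family Seiberg--Witten moduli space over the disc $D$ is empty, for both spin$^{\cc}$ structures that enter the definition of $q_{\delta-e} = Q^{+}_{\delta-e} + Q^{-}_{\delta-e}$. Once this is established, both summands vanish by definition.

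First I would extract the consequence of the geometric properties of $D$. Since $D \subset \calm_{\kappa}$ and $D \cap H_{\delta} = \emptyset$, and since a point $b \in \calm_{\kappa}$ lies on $H_{\delta}$ exactly when $\langle \delta, \phi_b \rangle = 0$, we get $\langle \delta, \phi_b \rangle \neq 0$ for all $b \in D$. As $e$ (hence also $2\,e$) is of type $(1,1)$ on every fiber $\tilde{X}_b$, this says that neither $\delta - e$ nor $2\,e - \delta$ lies in $H^{1,1}(\tilde{X}_b;\rr)$ for any $b \in D$.

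Next I would apply Theorem \ref{t:kahler} fiberwise. For each $b \in D$ the form $\Omega_b$ (and, in the variant needed for $Q^{-}$, its deformation with $h \le \lambda - \mu$) is K{\"a}hler on $\tilde{X}_b$, so equations \eqref{eq:sw-eq-sympl} have \emph{no solution whatsoever} when the class $\varepsilon$ is not of type $(1,1)$. Applying this with $\varepsilon = \delta - e$ and with $\varepsilon = 2\,e - \delta$, and checking that the admissibility condition \eqref{admin} holds for $\rho$ large (here $[\Omega_b] = \kappa - h\,e$ with $h \le \lambda$, so $-\rho\langle [\Omega_b],\kappa - \lambda e\rangle = -\rho(\langle\kappa,\kappa\rangle - \lambda h) < 0$, and the same inequality persists after the deformation since $h \le \lambda - \mu < \sqrt{\langle\kappa,\kappa\rangle}$), we conclude that the family moduli spaces $\mathfrak{M}^{\fr{s}_{\delta-e}}_{(g_b,\eta_b)}$ and $\mathfrak{M}^{\fr{s}_{2e-\delta}}_{(g'_b,\eta'_b)}$ over $D$ are both empty. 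Then I would read off the two homomorphisms: the family over $D$ built from $\Omega_b$ agrees on $\partial D = \gamma_{\delta_0}$ with the data from \eqref{eta-t-gamma}, so $Q^{+}_{\delta-e}(\gamma_{\delta_0})$ is the mod-$2$ cardinality of the (empty) space $\mathfrak{M}^{\fr{s}_{\delta-e}}_{(g_b,\eta_b)}$, hence $0$; after deforming so that $h \le \lambda - \mu$ the boundary family becomes $\alpha_{\mu}(\omega_b)$, so $Q^{-}_{\delta-e}(\gamma_{\delta_0}) = Q_{2e-\delta}(\alpha_{\mu}\gamma_{\delta_0})$ is the mod-$2$ cardinality of the (empty) space $\mathfrak{M}^{\fr{s}_{2e-\delta}}_{(g'_b,\eta'_b)}$, hence $0$. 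Therefore $q_{\delta-e}(\gamma_{\delta_0}) = 0$.

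The main obstacle is really only bookkeeping: keeping track of which spin$^{\cc}$ structure ($\fr{s}_{\delta-e}$ or $\fr{s}_{2e-\delta}$) governs each summand and verifying that its defining class fails to be $(1,1)$ \emph{everywhere} on $D$ --- which is precisely what $D \cap H_{\delta} = \emptyset$ buys us. Once that is in place, Theorem \ref{t:kahler} annihilates every solution and no transversality or compactness analysis is needed; one only has to confirm, as above, that \eqref{admin} survives the deformation $h \le \lambda - \mu$ so that $Q^{-}_{\delta-e}$ is legitimately computed by this family.
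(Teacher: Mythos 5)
Your proposal is correct and follows essentially the same route as the paper: the paper likewise observes that $D \cap H_{\delta} = \emptyset$ forces $\delta$ (hence $\delta - e$ and $2\,e - \delta$) to fail to be of type $(1,1)$ on every fiber over $D$, applies Theorem \ref{t:kahler} to empty out both family moduli spaces, and delegates the admissibility and $h \le \lambda - \mu$ deformation bookkeeping to the computation already carried out in Lemma \ref{free}. You have simply written out explicitly the details that the paper cites by reference.
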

\begin{proof}
If $\delta \neq \delta_0$, then 
$\delta \centernot\in H^{1,1}(\tilde{X}_b;\rr)$ for all $b \in D$, and we may again apply 
Theorem \ref{t:kahler}. The proof proceeds with the calculation done in Lemma \ref{free}; although in \ref{free} we worked on the annulus $A$, the calculation can be extended to arbitrary compact subset of $\calm_{\kappa}$. \qed  
\end{proof}
\begin{lemma}
$q_{\delta_0 - e}(\gamma_{\delta_0}) = 1$.
\end{lemma}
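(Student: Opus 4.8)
The plan is to use $q_{\delta_0-e}=Q^{+}_{\delta_0-e}+Q^{-}_{\delta_0-e}$ and show that $Q^{+}_{\delta_0-e}(\gamma_{\delta_0})=1$ while $Q^{-}_{\delta_0-e}(\gamma_{\delta_0})=0$. The vanishing of $Q^{-}_{\delta_0-e}(\gamma_{\delta_0})=Q_{2e-\delta_0}(\alpha_{\mu*}\gamma_{\delta_0})$ is exactly the computation from the proof of Lemma~\ref{free}, now applied to the nullhomotopy $D$ of $\gamma_{\delta_0}$: fill $D$ with K\"ahler forms $\Omega_b$ of class $\kappa-h(b)\,e$, where $h\le\lambda-\mu$ on $D$ and $h\equiv\lambda-\mu$ on $\del D$, with $\mu$ chosen as in \eqref{mu-close-eps}. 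Then $2e-\delta_0$ fails to be of type $(1,1)$ on $\tilde{X}_b$ for $b\neq b_0$, while at $b_0$ one has $\langle 2e-\delta_0,[\Omega_{b_0}]\rangle=2h(b_0)-\langle\kappa,\delta_0\rangle\le 2(\lambda-\mu)-\langle\kappa,\delta_0\rangle\le 0$ by \eqref{mu-close-eps}, so $2e-\delta_0$ is not effective on $\tilde{X}_{b_0}$ either; by Theorem~\ref{t:kahler} the equations \eqref{eq:sw-eq-sympl} have no solutions anywhere over $D$, hence $Q^{-}_{\delta_0-e}(\gamma_{\delta_0})=0$.

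The heart of the matter is $Q^{+}_{\delta_0-e}(\gamma_{\delta_0})=1$. Unwinding Kronheimer's construction, this number is $\fsw_{(g_b,\eta_b)}(\fr{s}_{\delta_0-e})$ for the family over $D$ carrying K\"ahler forms $\Omega_b$ of class $\kappa-h(b)\,e$ with $h\le\lambda$ and $h\equiv\lambda$ on $\del D$, $g_b$ the associated metric, and $\eta_b$ Taubes' perturbation \eqref{eta-t-gamma}. This family moduli space has expected dimension $d(\fr{s}_{\delta_0-e},D)=0$, contains no reducibles by \eqref{admin}, and is empty over $\del D$ by Theorem~\ref{thm:taubes}, since $\langle\kappa-\lambda e,\delta_0-e\rangle=\langle\kappa,\delta_0\rangle-\lambda\le 0$ and $\delta_0-e\neq0$. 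For $b\in D$ with $b\neq b_0$ one has $\langle\phi_b,\delta_0\rangle\neq0$, so $\delta_0-e$ is not of type $(1,1)$ on $\tilde{X}_b$ and Theorem~\ref{t:kahler} forbids solutions there; hence, after a generic perturbation of $\{\eta_b\}$ supported away from $\del D$ (relative Sard--Smale), the moduli space is a finite set of points concentrated in an arbitrarily small neighbourhood of $b_0$, and it remains to count them modulo $2$.

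At $b_0$ the class $\delta_0-e$ is of type $(1,1)$. As in the proof of Lemma~\ref{-1-torus}, $\delta_0$ is the fibre class of a holomorphic fibration $X_{b_0}\to Y$ over an elliptic curve, all of whose fibres are smooth elliptic curves, and the proper transform of the fibre through $X_{b_0}\cap Z$ is the \emph{unique} effective divisor $D_0\subset\tilde{X}_{b_0}$ in the class $\delta_0-e$ --- a smooth genus-one curve with $D_0^2=-1$. By Theorem~\ref{t:kahler}, $D_0$ gives the only gauge class of solution on the fibre $\tilde{X}_{b_0}$, and it is cut out there with a $2$-real-dimensional obstruction space --- matching the gap $0-d(\fr{s}_{\delta_0-e},\pt)=2$ between the actual and expected fibrewise dimensions, and identifiable, via the K\"ahler analysis of \cite[Ch.\,7]{Morg}, with $H^{1}(N_{D_0/\tilde{X}_{b_0}})\cong\cc$ (the normal bundle has degree $-1$ on the elliptic curve $D_0$). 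So over $D$ the moduli space near $b_0$ is the zero set of a Kuranishi map $U\to\cc$ on a small holomorphic disc $U\ni b_0$.

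The crux --- and the step I expect to be the main obstacle --- is to verify that this Kuranishi map vanishes to first order at $b_0$. Its differential there is the Kodaira--Spencer map of $\tilde{\calx}\to D$ followed by the restriction $H^{1}(T_{\tilde{X}_{b_0}})\to H^{1}(N_{D_0/\tilde{X}_{b_0}})$, which up to a nonzero scalar is the derivative at $b_0$ of $b\mapsto\langle\delta_0,\phi_b\rangle$ --- the rate at which $\delta_0-e$ acquires a $(0,2)$-part as $b$ leaves the wall. Since $D$ was chosen to be a holomorphic disc transverse to $H_{\delta_0}=\{\langle\delta_0,\phi\rangle=0\}\cap\calm_\kappa$, meeting it only at $b_0$, this derivative is nonzero; being complex-linear it is then an isomorphism (equivalently, a holomorphic disc meets the complex hypersurface $H_{\delta_0}$ with local intersection number $1$). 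Hence $b_0$ is a transverse zero of the Kuranishi map, $Q^{+}_{\delta_0-e}(\gamma_{\delta_0})=1$, and therefore $q_{\delta_0-e}(\gamma_{\delta_0})=1$. The bookkeeping identifying this holomorphic picture with the Seiberg--Witten local model, together with the verification that no further solutions appear over $D$, would be imported from \cite[\S\,8]{Sm-2}; this compatibility is the only genuinely delicate ingredient.
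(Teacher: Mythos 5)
Your proposal is correct and follows essentially the same route as the paper: emptiness of the moduli space away from $b_0$ via Theorem \ref{t:kahler}, uniqueness of the effective divisor in class $\delta_0-e$ (the elliptic $(-1)$-curve from Lemma \ref{-1-torus}), transversality of the single solution over $b_0$ identified with transversality of $D$ to the wall $H_{\delta_0}$ (your Kuranishi-map derivative is exactly the infinitesimal-variation-of-Hodge-structures criterion the paper imports from \cite[\S\,6]{Sm-1}), and vanishing of $Q^{-}_{\delta_0-e}$ after deflating below $\lambda-\mu$. The only cosmetic difference is that the paper invokes Theorem \ref{thm:taubes} for the $Q^{-}$ vanishing where you use Theorem \ref{t:kahler} plus positivity of effective divisors against K\"ahler classes.
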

\begin{proof}
If $b \in D - \left\{ b_0 \right\}$, then 
$\delta_0 - e \centernot\in H^{1,1}(\tilde{X}_b;\rr)$. On the other hand, we have 
$\delta_0 - e \in H^{1,1}(\tilde{X}_{b_0};\rr)$. Indeed, 
$\delta_0 - e \in H^{1,1}(\tilde{X}_b;\rr)$ if and only if 
$\delta_0 \in H^{1,1}(X_b;\rr)$. But 
$\delta_0 \in H^{1,1}(X_b;\rr)$ if and only if 
$\langle \delta_0, \phi_{b} \rangle = 0$. This is equivalent to $\phi_{b} \in D \cap H_{\delta_0}$. But $D$ is chosen in such a way that $D \cap H_{\delta_0} = \left\{ \phi_{b_0} \right\}$.
\smallskip%

We saw in Lemma \ref{-1-torus} that $\delta_0 - e$ is represented by a smooth irreducible elliptic curve of self-intersection $(-1)$. By positivity of intersections, no other holomorphic curve represeting $\delta_0 - e$ can exist. Thus, by Theorem \ref{t:kahler}, the moduli space 
$\mathfrak{M}^{ \fr{s}_{\delta_{0} - e} }_{(g_b,\eta_b)}$ consists of a single point, which lies over $b_0 \in D$. In \cite[\S\,6]{Sm-1}, a sufficient condition is found that guarantees that this point is tranversally cut out; namely, if consider the infinitesimal variation of Hodge structures (\cite{Griff}):
\[
\Omega_{*} \colon T_{D} \to \text{Hom}\,(H^{1,1}, H^{0,2}),\quad\text{where $H^{p,q} = H^{p,q}(\tilde{X}_{b_0};\cc)$.} 
\]
then $\mathfrak{M}^{ \fr{s}_{\delta_{0} - e} }_{g_b,\eta_b}$ is tranversally cut out, provided 
\[
\delta_0 \centernot\in \text{ker}\, \Omega_{*}(\del_t),\quad\text{where 
$\del_t$ is a generator for $T_{D}$.} 
\]
But this is precisely the condition that $D$ is transverse to $H_{\delta_0}$, a condition which is indeed met. It follows then that $Q^{+}_{\delta_0 - e}(\gamma_{\delta_0}) = 1$.  
\smallskip%

Let us deform $\Omega_b$ in exactly the same way as in Lemma \ref{free}. We then have:
\[
\langle [\Omega_{b}], 2\,e - \delta \rangle \leq 0.
\]
Then, using Theorem \ref{thm:taubes}, we get $Q^{-}_{\delta_0 - e}(\gamma_{\delta_0}) = 0$, and hence also $q_{\delta_{0} - e}(\gamma_{\delta_0}) = 1$. \qed
\end{proof}

\bibliographystyle{plain}
\bibliography{references}

\end{document}